\documentclass{article}

\usepackage{microtype}
\usepackage{graphicx}
\usepackage{subcaption}
\usepackage{booktabs} 

\usepackage{hyperref}

\usepackage[accepted]{icml2026}

\usepackage{amsmath}
\usepackage{amssymb}
\usepackage{mathtools}
\usepackage{amsthm}

\usepackage{tikz}
\usetikzlibrary{positioning, arrows.meta}

\usetikzlibrary{decorations.pathreplacing}

\usepackage[capitalize,noabbrev]{cleveref}

\theoremstyle{plain}
\newtheorem{theorem}{Theorem}[section]

\newtheorem{lemma}[theorem]{Lemma}

\theoremstyle{definition}
\newtheorem{definition}[theorem]{Definition}
\newtheorem{assumption}[theorem]{Assumption}
\theoremstyle{remark}
\newtheorem{remark}[theorem]{Remark}

\newcommand{\indep}{\perp\!\!\!\perp}
\newcommand{\e}[1]{\mathbb{E}\left[#1\right]  } 
\newcommand{\prob}[1]{\mathbb{P}\left(#1\right)} 
\usepackage{makecell}

\usepackage{cancel}
\usepackage{longtable}

\usepackage[textsize=tiny]{todonotes}

\icmltitlerunning{Semi-knockoffs: a model-agnostic Conditional Independence Testing method with finite-sample guarantees}

\begin{document}

\twocolumn[
  \icmltitle{Semi-knockoffs: a model-agnostic Conditional Independence Testing method with finite-sample guarantees}



  \icmlsetsymbol{equal}{*}

  \begin{icmlauthorlist}
    \icmlauthor{Angel Reyero Lobo}{yyy,comp}
    \icmlauthor{Bertrand Thirion}{comp}
    \icmlauthor{Pierre Neuvial}{yyy}
  \end{icmlauthorlist}

  \icmlaffiliation{yyy}{Univ Toulouse, INSA Toulouse, CNRS, IMT, Toulouse, France}
  \icmlaffiliation{comp}{Université Paris-Saclay, Inria, CEA, Palaiseau, 91120, France}

  \icmlcorrespondingauthor{Angel Reyero Lobo}{angel.reyero-lobo@inria.fr}

  \icmlkeywords{Conditional Independence Testing, Knockoffs}

  \vskip 0.3in
]



\printAffiliationsAndNotice{}  

\begin{abstract}

Conditional independence testing (CIT) is essential for reliable scientific discovery. It prevents spurious findings and enables controlled feature selection.
Recent CIT methods have used machine learning (ML) models as surrogates of the underlying distribution. However, model-agnostic approaches require a train-test split, which reduces statistical power. 
We introduce \emph{Semi-knockoffs}, a CIT method that can accommodate any pre-trained model, avoids this split, and provides valid p-values and false discovery rate (FDR) control for high-dimensional settings. 
Unlike methods that rely on the model-$X$ assumption (known input distribution), Semi-knockoffs only require conditional expectations for continuous variables. This makes the procedure less restrictive and more practical for machine learning integration. 
To ensure validity when estimating these expectations, we present two new theoretical results of independent interest: (i) stability for regularized models trained with a null feature and (ii) the double-robustness property.

\end{abstract}

\section{Introduction}
Scientific discovery involves identifying features relevant to a target. For example, given a set of genes, one may wish to find those important for a particular disease. In high-dimensional settings, some variables may appear important only because they are correlated with truly relevant ones. To address this, the problem is formalized as a conditional independence testing (CIT) task \cite{candes2018panning}, where a variable is important if it provides information about the target beyond what is contained in the other features. Under the standard assumption that no covariate is exactly a function of the others \cite{candes2018panning, LocoVSShapley}, this set of important variables is unique and known as the Markov blanket.



In recent years, machine learning (ML) models have significantly improved predictive performance across complex tasks, potentially providing useful insights for scientific discovery \cite{Ewald_2024}. However, their increased expressiveness often comes at the cost of interpretability, making it difficult to evaluate the relevance of individual features. Addressing this challenge is the focus of interpretable ML, particularly global variable importance \cite{covert2020understanding, williamson2023general, molnar2025}. This lack of transparency widens the gap between assigning predictive relevance in a model (variable importance) and identifying the Markov blanket (variable selection) \cite{reyerolobo2025principledapproachcomparingvariable}. Most variable importance methods are heuristic and abstract away from the underlying distribution, making rigorous statistical guarantees challenging and often asymptotic \cite{williamson2023general, LocoVSShapley}. In contrast, controlled variable selection methods can provide strong guarantees, such as finite-sample type-I error or false discovery rate (FDR) control, but they typically rely on transparent models (e.g., Lasso in knockoffs or CRT; \citet{candes2018panning}) or on linear correlation (e.g., GCM; \citet{shah_peters_CIT} or dCRT; \citet{liu2022fast}).

When procedures are based on complex models, quantifying the impact of a single feature becomes nontrivial, unlike in linear models where coefficients provide a natural measure of importance. Many approaches assess the drop in predictive performance when either the model is refitted without the feature \cite{lei2018, williamson2021} or by conditionally replacing it \cite{Strobl2008, chamma2024statistically}. Several methods based on the model-X framework \cite{candes2018panning} combine these ideas with valid type-I error in a model-agnostic manner \cite{Tansey02012022}. However, they typically rely on symmetry between the original and an artificial null distribution, requiring evaluation on an independent test set. This requires a train-test split, which can limit performance in small-sample regimes compared to simpler models \cite{liu2022fast}. Knockoffs are particularly popular in high-dimensional settings due to their computational efficiency and finite-sample FDR control. These procedures construct knockoff variables, define suitable statistics, and apply a data-dependent threshold. However, designing statistics based on pre-trained models remains an open challenge: existing attempts fail to fully comply with the knockoff framework or provide only asymptotic type-I error \cite{Watson2021}. Moreover, sampling knockoff variables is challenging and raises concerns regarding finite-sample validity \cite{blain2025knockoffsfaildiagnosingfixing}.


Our main contributions are summarized as follows:
\begin{itemize}
    \item We introduce \emph{Semi-knockoffs}, a method designed for controlled variable selection while accommodating arbitrary pre-trained ML models. It does not require a train-test split, nor the construction of exact knockoff variables, and it avoids the constraints imposed on knockoff statistics. The method provides finite-sample control of both type-I error and the false discovery rate.
    \item We study the control under estimated, rather than oracle samplers. First, we establish a \textit{stability} result for regularized learners, yielding distributional convergence of the Semi-knockoffs. Second, we provide a \textit{double-robustness} result, conjecturing the control as the model and sampler together achieve faster convergence.
    \item We demonstrate our approach through extensive numerical experiments, drawing on state-of-the-art methods from variable selection and importance.
\end{itemize}

 \paragraph{Notation:} Let $\{(x_i, y_i)\}_{i=1}^n$ be i.i.d. samples from the joint distribution $\mathcal{L}(X,Y)$, where $X \in \mathcal{X} \subset \mathbb{R}^p$ and $Y \in \mathcal{Y} \subset \mathbb{R}$. We denote by $\widehat{m} : \mathcal{X} \to \mathcal{Y}'$ a predictive model and by $l : \mathcal{Y}' \times \mathcal{Y} \to \mathbb{R}_+$ a loss function. For $j \in [p]$, we refer to the $j$-th coordinate of $X$ as the feature of interest, denoted by $X^j$, and write $X^{-j}$ for $X$ restricted to all but the $j$-th coordinate. We denote by $\mathcal{H}_0 \subseteq [p]$ the null set. For a sequence $\{a_n\}$, $O_{\mathbb{P}}(a_n)$ denotes boundedness in probability at rate $a_n$, while $o_{\mathbb{P}}(a_n)$ denotes convergence to zero in probability at that rate. Any additional notation is introduced as needed and summarized in Appendix~\ref{sect:glossary}.

\section{Related work}\label{sect:related_work}

We first introduce variable importance and then variable selection methods; full definitions are in Appendix~\ref{sect:LOCO_CFI_MX}.
  \subsection{Leave One Covariate Out (LOCO)}
A first model-agnostic approach consists of measuring the drop in predictive performance when the model is refitted without the coordinate of interest \cite{lei2018}. LOCO nonparametric efficiency has been carefully studied in \citet{williamson2021, williamson2023general}. However, standard $t$-tests are not applicable for conditional independence testing, as the variance of the resulting statistic vanishes under the null, invalidating the central limit theorem. To address this, \citet{williamson2023general} propose performing the refitting on an independent dataset, which corrects the variance and enables asymptotic type-I error control. In practice, however, this approach incurs substantial power loss \cite{reyerolobo2025conditionalfeatureimportancerevisited}. Alternative solutions instead artificially inflate the variance by adding a correction term of order $O_{\mathbb{P}}(n^{-1/2})$ and then apply Chebyshev's inequality for asymptotic type-I error control \cite{LocoVSShapley}.
 \subsection{Conditional Feature Importance (CFI)}
Since refitting can be costly, early heuristics aimed to remove the information carried by a feature without changing the model. These methods evaluate performance on test data in which the feature’s information is disrupted. A classical strategy is to break the dependence marginally: predictions are made on observations where all coordinates are preserved except the $j$-th, which is permuted. This yields the MDA \cite{Breiman2001} or PFI \cite{Mi2021}.


Despite its simplicity, PFI has extrapolation issues because it forces the model to make predictions in low-density regions of the input space \cite{Hooker2019PleaseSP}. To mitigate this issue, several studies have proposed \emph{conditional} permutation, which preserves the dependence structure with the remaining inputs while breaking the association with the target variable \cite{Strobl2008}. This approach has demonstrated robust empirical performance \cite{chamma2024statistically} and possesses a double robustness property \cite{reyerolobo2025conditionalfeatureimportancerevisited}, implying that a good conditional sampler or model is sufficient to detect null features.


Nonetheless, providing statistical guarantees remains difficult. \citet{reyerolobo2025conditionalfeatureimportancerevisited} show that standard $t$-tests are invalid due to vanishing variance (as in LOCO). To address this, they either inflate the variance by an $O(n^{-1/2})$ term (or $O(n^{-a})$ for $a<2$ in the linear case) and apply Markov's inequality for asymptotic type-I control, or use a Wilcoxon test. However, this requires an independent dataset to avoid overfitting, enforcing a train-test split.

%
%
\subsection{Model-X variable selection methods}
 \citet{candes2018panning} introduced the model-X framework, which assumes that the input distribution is known (or can be easily estimated), enabling conditional sampling. This is reasonable when the covariates can be experimentally controlled (e.g., clinical trials), when modeling the input is easier than modeling the input-output association (e.g., genomics), or when abundant unlabeled data are available.
\subsubsection{Conditional Randomization Test (CRT)}
The CRT computes a test statistic $T(X^j, X^{-j}, y)$ that measures the importance of feature $j$—for example, a Lasso coefficient—and compares it to its values obtained on conditionally resampled datasets. Under the model-X assumption, one can sample $\widetilde{X}^{j} \sim \mathcal{L}(X^j \mid X^{-j})$. Under the null, the pairs $T(X^j, X^{-j}, y)$and $T(\widetilde{X}^j, X^{-j}, y)$
are exchangeable, which allows an exact finite-sample type-I error control.

The original CRT of \citet{candes2018panning} is computationally prohibitive, as it requires numerous refittings of the predictive model. To alleviate this, \citet{Tansey02012022} proposed the Holdout Randomization Test (HRT), a model-agnostic approach that evaluates a single global model across multiple conditional resamplings. However, to preserve the exchangeability of performance scores, the model must be evaluated using data that is independent of the training set. This necessitates a train-test split, which results in a loss of power.


To mitigate this limitation, \citet{liu2022fast} introduced the distilled CRT (dCRT), which performs a \emph{feature-wise} rather than \emph{sample-wise} split. Instead of refitting a full model for each feature and each resample, the dCRT performs a single expensive step per feature: a regression of $y$ on $X^{-j}$ using any predictive model. The residual represents the part of $y$ unexplained by the other features, and the test examines whether $X^j$ explains additional variation in this residual. However, while the distillation step is model-agnostic, the final association test between $X^j$ and $y$ is not.

 \subsubsection{Knockoffs}

Knockoffs were introduced by \citet{barber-candes2015} and generalized to the model-X framework by \citet{candes2018panning}. They have since become popular for variable selection, providing FDR control without relying on $p$-values or dependence assumptions such as PRDS~\cite{BH}. It relies on three components: knockoff variables $\widetilde{X} \in \mathbb{R}^p$, knockoff statistics $W \in \mathbb{R}^p$, and a data-dependent threshold (see Appendix~\ref{append:Knockoff_def} for details).

Briefly, knockoff variables act as synthetic copies of $X$ and allow one to distinguish genuine from spurious importance. Their construction is more restrictive than that of conditional samplers used in the CRT or CFI: they must satisfy a joint \emph{pairwise exchangeability} for the entire vector $(X, \widetilde{X})$, rather than only matching $\mathcal{L}(X^j \mid X^{-j})$. The knockoff statistic $W^j$ must satisfy an antisymmetry property: swapping the $j$-th feature with its knockoff must flip the sign of $W^j$ and remain invariant when swapping other features. Finally, to control the FDR at level $q$, the threshold is given by
 \begin{align}
    T_q=\mathrm{min}\left\{t\in \mathcal{W}:\frac{1+\#\{j:W^j\leq -t\}}{\#\{j:W^j\geq t\}\vee 1}\leq q \right\}.\label{eq:knockoff-threshold}
\end{align}
The FDR control of the knockoffs relies fundamentally on a symmetry of the statistics under the null. This symmetry underpins the validity of the knockoff threshold, which exploits the balance between positive and negative statistics. Formally, Lemma~1 of \citet{candes2018panning} shows that:
%
 \begin{lemma}[exchangeability]\label{lemma:exchang}
     Conditionally on $(|W_1|,\ldots, $ $|W_p|)$, the signs of $W_j$ for $j\in \mathcal{H}_0$ , are i.i.d. coin ﬂips.
 \end{lemma}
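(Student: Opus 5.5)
The plan is to reproduce the argument of Lemma~1 of \citet{candes2018panning}. It combines two ingredients: the pairwise exchangeability of knockoffs and the antisymmetry of the statistic. Fix a subset $S \subseteq \mathcal{H}_0$ of null indices, and write $[X,\widetilde X]_{\mathrm{swap}(S)}$ for the $n\times 2p$ data matrix in which column $X^j$ is exchanged with $\widetilde X^j$ for every $j\in S$.

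\textbf{Step 1: the swapped data have the same law as the original data.} We aim to show
\[
([X,\widetilde X]_{\mathrm{swap}(S)}, y) \overset{d}{=} ([X,\widetilde X], y).
\]
Pairwise exchangeability gives $[X,\widetilde X]_{\mathrm{swap}(S)} \overset{d}{=} [X,\widetilde X]$ for the covariates alone. Since $\widetilde X$ is built without looking at $y$, we have $\widetilde X \indep y \mid X$. I will combine this with the null hypothesis $Y\indep X^j\mid X^{-j}$ for $j\in S$. The goal is to show that the conditional law of $y$ given $[X,\widetilde X]_{\mathrm{swap}(S)}$ equals the conditional law of $y$ given $[X,\widetilde X]$, since both reduce to $\mathcal{L}(y\mid X^{-S})$.

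This is the main obstacle. It requires a careful chain of conditional independences and a density argument: first reduce to a single null index, then induct on the elements of $S$. I would follow Lemma~3.2 of \citet{candes2018panning}. Because the samples are i.i.d., the argument can be carried out row by row.

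\textbf{Step 2: the flip-sign property.} By antisymmetry, $W([X,\widetilde X]_{\mathrm{swap}(S)}, y) = \epsilon \odot W([X,\widetilde X],y)$, where $\epsilon_j=-1$ for $j\in S$ and $\epsilon_j=+1$ otherwise. Combined with Step 1, this gives
\[
(W_1,\dots,W_p)\overset{d}{=} (\epsilon_1W_1,\dots,\epsilon_pW_p)
\]
for every sign vector $\epsilon$ satisfying $\epsilon_j=1$ for all $j\notin\mathcal{H}_0$.

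\textbf{Step 3: conclude.} Draw a random $\epsilon$ that is uniform on the null coordinates, equals $1$ off them, and is independent of $W$. Step 2 shows that $W\overset{d}{=}\epsilon\odot W$. The absolute values $|W|$ are invariant under this transformation. Conditional on $|W|$, the null signs of $\epsilon\odot W$ are therefore i.i.d.\ fair coins: they are the product of independent uniform signs with fixed signs. One caveat is that ties at $W_j=0$ need a convention, namely treating the sign as a coin flip or noting that such coordinates are never selected. The conclusion then transfers to $W$ itself, which proves the lemma.
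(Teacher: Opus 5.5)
Your argument is correct. It is precisely the proof of Lemma~1 in \citet{candes2018panning}: swap invariance from their Lemma~3.2, then antisymmetry, then a random sign vector. The paper gives no proof of this lemma and simply cites that result.

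The paper does establish the same sign symmetry itself, for $W_{\mathrm{SKO}}$ in the appendix proof of Theorem~\ref{thm:FDR-control}, but by a different route. Neither of your first two ingredients is available there: there is no knockoff matrix satisfying pairwise exchangeability, and $W_{\mathrm{SKO}}$ has no flip-sign property under column swaps. Instead, under the null $\rho_j=\nu_j$, so the two residual pools coincide. Hence $\widetilde X^{(j)}_1$ and $\widetilde X^{(j)}_2$ are the same function of two i.i.d.\ residual draws ($\pi_{j,1}$ and $\pi_{j,2}$, or $U_j$ and $U'_j$ in the appendix notation). These draws are independent of the data and of the draws used for every other feature. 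Interchanging the two draws, simultaneously for any set of null indices, preserves the joint law, negates $W^j$ for those indices, and leaves all other coordinates untouched. This single symmetry of the auxiliary randomization replaces your Steps~1 and~2 at once and holds conditionally on the data. It is exactly what lets Semi-knockoffs dispense with the hard part (your Step~1), and your Step~3 then applies verbatim. Your route, in exchange, covers arbitrary antisymmetric statistics built from valid knockoffs.

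A small caution on Step~1: the heuristic that both conditional laws ``reduce to $\mathcal{L}(y\mid X^{-S})$'' would need $y\indep X^S\mid X^{-S}$ jointly. That does not follow from the coordinatewise nulls without an intersection-type condition; take $X^1=X^2$ and $y=X^1$. The single-index-then-compose argument you actually intend avoids this. Indeed, $\mathrm{swap}(S)$ is a composition of single swaps, and each single swap preserves the law of $([X,\widetilde X],y)$ using only $y\indep X^j\mid X^{-j}$ and $\widetilde X\indep y\mid X$.
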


However, achieving such symmetry with black-box models is nontrivial. For example, comparing the performance of a model $\widehat{m}$ on the original data versus a distribution in which the $j$-th coordinate is replaced by its knockoff \cite{Watson2021} fails to satisfy the required exchangeability. The difficulty stems from the fact that this statistic is not antisymmetric: swapping a feature with its knockoff in a coordinate other than $j$ does not leave the statistic invariant.

%
\section{Oracle Semi-knockoffs}
 The main distinction between model-agnostic methods (e.g., CFI, HRT) and coefficient-based approaches (e.g., LCD knockoffs, dCRT) is that the latter yield direct and interpretable measures of feature relevance through estimated coefficients. 
 In contrast, model-agnostic methods must infer relevance indirectly, typically by comparing losses on original versus perturbed data, i.e., via quantities such as
\[
l(\widehat{m}(\widetilde{X}^{(j)}), y) - l(\widehat{m}(X), y),
\]
where $\widetilde{X}^{(j)}$ is equal to $X$ except for the $j$-th coordinate, which is conditionally resampled ($\widetilde{X}^{(j)j}\sim \mathcal{L}(X^j\mid X^{-j})$).  

We observe that independence between the test set and the model is required; otherwise, the test would be biased, since the two distributions $l(\widehat{m}(\widetilde{X}^{(j)}), y)$ and $l(\widehat{m}(X), y)$ would not be the same under the null.  

A first possibility to avoid this data-splitting requirement is to sample on both sides from the conditional distribution, and thus to base the statistic on
\[
l(\widehat{m}(\widetilde{X}_1^{(j)}), y) - l(\widehat{m}(\widetilde{X}_2^{(j)}), y),
\]

where both $\widetilde{X}_1^{(j)}$ and $\widetilde{X}_2^{(j)}$ differ only in the $j$-th feature, which is independently sampled conditional on the remaining features, or on the remaining features together with the output, respectively, i.e.,
\[
\widetilde{X}_1^{j(j)}\sim \mathcal{L}(X^j \mid X^{-j})
\quad\text{and}\quad 
\widetilde{X}_2^{j(j)}\sim\mathcal{L}(X^j \mid X^{-j}, y).
\]




By doing so, we break the symmetry under the alternative while preserving it under the null. 
Under the null, since $X^j \indep y \mid X^{-j}$, both samples are drawn independently from the same distribution, ensuring exchangeability.
Under the alternative, the distributions differ because $y$ provides additional information about $X^j$. Note that imputing a feature while conditioning on $y$ has been shown to be effective for handling missing values \cite{DAgostinoMcGowan2024}.

However, since sampling from these conditional distributions is generally challenging, a simpler statistic can be obtained using a regression-based conditional sampler, as in \citet{Fisher2019, chamma2024statistically}. 
Specifically, they regress the $j$-th feature on the others and sample from the residuals. We proceed similarly, but also include in the input of the regression the target $y$.
Formally, we are interested in $$\nu_j(X^{-j}) = \mathbb{E}[X^j | X^{-j}]  \text{ and } 
\rho_j(X^{-j}, y) = \mathbb{E}[X^j | X^{-j}, y].$$
We drop the subscript $j$ when clear from context. Note that both quantities can be estimated with ML models.

Then, the new $j$-th coordinate of $\widetilde{X}^{(j)}_1$ is not sampled directly from $\mathcal{L}(X^j \mid X^{-j})$, but rather defined as
\[
\widetilde{X}_1^{(j)j} = \nu(X^{-j}) + \{x_l^j - \nu(x_l^{-j})\}= \nu(X^{-j}) + \epsilon_{j, 1}^l ,
\]
and for $\widetilde{X}_2^{(j)}$, instead of sampling from $\mathcal{L}(X^j \mid X^{-j}, y)$, 
\[
\widetilde{X}_2^{(j)j} = \rho(X^{-j}, y) + \{x_k^j - \rho(x_k^{-j}, y_k)\}= \rho(X^{-j}, y) + \epsilon_{j, 2}^k,
\]
where $x_l$ and $(x_k, y_k)$ are sampled independently (Alg.~\ref{alg:semi_knockoffs_oracle_wcx}), so that $\epsilon_{j,1}^l$ and $\epsilon_{j,2}^k$ are their respective theoretical residuals for predicting $x_l^j$ and $x_k^j$.

\begin{remark}
We emphasize that this does not produce valid knockoff variables. 
Obtaining valid knockoffs would require sequential conditional sampling based on prior regressions \cite{candes2018panning} and assumptions on the underlying distribution (e.g., Gaussian designs \cite{blain2025knockoffsfaildiagnosingfixing}). 
Importantly, such conditions are not required for semi-knockoff validity.
\end{remark}
\begin{remark}
This concerns continuous features. For categorical ones, a simple logistic regression can be used to estimate the conditional class probabilities, and all arguments and below theory extend directly to this setting.
\end{remark}


Thus, under the null, we still have exchangeability since
\[
\rho_j(X^{-j}, y) = \mathbb{E}[X^j \mid X^{-j}, y]
= \mathbb{E}[X^j \mid X^{-j}]
= \nu_j(X^{-j}).
\]
Hence, under the null we sample independently from the same distribution. 
Under the alternative, however, $y$ provides additional information about the $j$-th coordinate, leading to more accurate predictions and thus less perturbation.

Finally, most procedures are tailored to either $p$-value computation or FDR control, and transitioning between the two is nontrivial. 
While $p$-values can in principle be converted to FDR control via \cite{BH}, the required dependence assumptions are rarely checkable, and attempts in the reverse direction often yield pseudo–$p$-values lacking theoretical guarantees \cite{aggreg_KO}.
In the next sections, we introduce two versions of Semi-knockoffs that rely on the same exchangeability principle and provide either valid $p$-values or FDR control. 
The corresponding pseudocode is deferred to Appendix~\ref{sect:pseudocode}.

\subsection{Type-I error control}

Since each coordinate is symmetric around~0 under the null hypothesis and potentially larger under the alternative, it is possible to apply any nonparametric paired test comparing
\[
l(\widehat{m}(\widetilde{X}^{(j)}_1), y)
\quad\text{and}\quad
l(\widehat{m}(\widetilde{X}^{(j)}_2), y),
\]
for instance the \textbf{sign test} or the \textbf{Wilcoxon} test. This yields finite-sample type-I error control for any model $\widehat{m}$.

\begin{theorem}[Type-I error] 
\label{thm:type-I-control}
Given $\nu_j$ and $\rho_j$, nonparametric paired test Semi-knockoffs (Algorithm \ref{alg:theo-semi-KO-wilcox}) provide valid p-values.
\end{theorem}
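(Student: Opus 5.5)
The plan is to show that, under $\mathcal{H}_0^j: X^j \indep y \mid X^{-j}$, the paired differences
\[
D_i = l(\widehat{m}(\widetilde{x}^{(j)}_{i,1}), y_i) - l(\widehat{m}(\widetilde{x}^{(j)}_{i,2}), y_i), \qquad i=1,\ldots,n,
\]
are conditionally independent and each symmetric about $0$, given the training data used for $\widehat{m}$ and the observed $(x_i^{-j}, y_i)$. This is exactly the hypothesis under which the sign test and the Wilcoxon signed-rank test have exact finite-sample null distributions. Validity of the $p$-values then follows from the standard sign-flip argument.

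\textbf{Step 1: conditioning.} Condition on $\mathcal{D} = \{(x_i,y_i)\}_{i=1}^n$, which includes the fixed model $\widehat{m}$. The model may have been trained on $\mathcal{D}$; since we condition on it and all randomness comes from the resampling, no train-test split is needed. The randomness in the construction comes only from the independent draws of indices. For each $i$, Algorithm~\ref{alg:theo-semi-KO-wilcox} draws $l_i$ to form the residual $\epsilon^{l_i}_{j,1} = x^j_{l_i} - \nu(x^{-j}_{l_i})$ and $k_i$ to form $\epsilon^{k_i}_{j,2} = x^j_{k_i} - \rho(x^{-j}_{k_i}, y_{k_i})$. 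Under the null, $\rho_j = \nu_j$ identically, as noted just before the statement. Hence both residual pools coincide, $\epsilon^{m}_{j,1} = \epsilon^m_{j,2}$ for every $m$, and
\[
\widetilde{x}^{(j)j}_{i,1} = \nu(x_i^{-j}) + \epsilon^{l_i}, \qquad \widetilde{x}^{(j)j}_{i,2} = \nu(x_i^{-j}) + \epsilon^{k_i}.
\]
Since $l_i$ and $k_i$ are i.i.d. draws from the same index distribution, independent of each other and across $i$, the pair $(\widetilde{x}^{(j)}_{i,1}, \widetilde{x}^{(j)}_{i,2})$ is exchangeable conditionally on $\mathcal{D}$. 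Both entries share the same $x_i^{-j}$ and $y_i$.

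\textbf{Step 2: symmetry and independence.} Applying the same measurable map $z \mapsto l(\widehat{m}(z), y_i)$ to both entries of an exchangeable pair gives $D_i \overset{d}{=} -D_i$ conditionally on $\mathcal{D}$. Independence of the index draws across $i$ gives conditional independence of $D_1,\ldots,D_n$. Consequently, the sign vector $(\mathrm{sign}(D_i))_i$ consists of i.i.d. fair coin flips, independent of $(|D_i|)_i$. This is the analogue of Lemma~\ref{lemma:exchang} for paired differences. The sign test statistic is therefore Binomial$(n', 1/2)$ conditionally on the number $n'$ of nonzero $D_i$. The Wilcoxon signed-rank statistic has its usual permutation null distribution given the ranks of $|D_i|$.

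\textbf{Step 3: conclusion.} Computing the $p$-value from this exact conditional null distribution gives $\mathbb{P}(p \le \alpha \mid \mathcal{D}) \le \alpha$. Integrating over $\mathcal{D}$ yields unconditional validity.

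\textbf{Main obstacle.} The delicate point is ties: under the null, $l_i = k_i$ can occur, and losses can coincide, producing $D_i = 0$. I would handle this by dropping zeros, which is valid since we condition on which $D_i$ vanish, and by breaking remaining ties in $|D_i|$ with independent randomization. Alternatively, sampling $l_i \ne k_i$ without replacement removes the index collision. A second point to verify is that the algorithm's index distribution is the same for both draws; the same-distribution property is what makes the pair exchangeable. I would state this explicitly as a requirement on Algorithm~\ref{alg:theo-semi-KO-wilcox}.
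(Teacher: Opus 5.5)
Your key input is the same as the paper's: under the null $\rho_j=\nu_j$, so both imputations draw from one residual pool and are exchangeable. The paper stops at the marginal identity $l(\widehat{m}(\widetilde{X}^{(j)}_1),y)\overset{d}{=}l(\widehat{m}(\widetilde{X}^{(j)}_2),y)$. Your conditioning on $\mathcal{D}$, which makes an in-sample model harmless, together with your reduction to conditionally independent, symmetric differences $D_i$, is what an exact sign or signed-rank test actually needs, so the plan is right.

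The gap is in Steps~1--2. You treat the residual indices $l_i,k_i$ as independent across $i$, but Algorithm~\ref{alg:theo-semi-KO-wilcox} draws two permutations, so $l_i=\pi_{j,1}(i)$ and $k_i=\pi_{j,2}(i)$ are sampled without replacement. For fixed $i$, the pair $(l_i,k_i)$ is still uniform on $\{1,\dots,n\}^2$, so $D_i\overset{d}{=}-D_i$ given $\mathcal{D}$ survives. However, the $D_i$ are not conditionally independent. The construction only guarantees invariance of $(D_1,\dots,D_n)$ under the global flip obtained by swapping $\pi_{j,1}$ and $\pi_{j,2}$, not under coordinatewise flips.

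This already shows up for $n=2$. Write $\epsilon_k=x_k^j-\nu(x_k^{-j})$, and let $f_i(e)$ be the loss at $y_i$ of $\widehat{m}$ evaluated at $x_i$ with its $j$-th coordinate replaced by $\nu(x_i^{-j})+e$. Given $\mathcal{D}$, $(D_1,D_2)=(0,0)$ with probability $1/2$, and $(D_1,D_2)=\pm(a,b)$ with probability $1/4$ each, where $a=f_1(\epsilon_1)-f_1(\epsilon_2)$ and $b=f_2(\epsilon_2)-f_2(\epsilon_1)$. So whenever the differences are nonzero, the product of their signs is deterministic given $\mathcal{D}$. Hence your claim that the signs are i.i.d.\ fair coins, and with it the exact Binomial/Wilcoxon null distribution, does not follow for the algorithm as written.

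To close the gap, you have two options. The first is to prove the statement for a with-replacement scheme in which the pairs $(l_i,k_i)$ are i.i.d.\ uniform over $i$, and say explicitly that Algorithm~\ref{alg:theo-semi-KO-wilcox} is to be read this way. This matches the paper's verbal description in Section~3 (``sampled independently''), and the paper's FDR proof likewise models residual choices by i.i.d.\ uniform indices. For that scheme your argument is complete and more careful than the paper's. The second option is to give a separate argument that the permutation scheme still yields valid $p$-values despite the dependence it induces between pairs. The paper's one-line proof is silent on this point too.
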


Note that a t-test is not valid since under the null the variance vanishes and then the asymptotic normality does not hold \cite{williamson2023general, LocoVSShapley}.  




This approach eliminates the need for a train–test split: rather than requiring that the conditional samples resemble a held-out test set under the null, we rely solely on the symmetry of the sampling mechanism. A further advantage is that, although we implicitly use the CPI conditional sampler \cite{chamma2024statistically}, which was shown to be valid under an additive-innovation assumption in \cite{reyerolobo2025conditionalfeatureimportancerevisited}, we do not need to impose such assumptions on the input distribution to obtain semi-knockoff validity.




\subsection{FDR control}\label{subsect:fdr_control}

%
We note that the sign of each Semi-knockoff statistic $W^j_\mathrm{SKO}$ (Algorithm \ref{alg:semi_knockoffs}) is uniformly distributed as $\mathcal{U}(\{\pm 1\})$ and independent of the signs of the other features. This was not the case in the held-out versions, where dependencies remained across features. Hence, the exchangeability condition required for knockoffs (Lemma~\ref{lemma:exchang}) is satisfied. Consequently, we can estimate the set of important features in the same way as for standard knockoffs: we select the threshold $T_q$ as in~\eqref{eq:knockoff-threshold} and select
$$S_{\mathrm{SKO}}=
\bigl\{
j : W^j_\mathrm{SKO} \ge T_q
\bigr\}.$$
This procedure (Algorithm \ref{alg:theo-semi-KO}) controls the FDR at level $q$:
\begin{theorem}[FDR control] 
\label{thm:FDR-control}
Given $\nu_j$ and $\rho_j$ for every $j$, then $\mathrm{FDR}(S_{\mathrm{SKO}})\leq q$.
\end{theorem}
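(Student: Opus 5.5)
The plan is to reduce Theorem~\ref{thm:FDR-control} to the standard knockoff argument of \citet{barber-candes2015, candes2018panning}. Everything hinges on the sign-flip property of Lemma~\ref{lemma:exchang}; once that property is verified for $W_{\mathrm{SKO}}$, the data-dependent threshold $T_q$ in \eqref{eq:knockoff-threshold} yields FDR control by the usual supermartingale / optional-stopping argument. I will not need pairwise exchangeability of a full knockoff vector $(X,\widetilde X)$. I only need the coin-flip conclusion, and I will prove it directly from the construction of the semi-knockoffs, conditionally on the observed data $\{(x_i,y_i)\}$ and the pre-trained model $\widehat m$.

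\textbf{Step 1: per-feature exchangeability under the null.}
Fix $j\in\mathcal H_0$. Conditionally on the data, the two perturbed $j$-th coordinates are built from independently drawn rows $x_l$ and $(x_k,y_k)$:
\[
\widetilde X_1^{(j)j}=\nu(X^{-j})+\epsilon^l_{j,1}, \qquad \widetilde X_2^{(j)j}=\rho(X^{-j},y)+\epsilon^k_{j,2}.
\]
Under the null, $\rho_j(X^{-j},y)=\nu_j(X^{-j})$ for all arguments. Hence $\epsilon^k_{j,2}=x_k^j-\nu(x_k^{-j})$, and the two coordinates are produced by one and the same randomized map applied to i.i.d.\ index draws. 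It follows that, conditionally on the data, $(\widetilde X^{(j)}_1,\widetilde X^{(j)}_2)\overset{d}{=}(\widetilde X^{(j)}_2,\widetilde X^{(j)}_1)$. This is the same observation underlying Theorem~\ref{thm:type-I-control}.

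Since $W^j_{\mathrm{SKO}}$ is an antisymmetric function of the pair, namely a difference of aggregated losses of $\widehat m$ on the two copies, swapping the copies flips its sign while leaving $|W^j_{\mathrm{SKO}}|$ unchanged. So, conditionally on $|W^j_{\mathrm{SKO}}|$ and the data, $\mathrm{sign}(W^j_{\mathrm{SKO}})$ is a fair coin. Ties at $0$ can be broken by an independent randomization, or handled by the convention that $W^j=0$ is never selected.

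\textbf{Step 2: joint independence across null features.}
Because the resampling indices for different features $j$ are drawn independently, conditionally on the data the pairs $\{(\widetilde X^{(j)}_1,\widetilde X^{(j)}_2)\}_{j}$ are mutually independent. The swap for a null $j$ therefore leaves the joint law of all other statistics invariant. This gives exactly the statement of Lemma~\ref{lemma:exchang}: conditionally on $(|W^1|,\dots,|W^p|)$, together with the signs of non-null features, the signs of $W^j$ for $j\in\mathcal H_0$ are i.i.d.\ uniform on $\{\pm1\}$. I expect this to be the main obstacle, because the held-out versions fail precisely here. The care lies in conditioning on the full sample and on $\widehat m$, so that all cross-feature dependence passes through fixed quantities, and in checking that $W^j$ uses only the $j$-th pair of semi-knockoffs. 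If the algorithm reuses the same indices across features, I would instead argue with a simultaneous swap on any subset $S\subseteq\mathcal H_0$. Independence of the swaps would be replaced by invariance of the joint law under each such subset swap, which is still enough for the coin-flip conclusion.

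\textbf{Step 3: conclude with the knockoff+ argument.}
With the coin-flip property in hand, I apply Theorem~3.4 / Lemma~1 of \citet{candes2018panning}, or redo the short argument. Set $V^+(t)=\#\{j\in\mathcal H_0:W^j\ge t\}$ and $V^-(t)=\#\{j\in\mathcal H_0:W^j\le -t\}$. Then $V^+(t)/(1+V^-(t))$ is a supermartingale with respect to the backward filtration in $t$, and $T_q$ is a stopping time for this filtration. Optional stopping gives $\mathbb E[V^+(T_q)/(1+V^-(T_q))]\le 1$. Combining this with the definition of $T_q$ yields
\[
\mathrm{FDR}=\mathbb E\!\left[\frac{V^+(T_q)}{\#\{j:W^j\ge T_q\}\vee1}\right]\le q\,\mathbb E\!\left[\frac{V^+(T_q)}{1+V^-(T_q)}\right]\le q.
\]
The bound holds conditionally on the data, and therefore also unconditionally.
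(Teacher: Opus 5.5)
Your proof is correct and follows the same route as the paper. Under the null $\rho_j=\nu_j$, so the two copies are one map applied to independently drawn residual indices, which makes $W^j_{\mathrm{SKO}}$ sign-symmetric; independent permutations across features make the null signs independent; and the knockoff+ argument of \citet{barber-candes2015} concludes. You are, if anything, more explicit than the paper about pairwise exchangeability of the two copies, conditioning on the data and $\widehat{m}$, and ties at $0$.

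You should drop the fallback at the end of Step~2. If permutations were shared across features, a swap restricted to a subset $S\subseteq\mathcal H_0$ would not be a symmetry of the joint law, and the null signs need not be independent. This case never arises, though, because the algorithm draws fresh $\pi_{j,1},\pi_{j,2}$ for each $j$.
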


\section{Semi-knockoffs}
\subsection{Algorithm}
In practice, the conditional expectations $(\nu_j,\rho_j),j\in[p]$ are unknown and must be estimated, for instance using ML models. Using the same notation as before, we denote by $\widetilde{X}'^{(j)}_1$ (resp.\ $\widetilde{X}'^{(j)}_2$) the versions that use $\widehat{\nu}_j$ instead of $\nu_j$ (resp.\ $\widehat{\rho}_j$ instead of $\rho_j$). The practical implementation for type-I error control using the Wilcoxon test is given in Algorithm~\ref{alg:semi_knockoffs_wcx}, and the FDR procedure is provided in Appendix~\ref{alg:semi-KO}.
\begin{algorithm}[H]
\caption{Semi-knockoffs Wilcoxon (\texttt{SKO-Wcx})}
\label{alg:semi_knockoffs_wcx}
\begin{algorithmic}[1]
\item[] \textbf{Input:} Model $\widehat{m}$, data $\{(X_i,y_i)\}_i^n$, significance level $\alpha$, a feature $j$
    \item[] \quad \textbf{(Regression 1)} Fit $\widehat{\nu}_j \approx \mathbb{E}[X^j \mid X^{-j}]$
    \item[] \quad \textbf{(Regression 2)} Fit $\widehat{\rho}_j \approx \mathbb{E}[X^j \mid X^{-j}, y]$
    \item[] Compute residuals $\widehat{\epsilon}_{j,1} = X^j -\widehat{\nu}_j(X^{-j})$
    \item[] Compute residuals $\widehat{\epsilon}_{j,2} = X^j - \widehat{\rho}_j(X^{-j}, y)$
    \item[] Sample $\pi_{j,1}$ and $\pi_{j,2}$ permutations of $\{1,\dots,n\}$
    \item[] Construct 
    \begin{align*}
        \widetilde{X}'^{(j)}_{1,i}
        &= \widehat{\nu}_j(X^{-j}_i)
        + \widehat{\epsilon}_{j,1,\pi_{j,1}(i)}\\
        \widetilde{X}'^{(j)}_{2,i}
        &= \widehat{\rho}_j(X^{-j}_i, y_i)
        + \widehat{\epsilon}_{j,2,\pi_{j,2}(i)}
    \end{align*}
    \item[] Compute Wilcoxon test  between 
    \[
        \left\{l\bigl(\widehat{m}(\widetilde{X}'^{(j)}_{1,i}), y_i\bigr)\right\}
        \text{ and }
        \left\{l\bigl(\widehat{m}(\widetilde{X}'^{(j)}_{2,i}), y_i\bigr)\right\}
    \]
\item[] \textbf{Return} the computed p-value
\end{algorithmic}
\label{alg:semi-KO-wilcox}
\end{algorithm}

At first sight, estimating $\mathbb{E}[X^j \mid X^{-j}, y]$ may seem contradictory, since the model-$X$ framework makes no assumptions about the relationship between $X$ and $y$. 
However, as discussed in Section~\ref{sect:stab}, these regressions do not require complex models, so the procedure remains efficient. 

Moreover, we provide two results to study type-I error and FDR under these estimations. The first one (Section~\ref{sect:Wassers}) shows that 
\[
l(\widehat{m}(\widetilde{X}'^{(j)}_1), y)
\quad\text{and}\quad
l(\widehat{m}(\widetilde{X}'^{(j)}_2), y)
\]
converge in 1-Wasserstein distance under the null hypothesis. Hence, we sample iid from the same distribution. This follows from a stability result (Section \ref{sect:stab}) for $\widehat{\nu}$ and $\widehat{\rho}$, providing a finite-sample bound between a regularized ERM fitted with and without an unimportant coordinate.

The second result (Section~\ref{sect:conject}) is that, by extending a double-robustness argument from \citet{reyerolobo2025conditionalfeatureimportancerevisited} (Section~\ref{sect:DR}), the estimated statistic should preserve the sign of the theoretical quantity with high probability. 

Finally, note that perturbation-based methods often suffer from extrapolation issues (PFI, \citet{Hooker2019PleaseSP}), since predictions are made far from the training distribution. We do not expect this issue to arise here, as we condition on the remaining coordinates, and this sampling strategy has already shown strong performance \citep{chamma2024statistically}.


 \subsection{Null feature stability of the ERM}\label{sect:stab}
 
Under the model-X assumption, estimating $\nu_j$ is straightforward because the relationship between $X^{j}$ and $X^{-j}$ is assumed to be simple. For instance, when $X\sim \mathcal{N}(\mu,\Sigma)$ is Gaussian, a linear model is sufficient since
\[
\nu_j(x^{-j})=\mathbb{E}[X^{j}|X^{-j}]
=\mu^{j} + \Sigma_{j,-j}\Sigma^{-1}_{-j,-j}(x^{-j}-\mu^{-j}).
\]
Estimating $\rho_j=\mathbb{E}[X^{j}\mid X^{-j},y]$ may seem at odds with the model-X philosophy, which avoids modeling the $X$-$y$ dependency. However, as in standard knockoffs, there is no need to accurately learn $\rho_j$. It suffices that $\widehat{\rho}_j$ is correct under the null, where $\rho_j$ reduces to $\nu_j$, which is simple.

Moreover, since we compare two regressors with distinct input features, approximate exchangeability requires that they remain close under the null. This motivates the use of a regularized empirical risk to enforce stability (or robustness) with respect to the inclusion of non-informative features:
 \begin{align}
     R_n(\theta):=\frac{1}{n}\sum_{i=1}^n l(\theta^\top \chi_i, z_i)+\lambda \|\theta\|^2,
     \label{eq:reg-emp-risk}
 \end{align}
 for $\lambda>0$. In this risk we have denoted the target as $z$ rather than $X^{j}$, and the input as $\chi$ (resp.\ $X^{-j}$) rather than $(X^{-j},y)$ (resp.\ $X^{-j}$) when estimating $\rho_j$ (resp.\ $\nu_j$). We retain this notation throughout this section, as the resulting stability statement is general and of independent interest. 


Our stability result states that, the estimator trained with the additional uninformative feature $X^{j}$ and the estimator trained without it remain sufficiently close. 
In particular, their regression coefficients differ only by a small amount. 
While stability results for the removal or replacement of data points exist (see, e.g., \citet{bousquet2002stability}), to the best of our knowledge there are no analogous results addressing the removal of uninformative \emph{features}.
 
 Let's denote by $\widehat{\theta}$ (resp. $\widehat{\theta^{-j}}$) the minimizer of the empirical risk \eqref{eq:reg-emp-risk} in $\mathbb{R}^p$  (resp. $\mathbb{R}^{p-1}$ without the $j$-th coordinate). We denote by $\widetilde{\theta}^{j}=(0, \widehat{\theta^{-j}})$ the restricted optimizer extended by 0 in the $j$-th coordinate.

 \begin{theorem}[Optimization stability]\label{thm:opt_stab}
For $j \in \mathcal{H}_0$, under the assumptions stated in Appendix~\ref{sect:assump_stab}, we have, with probability greater than $1-\delta$, that
 \begin{align}
     \|\widetilde{\theta}^{j}-\widehat{\theta}\|_2\leq O_P\left(\sqrt{\frac{\mathrm{log}(1/\delta)}{n}}\right).
 \end{align}
 \end{theorem}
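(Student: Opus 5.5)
We will exploit strong convexity of the regularized risk $R_n$ in \eqref{eq:reg-emp-risk}. Since the loss is convex in its first argument (assumed in Appendix~\ref{sect:assump_stab}) and the penalty $\lambda\|\theta\|^2$ is $2\lambda$-strongly convex, $R_n$ is $2\lambda$-strongly convex on $\mathbb{R}^p$, and $\widehat{\theta}$ is its unique minimizer. Strong convexity then gives
\[
\|\widetilde{\theta}^{j}-\widehat{\theta}\|_2\leq \frac{1}{2\lambda}\,\bigl\|\nabla R_n(\widetilde{\theta}^{j})\bigr\|_2 .
\]
This follows by writing $\langle \nabla R_n(\widetilde\theta^j)-\nabla R_n(\widehat\theta),\widetilde\theta^j-\widehat\theta\rangle\ge 2\lambda\|\widetilde\theta^j-\widehat\theta\|^2$ with $\nabla R_n(\widehat\theta)=0$ and applying Cauchy--Schwarz; a subgradient version works if $l$ is not differentiable. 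The whole problem therefore reduces to bounding the gradient of the full risk at the restricted optimizer extended by zero.

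The next step is to compute this gradient coordinatewise. For every coordinate $k\neq j$, the gradient of $R_n$ at $\widetilde{\theta}^{j}$ coincides with the gradient of the restricted risk at $\widehat{\theta^{-j}}$. This holds because $\widetilde\theta^{j\top}\chi_i=\widehat{\theta^{-j}}^\top\chi_i^{-j}$ and $\widetilde\theta^j_j=0$ contributes nothing to the penalty. By first-order optimality of $\widehat{\theta^{-j}}$, these coordinates vanish, so only the $j$-th coordinate remains:
\[
\nabla_j R_n(\widetilde{\theta}^{j})=\frac1n\sum_{i=1}^n l'\bigl(\widehat{\theta^{-j}}{}^\top\chi_i^{-j},z_i\bigr)\,\chi_i^{j}.
\]
The target is therefore to show that this single empirical average is $O_P(\sqrt{\log(1/\delta)/n})$ with probability $1-\delta$.

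This is the main obstacle, because $\widehat{\theta^{-j}}$ depends on the data, so the summands are not i.i.d. We will first replace $\widehat{\theta^{-j}}$ by its population counterpart $\theta^{-j}_\star$, the minimizer of the population regularized risk without coordinate $j$.

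At $\theta^{-j}_\star$ the summands are i.i.d. The null hypothesis $j\in\mathcal{H}_0$ (in the regression sense that $\chi^j$ carries no information on $z$ given $\chi^{-j}$), combined with the population first-order condition, should make the population mean $\mathbb{E}[l'(\theta^{-j\top}_\star\chi^{-j},z)\chi^j]$ vanish. For squared loss this reduces to orthogonality of the residual to $\chi^j$, which holds under the null for the best predictor; otherwise we invoke whatever orthogonality condition is stated in the appendix assumptions. Hoeffding's inequality, or a Bernstein inequality under sub-Gaussian or bounded features and bounded $l'$, then gives the $\sqrt{\log(1/\delta)/n}$ rate.

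The replacement error is controlled by Lipschitzness of $l'$ in its first argument together with bounded $\chi$. This gives a bound of order $\|\widehat{\theta^{-j}}-\theta^{-j}_\star\|_2\cdot \frac1n\sum_i\|\chi_i\|\,|\chi_i^j|$. The first factor is itself $O_P(\sqrt{\log(1/\delta)/n})$ by the same strong-convexity argument applied to the restricted empirical versus population risk, since the gradient of the empirical risk at $\theta_\star^{-j}$ is a centered i.i.d. average. The second factor is $O_P(1)$. A union bound over the two events, rescaling $\delta$, and dividing by $2\lambda$ then yields the stated bound.
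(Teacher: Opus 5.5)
Your outline follows the paper's proof almost step for step: the strong-convexity reduction to $\nabla_j R_n(\widetilde{\theta}^{j})$, the other coordinates vanishing by optimality of $\widehat{\theta^{-j}}$, a score at a population parameter plus a Lipschitz replacement error, a second strong-convexity bound on $\|\widehat{\theta^{-j}}-\theta^{-j}_\star\|_2$, and then concentration. It breaks at the centering step.

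You take $\theta^{-j}_\star$ to be the minimizer of the \emph{regularized} restricted population risk. That choice is what makes your replacement step work, since then $\mathbb{E}[l'\chi^{-j}]=-2\lambda\theta^{-j}_\star$ and the empirical gradient at $\theta^{-j}_\star$ is centered. For this parameter, however, $\mathbb{E}[l'(\theta_\star^{-j\top}\chi^{-j},z)\,\chi^{j}]$ is in general not zero. The population first-order condition only concerns the coordinates $-j$. The null only yields $\mathbb{E}[l'\chi^j]=\mathbb{E}\bigl[g(\chi^{-j})\,\mathbb{E}[\chi^j\mid\chi^{-j}]\bigr]$ with $g(\chi^{-j})=\mathbb{E}[l'(\theta_\star^{-j\top}\chi^{-j},z)\mid\chi^{-j}]$. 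This has no reason to vanish unless $\theta_\star^{-j\top}\chi^{-j}$ is the conditional mean of $z$, which is precisely what the ridge shrinkage prevents.

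Concretely, take squared loss, $\chi=(\chi^1,\chi^2)$ standard Gaussian with correlation $r$, $j=1$, and $z=b\chi^2+\varepsilon$ with $\varepsilon$ independent of $\chi$, so $j\in\mathcal{H}_0$. Then $\theta^{-j}_\star=b/(1+\lambda)$ and $\mathbb{E}[l'\chi^j]=-2\lambda rb/(1+\lambda)$, which is nonzero when $rb\neq 0$. The full population ridge minimizer has $j$-th coordinate $\lambda rb/\{(1+\lambda)^2-r^2\}\neq 0$. So for fixed $\lambda>0$, $\widehat{\theta}$ and $\widetilde{\theta}^{j}$ converge to different limits, and $\|\widetilde{\theta}^{j}-\widehat{\theta}\|_2$ does not go to zero at all. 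No argument can give the stated rate under the stated assumptions. This is the generic situation in the application, where the dropped coordinate is $y$, which is correlated with $X^{-j}$, and $z=X^j$.

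The paper's proof has the same hole, hidden by using $\theta^\star$ in two incompatible senses. It centers the first term via the assumption $\mathbb{E}[\partial_u l(\mu(\chi),z)\mid\chi]=0$ for the conditional mean $\mu$. This implicitly identifies ${\theta^\star}^\top\chi$ with $\mu(\chi)$ and asserts $\theta^\star_j=0$ from conditional independence. Its $\Delta$-step, by contrast, uses $2\lambda\theta^\star=-\mathbb{E}[\chi\nabla l]$, i.e.\ $\theta^\star$ as the regularized minimizer. Under the first reading the replacement step fails instead: the restricted empirical gradient at the true parameter has mean $2\lambda\theta^{\star -j}\neq 0$, so $\widehat{\theta^{-j}}$ carries an $O(1)$ regularization bias.

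A correct statement needs an extra ingredient. One option is $\lambda=\lambda_n\lesssim n^{-1/2}$, with curvature coming from the loss and design (such as $\lambda_{\min}(\Sigma)>0$ for squared loss) rather than from the penalty, so that the $O(\lambda_n)$ bias is of the claimed order. Another is a bound that carries the deterministic gap $\|\theta_\star-(0,\theta^{-j}_\star)\|_2$ between the full and restricted regularized population minimizers.

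As a side note, your replacement-error bound $\|\widehat{\theta^{-j}}-\theta^{-j}_\star\|_2\cdot\frac1n\sum_i\|\chi_i\|\,|\chi_i^j|$ is sound. The paper's version is not, because it turns a sum of absolute values into the signed inner product $(\frac1n{\chi^{-j}}^\top\chi^j)^\top\Delta$.
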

 
It relies on regularity assumptions on the loss, which we verify for the quadratic and cross-entropy. We also assume standard conditions on the data distribution (e.g., subgaussianity). As a consequence, $\widehat{\nu}$ and $\widehat{\rho}$ are close, as illustrated by the blue dots in Figure~\ref{fig:lasso_ridge_stability}, which show the difference in the imputer’s coefficients when suppressing an unimportant feature. In contrast, the red dots exhibit larger changes since the suppressed coordinate is important. Although the result applies only to a l2-regularized imputer, we observe similar behavior for Lasso in practice. It mainly reflects the desirable property that adding a noisy variable should not substantially affect the model.
In Section~\ref{sect:conject} we provide additional arguments based on a double-robustness property.



 \begin{figure}
    \centering
    \includegraphics[width=0.45\textwidth]{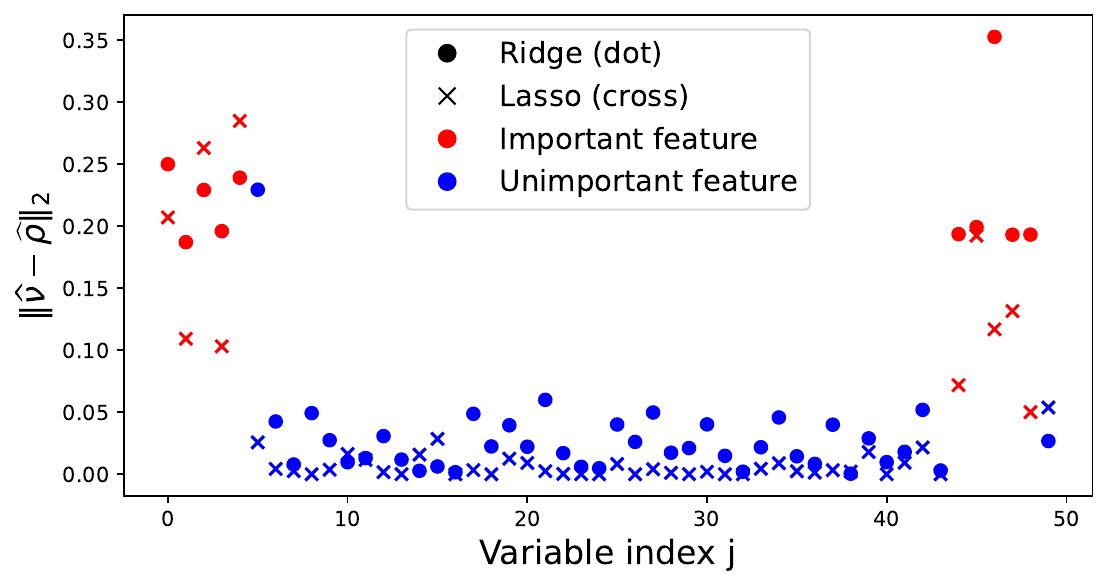}
    \caption{\textbf{Optimization stability.} Data are generated from $z = \chi\beta + \epsilon$, where $\beta$ is $0.25$-sparse with important features grouped in blocks of 5 sampled uniformly. We set $n = 300$, $p = 50$, noise level at $\|\chi\beta\|/2$ and $\chi \sim \mathcal{N}(0,\Sigma)$ with $\Sigma_{i,j} = 0.6^{|i-j|}$. The $y$-axis shows the difference between the model coefficients with and without the $x$-axis coordinate.
 }
    \label{fig:lasso_ridge_stability}
\end{figure}


 \subsection{Distribution control}\label{sect:Wassers}
 

The validity of the method relies on the fact that, under the null, the two imputers $\widehat{\rho}_j$ and $\widehat{\nu}_j$ are sufficiently close. Then, both their predictions and the residuals from which we sample are similar. Hence, we are sampling independently from two distributions that converge . Since our test statistic is based on their difference, this guarantees the required exchangeability.


 
We recover the Semi-knockoffs notation: $\widehat{\nu}_j$ is the regressor of $X^j$ on $X^{-j}$, and $\widehat{\rho}_j$ the one of $X^j$ on $(X^{-j}, y)$. We denote by $\hat{P}_1^j$ and $\hat{P}_2^j$ the distributions induced by sampling with $\widehat{\nu}_j$ and $\widehat{\rho}_j$, respectively, that is, the laws of \[
l(\widehat{m}(\widetilde{X}'^{(j)}_1), y)
\quad\text{and}\quad
l(\widehat{m}(\widetilde{X}'^{(j)}_2), y).
\] We work under the assumptions of the previous section and also assume that the model is sufficiently regular, satisfying a Lipschitz condition, together with a Lipschitz loss on its prediction argument, as in~\citet{opt_error}.

 \begin{theorem}[$\mathcal{W}_1$ control]
 For $j \in \mathcal{H}_0$, under Theorem \ref{thm:opt_stab} assumptions and that model $\widehat{m}$ and loss in the prediction argument are Lipschitz, we have, with probability at least $1-\delta$, that
 \begin{align*}
     \mathcal{W}_1(\hat{P}_1^j, \hat{P}_2^j)\leq O_P\left(\sqrt{\frac{\mathrm{log}(1/\delta)}{n}}\right).
 \end{align*}\label{theo:W_1conv}
 \end{theorem}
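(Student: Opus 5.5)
We bound $\mathcal{W}_1$ through a coupling. Build $\widetilde{X}'^{(j)}_1$ and $\widetilde{X}'^{(j)}_2$ on the same probability space and control $\mathbb{E}\bigl|l(\widehat{m}(\widetilde{X}'^{(j)}_1),y)-l(\widehat{m}(\widetilde{X}'^{(j)}_2),y)\bigr|$. Any coupling gives an upper bound on $\mathcal{W}_1(\hat{P}_1^j,\hat{P}_2^j)$.

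The coupling uses the same observation $(X_i,y_i)$ and the same permutation index in both constructions, $\pi_{j,1}=\pi_{j,2}=\pi$. The two perturbed inputs then agree in every coordinate except the $j$-th. Let $L_l$ be the Lipschitz constant of the loss in its prediction argument and $L_m$ that of $\widehat{m}$. Then
\begin{align*}
\bigl|l(\widehat{m}(\widetilde{X}'^{(j)}_{1,i}),y_i)-l(\widehat{m}(\widetilde{X}'^{(j)}_{2,i}),y_i)\bigr|
&\le L_l L_m \bigl|\widehat{\nu}_j(X_i^{-j})-\widehat{\rho}_j(X_i^{-j},y_i)\bigr| \\
&\quad + L_l L_m\bigl|\widehat{\epsilon}_{j,1,\pi(i)}-\widehat{\epsilon}_{j,2,\pi(i)}\bigr|.
\end{align*}
The residual difference equals $\widehat{\rho}_j(X_k^{-j},y_k)-\widehat{\nu}_j(X_k^{-j})$ with $k=\pi(i)$. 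Both terms therefore reduce to the same quantity, the gap between the two imputers evaluated at a data point.

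In the linear-in-features setting of \eqref{eq:reg-emp-risk}, write $\widehat{\nu}_j(x^{-j})=\widehat{\theta^{-y}}{}^\top x^{-j}$ and $\widehat{\rho}_j(x^{-j},y)=\widehat{\theta}^\top(x^{-j},y)$. Here $y$ plays the role of the null coordinate in Theorem~\ref{thm:opt_stab}, since $X^j\indep y\mid X^{-j}$ under $j\in\mathcal{H}_0$. With $\widetilde{\theta}$ the restricted optimizer padded by zero, the difference is $(\widetilde{\theta}-\widehat{\theta})^\top \chi_k$. By Cauchy--Schwarz,
\[
|\widehat{\rho}_j-\widehat{\nu}_j|(\chi_k)\le \|\widetilde{\theta}-\widehat{\theta}\|_2\,\|\chi_k\|_2 .
\]
Averaging over $i$ and over the uniform draw of $k$, the $\mathcal{W}_1$ bound is at most $2L_lL_m\|\widetilde{\theta}-\widehat{\theta}\|_2\,\mathbb{E}\|\chi\|_2$, or its empirical analogue $\frac1n\sum_k\|\chi_k\|_2$. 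Theorem~\ref{thm:opt_stab} supplies the rate $O_P(\sqrt{\log(1/\delta)/n})$ with probability $1-\delta$.

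The remaining factor is controlled by the subgaussianity assumption of Appendix~\ref{sect:assump_stab}. It gives $\mathbb{E}\|\chi\|_2<\infty$, and $\frac1n\sum_k\|\chi_k\|_2=O_P(1)$ by the law of large numbers or a subgaussian concentration bound. This is a constant depending on $p$ and is absorbed into the $O_P$.

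\textbf{Main obstacle: what $\hat{P}_1^j,\hat{P}_2^j$ mean.} If they are the empirical laws over the $n$ constructed points, the argument above is complete, because the shared permutation is a valid coupling of the two uniform empirical measures. If they are population laws conditional on the fitted $\widehat{\nu}_j,\widehat{\rho}_j,\widehat{m}$, with a fresh $(X,y)$ and a residual drawn uniformly from the training residuals, the same coupling still works. In that case the first term involves a fresh point, so $\mathbb{E}\|\chi\|_2$ appears in place of the empirical mean.

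A second subtlety is the intersection of probability events. The $1-\delta$ event of Theorem~\ref{thm:opt_stab} and the event bounding the feature norms should be combined by a union bound, replacing $\delta$ by $\delta/2$, which leaves the rate unchanged.

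Finally, the argument is stated for linear-in-parameter imputers, which is exactly the setting of \eqref{eq:reg-emp-risk}. Any feature map would only change $\|\chi\|$ to the norm of the mapped features.
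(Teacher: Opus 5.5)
Your proposal is correct and follows essentially the same route as the paper. Both arguments couple the two laws by sharing the base observation $(X,y)$ and the residual index, use the Lipschitz constants of the loss and of $\widehat{m}$ to reduce to twice the mean imputer gap $|\widehat{\nu}_j-\widehat{\rho}_j|$, and then apply linearity, Cauchy--Schwarz with the zero-padded $\widetilde{\theta}$ (with $y$ as the null coordinate), Theorem~\ref{thm:opt_stab}, and subgaussianity of $(X^{-j},y)$. Your coupling is cleaner than the paper's, which writes $y_1,y_2$ before implicitly identifying them. One caveat: the shared-index coupling couples the sampling laws, not two realized empirical measures built with independent permutations $\pi_{j,1}\neq\pi_{j,2}$, so your claim that the argument is complete under that interpretation does not follow as stated.
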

 This result connects the parameter-level stability from Theorem \ref{thm:opt_stab} to distributional stability by bounding, in Wasserstein distance, the distributions from which we sample.
 

 \subsection{Double robustness}\label{sect:DR}
 %
%

\citet{reyerolobo2025conditionalfeatureimportancerevisited} introduced the concept of \emph{double robustness} for the CFI, which is similarly based on an estimated model $\widehat{m}$ and conditional sampler $\widehat{P}$. 
%
They showed that, for the Gaussian linear model and using the regression-based conditional sampler $\widehat{\nu}$, when both $\widehat{m}$ and $\widehat{\nu}$ are well estimated, the CFI converges to zero at a faster rate (quadratic rather than linear) under the null.
Here, we generalize this perspective and, under smoothness assumptions on $\widehat{m}$, show that the decay rates associated with the two models compound, yielding this faster convergence.
%
%
 \begin{theorem}[Double robustness]\label{thm:DR}
 Assume that $j\in \mathcal{H}_0$ and that $t\to l(t, y)$ is $C^2$ with bounded derivatives. Assume also that $\widehat{m}$ is differentiable in the $j$-coordinate and denote by $a_n:=\partial_{x^j}\widehat{m}(X^{-j}, t)$ for a $t$ between $\widetilde{X}'^{j}$ and $\widetilde{X}^j$. Assume that $\widehat{\nu}-\nu=O_P(b_n)$. Then, $$l(\widehat{m}(\widetilde{X}'), y)-l(\widehat{m}(\widetilde{X}), y)=O_P(a_n b_n).$$
 \end{theorem}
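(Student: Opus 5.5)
The proof is a two-stage Taylor expansion: first of the loss in its prediction argument, then of the model in its $j$-th coordinate. We compare the estimated semi-knockoff $\widetilde{X}'$, whose $j$-th coordinate is $\widehat{\nu}(X^{-j}) + \widehat{\epsilon}$, with the oracle $\widetilde{X}$, whose $j$-th coordinate is $\nu(X^{-j}) + \epsilon$. The two vectors agree on all coordinates except $j$.

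\emph{Step 1 (loss).} Since $t\mapsto l(t,y)$ is $C^2$ with bounded derivatives, the mean value theorem gives
\begin{align*}
l(\widehat{m}(\widetilde{X}'), y)-l(\widehat{m}(\widetilde{X}), y) = \partial_t l(\xi, y)\,\bigl\{\widehat{m}(\widetilde{X}')-\widehat{m}(\widetilde{X})\bigr\}
\end{align*}
for some $\xi$ between the two predictions. Because $\partial_t l$ is bounded, it suffices to show $\widehat{m}(\widetilde{X}')-\widehat{m}(\widetilde{X})=O_P(a_n b_n)$. The second-order bound is not needed for this step, only boundedness of the first derivative.

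\emph{Step 2 (model).} Since $\widehat{m}$ is differentiable in $x^j$ and the two inputs share $X^{-j}$, the mean value theorem gives
\begin{align*}
\widehat{m}(\widetilde{X}')-\widehat{m}(\widetilde{X}) = \partial_{x^j}\widehat{m}(X^{-j},t)\,\bigl(\widetilde{X}'^{j}-\widetilde{X}^{j}\bigr) = a_n\,\bigl(\widetilde{X}'^{j}-\widetilde{X}^{j}\bigr),
\end{align*}
with $t$ between $\widetilde{X}'^{j}$ and $\widetilde{X}^j$, which is exactly how $a_n$ is defined in the statement. The null hypothesis $j\in\mathcal{H}_0$ enters here through interpretation rather than algebra: under the null a reasonable model should have $\partial_{x^j}\widehat{m}$ small, so $a_n$ is the model-side rate. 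For the oracle, the consistency of $\widehat{m}$ makes this derivative vanish.

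\emph{Step 3 (sampler).} It remains to show $\widetilde{X}'^{j}-\widetilde{X}^{j}=O_P(b_n)$. Coupling the two constructions with the same permuted index $k$ gives
\begin{align*}
\widetilde{X}'^{j}-\widetilde{X}^{j} = \{\widehat{\nu}(X^{-j})-\nu(X^{-j})\} - \{\widehat{\nu}(x_k^{-j})-\nu(x_k^{-j})\}.
\end{align*}
Each bracket is $O_P(b_n)$ by the assumption $\widehat{\nu}-\nu=O_P(b_n)$, evaluated at the test point and at the resampled point respectively. Multiplying the three factors, $O(1)\cdot a_n\cdot O_P(b_n)$, yields the claim.

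\textbf{Main obstacle.} The delicate step is Step 3. The stated rate $\widehat{\nu}-\nu=O_P(b_n)$ must hold uniformly, or at least at the random points $X^{-j}$ and $x_k^{-j}$, including training points that are resampled via the permutation. I would first interpret the assumption in sup-norm, or pointwise at an independent draw, and argue by exchangeability of $x_k$ that the same rate transfers. A second, less serious subtlety is that $a_n$ is itself random and depends on $t$. The product $a_n\,O_P(b_n)$ is therefore to be read as a product of random quantities, which I would keep explicit rather than try to bound $a_n$ separately.
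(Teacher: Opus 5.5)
Your proposal is correct and follows essentially the same route as the paper: an expansion of the loss in its prediction argument, then the mean value theorem for $\widehat{m}$ in the $j$-th coordinate (which yields $a_n$ exactly as defined), then a coupling through a common resampled index that reduces $\widetilde{X}'^{j}-\widetilde{X}^{j}$ to two $\widehat{\nu}-\nu$ terms. The only deviation is that you use a first-order mean value step on the loss instead of the paper's second-order Taylor expansion; the paper itself remarks that this relaxation suffices, and it avoids needing $a_n b_n = O_P(1)$ to absorb the quadratic remainder. Your minus sign in the Step 3 difference is also the correct one; the paper's display has a $+$ typo there, which does not affect the rate.
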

%
%
As we will see in the next section, strict differentiability is not necessary in practice. For instance, with Random Forests, we still observe faster convergence. Indeed, several consistency results have highlighted their adaptability to sparse settings, showing that splits are rarely made on null features \cite{Scornet_2015ConsistencyRF, pmlr-v130-klusowski21b}, leading to the corresponding feature being effectively insignificant in the model.
Other methods, such as Neural Networks, can be trained with an explicit penalization on the gradients to improve generalization \cite{zhao2022penalizing}. However, as seen below, this is not necessary to achieve the faster rate.
\begin{remark}
This indeed generalizes the faster rates of the Gaussian linear model obtained in \citet{reyerolobo2025conditionalfeatureimportancerevisited}. Specifically, for $\widehat{m}(X) = \widehat{\beta}^\top X$, we have $\partial_{x^j}\widehat{m}(X) = \widehat{\beta}^j$, which, under the null hypothesis, is centered around $0$ with variance of order $O(1/n)$. Moreover, using Gaussianity, $\nu$ is linear. This yields the desired result, since $\widehat{\nu}$ also achieves the same parametric rate. The same reasoning extends naturally to any generalized linear model.
\end{remark}
%
\subsection{Applications of double robustness to Semi-knockoffs}\label{sect:conject}

Note that the validity of Semi-knockoffs mainly relies on \emph{exchangeability under the null}. This property follows from $X_1$ and $X_2$ being sampled independently from the same distribution, and from the fact that under the null
$\rho(X^{-j}, y) := \mathbb{E}[X^j \mid X^{-j}, y] = \mathbb{E}[X^j \mid X^{-j}] =: \nu(X^{-j}).$ Consequently, for any model $\widehat{m}$, we have
\[
\mathrm{sign}\left(l(\widehat{m}(\widetilde{X}_1), y) - l(\widehat{m}(\widetilde{X}_2), y)\right) \;\overset{d}{=}\; \epsilon^j,
\]
where $\epsilon^j \sim \mathcal{U}(\{-1, +1\})$ is a Rademacher r.v.  

We would like the same result to hold with high probability for the empirical counterparts $\widetilde{X}'_1$ and $\widetilde{X}'_2$ based on $\widehat{\nu}$ and $\widehat{\rho}$. We conjecture that this is the case, based on a double robustness argument. Indeed, we expect that the sign of the empirical difference matches the sign of the theoretical one, yielding the desired independent Rademacher variable. Formally, if we arbitrarily assume that the difference is positive, i.e.,
\begin{align}
    l(\widehat{m}(\widetilde{X}_1), y)- l(\widehat{m}(\widetilde{X}_2), y)>0,\label{eq:conj_Semi_KO}
\end{align}
we expect the same sign for the empirical counterparts. This expectation is justified because the difference in \eqref{eq:conj_Semi_KO} is of the order of the insensitivity of the model $\widehat{m}$ with respect to the $j$-th coordinate, while the differences 
\[
l(\widehat{m}(\widetilde{X}_1), y) - l(\widehat{m}(\widetilde{X}'_1), y) \text{ and } l(\widehat{m}(\widetilde{X}_2), y) - l(\widehat{m}(\widetilde{X}'_2), y)
\]
decay according to both $\widehat{m}$ and $\widehat{\nu}$ (or $\widehat{\rho}$), then faster, as formalized in Theorem~\ref{thm:DR} and illustrated in Figure~\ref{fig:conjecture}.
%
\begin{figure}
    \centering
    \begin{tikzpicture}[scale=1.2, thick]
    \coordinate (O) at (-1,0); 
    \coordinate (A) at (0,0);
    \coordinate (B) at (1.5,0);
    \coordinate (C) at (3,0);
    \coordinate (D) at (4.5,0);
    
    \draw (-1.5,0) -- (5,0);

    \fill (O) circle (1.5pt) node[below=3pt] {0};
    \foreach \p/\name in {A/{$l(\widehat{m}(\widetilde{X}_1), y)$}, B/{$l(\widehat{m}(\widetilde{X}'_1), y)$}, C/{$l(\widehat{m}(\widetilde{X}'_2), y)$}, D/{$l(\widehat{m}(\widetilde{X}_2), y)$}}
        \draw[thick] (\p) node[below=3pt] {\name} 
                      +(-0.1,-0.1) -- +(0.1,0.1) 
                      +(-0.1,0.1) -- +(0.1,-0.1);

    \draw [decorate, decoration={brace, amplitude=6pt}]
        (A) -- (B) node[midway, yshift=14pt] {\textcolor{orange}{$O_P((\widehat{m}-m)(\widehat{\nu}-\nu))$}};

    \draw [decorate, decoration={brace, amplitude=6pt}]
        (C) -- (D) node[midway, yshift=14pt] {\textcolor{orange}{$O_P((\widehat{m}-m)(\widehat{\nu}-\nu))$}};

    \draw [decorate, decoration={brace, amplitude=10pt, mirror}]
        ([yshift=-20pt]A.south west) -- ([yshift=-20pt]D.south east)
        node[midway, yshift=-16pt] {\textcolor{blue}{$O_P(\widehat{m}-m)$}};
\end{tikzpicture}
    \caption{Illustration of the conjecture that $\mathbb{P}\Bigl(l(\widehat{m}(\widetilde{X}'_1), y) > l(\widehat{m}(\widetilde{X}'_2), y) \;\Big|\; l(\widehat{m}(\widetilde{X}_1), y) > l(\widehat{m}(\widetilde{X}_2), y)\Bigr) \;\longrightarrow\; 1,$
thanks to the double robustness property.
}
    \label{fig:conjecture}
\end{figure}
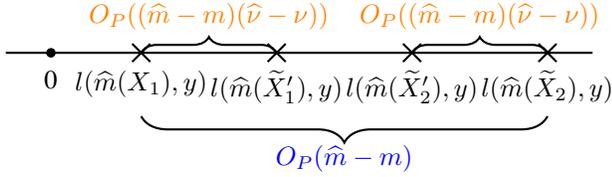
%

Figure~\ref{fig:DR_main} presents the exchangeability idea through double robustness in practice. The same experiment using other learners and studying the effect of independence among training samples is discussed in Appendix~\ref{app:DR_exp}.
For a null feature ($X^0$), we plot in blue the distribution capturing only the decay resulting from the sensitivity of $\widehat{m}$ to coordinate $j=0$, obtained by comparing the performance on two independent samples. 
We incorporate in orange the decay from both $\widehat{m}$ and the imputers $\widehat{\rho}$ and $\widehat{\nu}$ by comparing the theoretical and empirical residuals from the same sample.
Even when employing complex learners for $\widehat{m}$ (Random Forests, Gradient Boosting, and Neural Networks), we observe that the distribution enjoying double robustness (orange) is substantially more concentrated around zero, as established by Theorem~\ref{thm:DR}. 
Moreover, in practice, our \emph{knockoff statistic} $W^j_\mathrm{SKO}$ is always based on the distribution  in blue corresponding to the fully estimated models, which remains symmetric and centered at zero across all three complex learners, thereby preserving the desired exchangeability property.
%
%
%
%
\begin{figure}[t!] 
\hspace*{-5mm}
    \centering
    \includegraphics[width=0.52\textwidth]{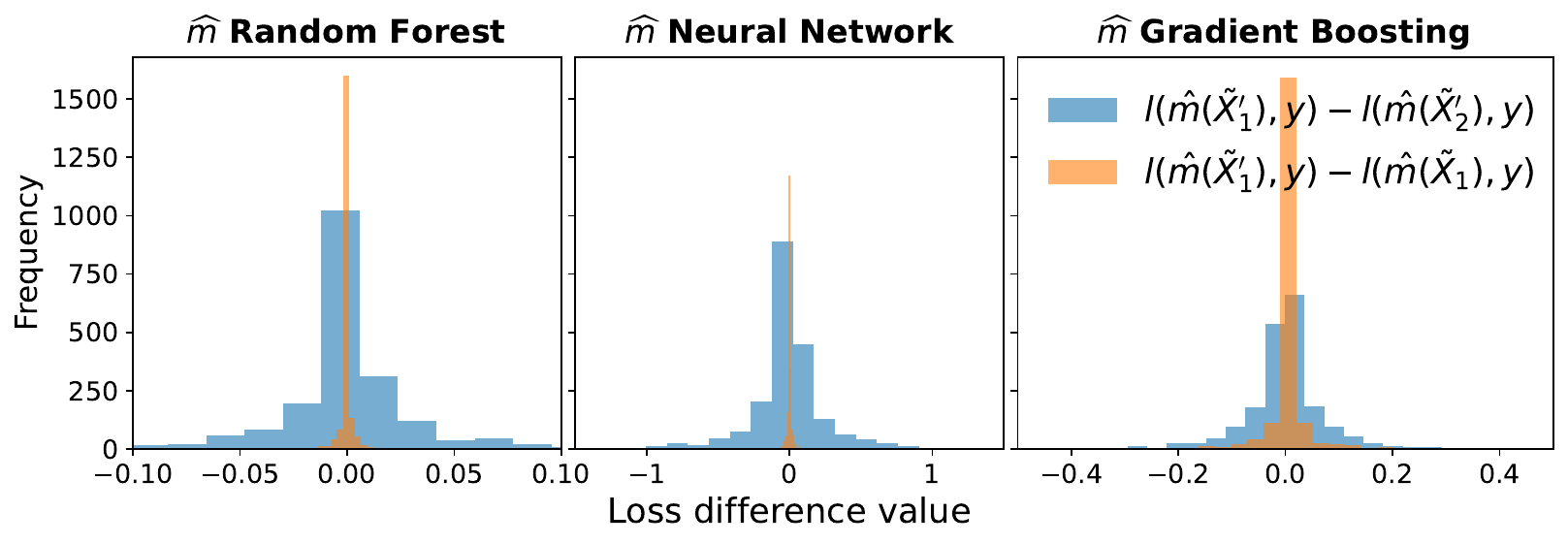}
    \caption{\textbf{Empirical evidence for Double Robustness:} Distribution of the Semi-knockoff statistic, i.e., the difference in loss evaluated at two independently sampled estimated residuals (blue: $l(\widehat{m}(\widetilde{X}_1'), y) - l(\widehat{m}(\widetilde{X}_2'), y)$), and distribution of the difference between the theoretical and estimated imputer (orange: $l(\widehat{m}(\widetilde{X}_1'), y) - l(\widehat{m}(\widetilde{X}_1), y)$) for a null coordinate ($j=0$). The data is sampled from $y = 0.8X^1 + 0.6X^2 + 0.4X^3 + 0.2X^4 + \sin(X^1) + \epsilon$, with $\epsilon \sim \mathcal{N}(0, 0.5)$. We use $n = 2000$ samples with $X \sim \mathcal{N}(0, \Sigma)$, where $\Sigma^{i,j} = 0.5^{|i-j|}$. $\widehat{\nu}$ is a linear model.
 }
    \label{fig:DR_main}
\end{figure}
%
\section{Experiments}

We compare Variable Importance Measures (VIM), such as CFI~\cite{Strobl2008}, with valid tests given by the square-root additive correction \texttt{SCPI(1)\_sqrt} and Wilcoxon test \texttt{SCPI(1)\_Wcx} from~\cite{reyerolobo2025conditionalfeatureimportancerevisited}. We also consider the SAGE value function~\cite{covert2020understanding} with the correction term \texttt{SCPI(100)\_sqrt}, and LOCO~\cite{lei2018} with extra data splitting \texttt{LOCO-W}~\cite{williamson2023general}, the additive correction \texttt{LOCO\_sqrt}~\cite{LocoVSShapley}, or the Wilcoxon test. From Variable Selection, we include dCRT~\cite{liu2022fast} and HRT~\cite{Tansey02012022}. A summary of the methods, references and guarantees in given in Table \ref{tab:summary_CIT} in the Supplementary Materials.

We present only the Semi-knockoffs with the Wilcoxon test, \texttt{SKO\_Wcx}, since it is typically more powerful than the sign test. Moreover, it is possible to \textit{Rao--Blackwellize} the procedure by performing multiple permutations per sample instead of only one; such derandomization strategies have proven beneficial in sequential randomized tests~\cite{shaer23a}. In our experiments, we already observe a clear gain when applying 5 permutations (\texttt{SKO\_Wcx\_p5}). 

We report power, type-I error control, and AUC, and provide time analyses together with additional results for other models and settings in Appendix~\ref{app:exps}. FDR control results are presented in Appendix~\ref{app:fdr_exp}, leading to similar conclusions regarding the power gain obtained by avoiding the train--test split, as well as an additional gain from using the knockoff threshold instead of applying a BH correction to $p$-values.

All experiments are repeated 50 times with $n = 300$ and $p = 50$. Similar conclusions were obtained for $n = 100, 200,$ and $400$, with a particularly clear power gain for smaller sample sizes due to avoiding the train--test split. The code is available at
\href{https://github.com/AngelReyero/loss_based_KO}{https://github.com/AngelReyero/loss\_based\_KO}.


\subsection{Simulated data}

In Figure~\ref{fig:main_adj_gb} we study a linear setting with \textit{adjacent support}. 
We first observe that VIMs are too restrictive to make discoveries: they typically rely on asymptotic guarantees, which leads some methods (e.g.\ \texttt{LOCO-W}) to fail to control the type-I error, and their reliance on decay rates results in low power. 
In contrast, variable selection methods enjoy finite-sample control and are substantially more powerful. 
Finally, we note a clear advantage in avoiding sample splitting, as the HRT is noticeably less powerful than methods that do not split the sample.
\begin{figure}
    \centering
    \hspace{-1.5em}%
    \includegraphics[width=0.5\textwidth]{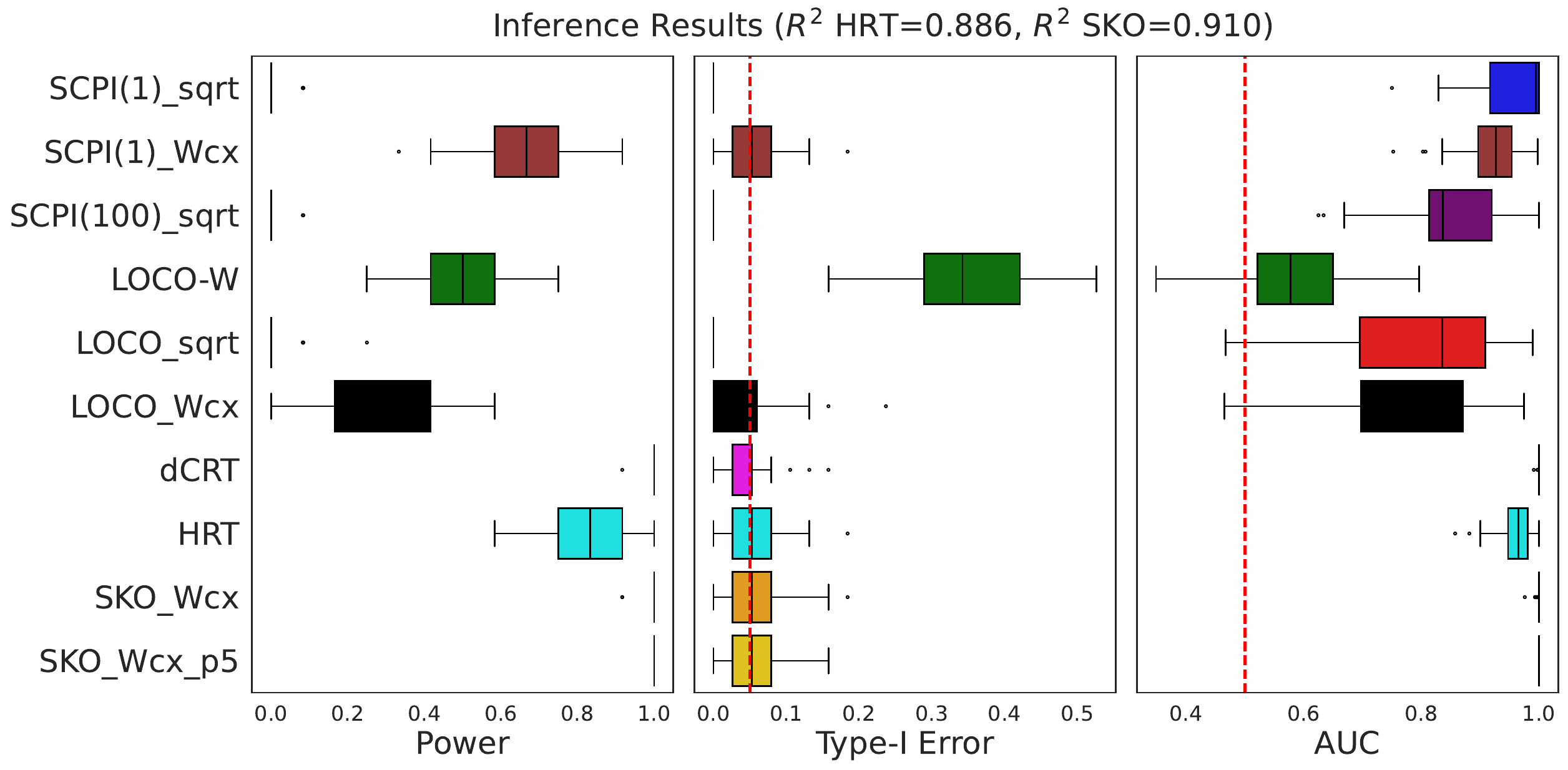}
    \caption{\textbf{Type-I error with adjacent support.} Let $X \sim \mathcal{N}(0,\Sigma)$ with $\Sigma^{ij} = 0.6^{|i-j|}$, and $y = \beta^\top X + \epsilon$, where the first $0.25p$ coordinates of $\beta$ lie in $[1,2]$ and the remaining are zero, with $\epsilon \sim \mathcal{N}(0,1)$. The black-box pretrained model $\widehat{m}$ is a gradient boosting, achieving $R^2 = 0.886$ with a train-test split and $R^2 = 0.91$ without split.
    }
    \label{fig:main_adj_gb}
\end{figure}
In Figure~\ref{fig:main_masked_nn} we study a \textit{masked correlation} setting in which an important feature is strongly correlated with a null feature. 
As before, we observe that the corrections used in VIMs are too restrictive, highlighting the need for valid testing procedures that can still accommodate arbitrary models. 
In contrast, the dCRT exhibits low power: its two-step feature-splitting strategy, used to speed up computation, prevents it from capturing all relevant interactions, and thus the first distillation generally remove most of the signal from the important feature. 
Finally, we observe a substantial gain from derandomization of the Semi-knockoffs: using only five permutations already yields the most powerful procedure.
\begin{figure}
    \centering
    \hspace{-1.5em}
    \includegraphics[width=0.5\textwidth]{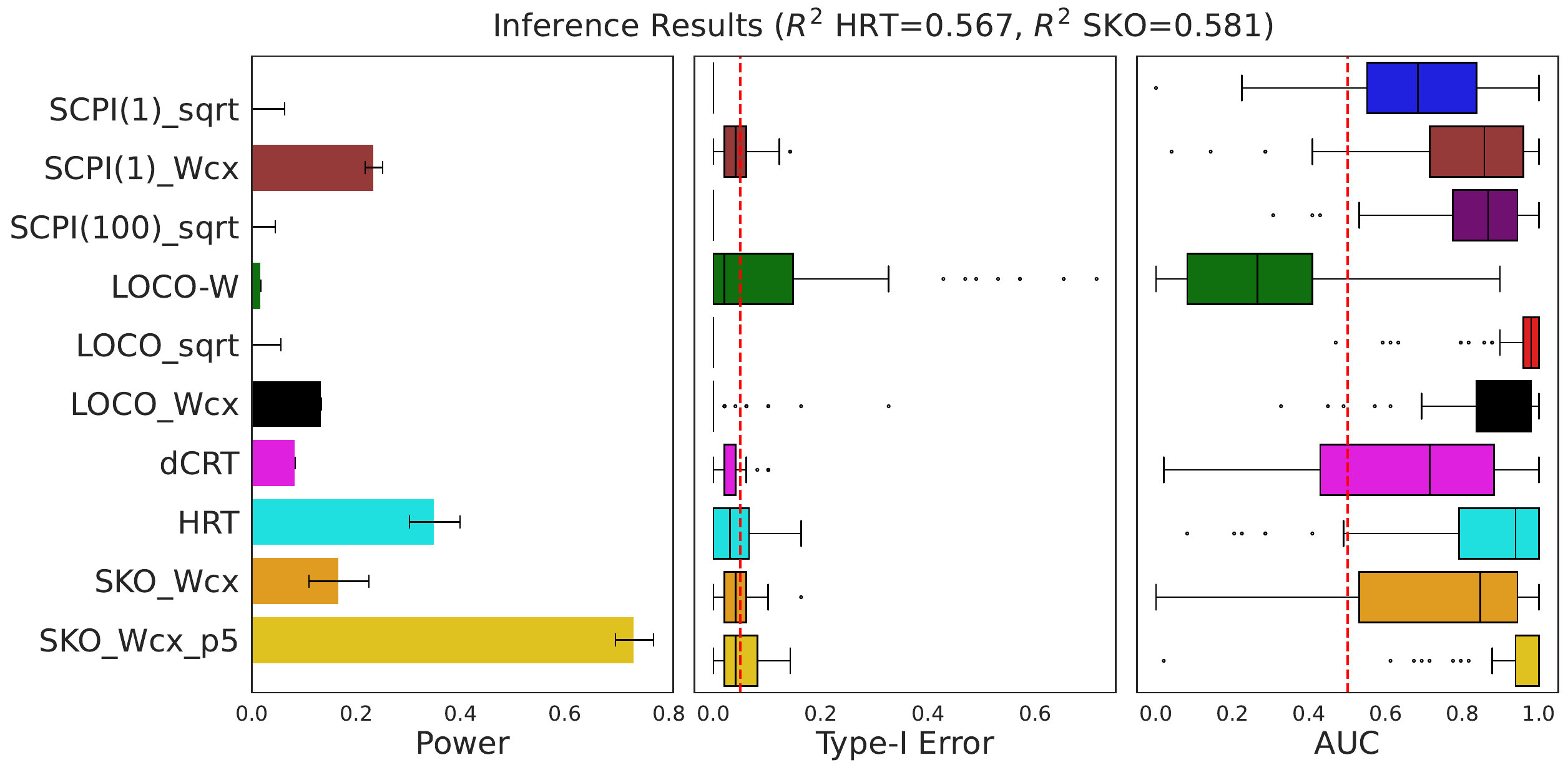}
    \caption{\textbf{Type-I error with masked correlation.} Let $X \sim \mathcal{N}(0,\Sigma)$ with $\Sigma_{ij} = 0.6^{|i-j|}$. A unique relevant coordinate $l$ is sampled and $y = X_l + 0.5\,\epsilon_1$, where $\epsilon_1 \sim \mathcal{N}(0,1)$. A correlated null variable is created as $X_{l-1} = X_l + 0.5\,\epsilon_2$, with $\epsilon_2 \sim \mathcal{N}(0,1)$. The black-box pretrained model $\widehat{m}$ is a neural network, achieving $R^2 = 0.567$ with a train-test split and $R^2 = 0.581$ without split.
 }
    \label{fig:main_masked_nn}
\end{figure}

\subsection{Real data}
We also compare the methods on real data to evaluate their discovery capability. For this, we use the Wisconsin Breast Cancer dataset \cite{wdbc1995}; see Appendix~\ref{app:wdbc} for more details on the data and experiments. Since the true set of important features is unknown, we add an extra null feature correlated at $0.6$ with the original features. This allows us to estimate the type-I error in Figure~\ref{fig:main_real} using this artificial feature, indicated on the right of each plot. We present results using Random Forests, Neural Networks, and Gradient Boosting. Similar to the simulated data, the proposed method can make discoveries while controlling the error; derandomization slightly increases power. Importantly, it exhibits stable power across models and consistent numbers of discoveries, unlike other methods whose results vary significantly with the random seed.

\begin{figure}
    \centering
    \hspace{-1.5em}
    \includegraphics[width=0.5\textwidth]{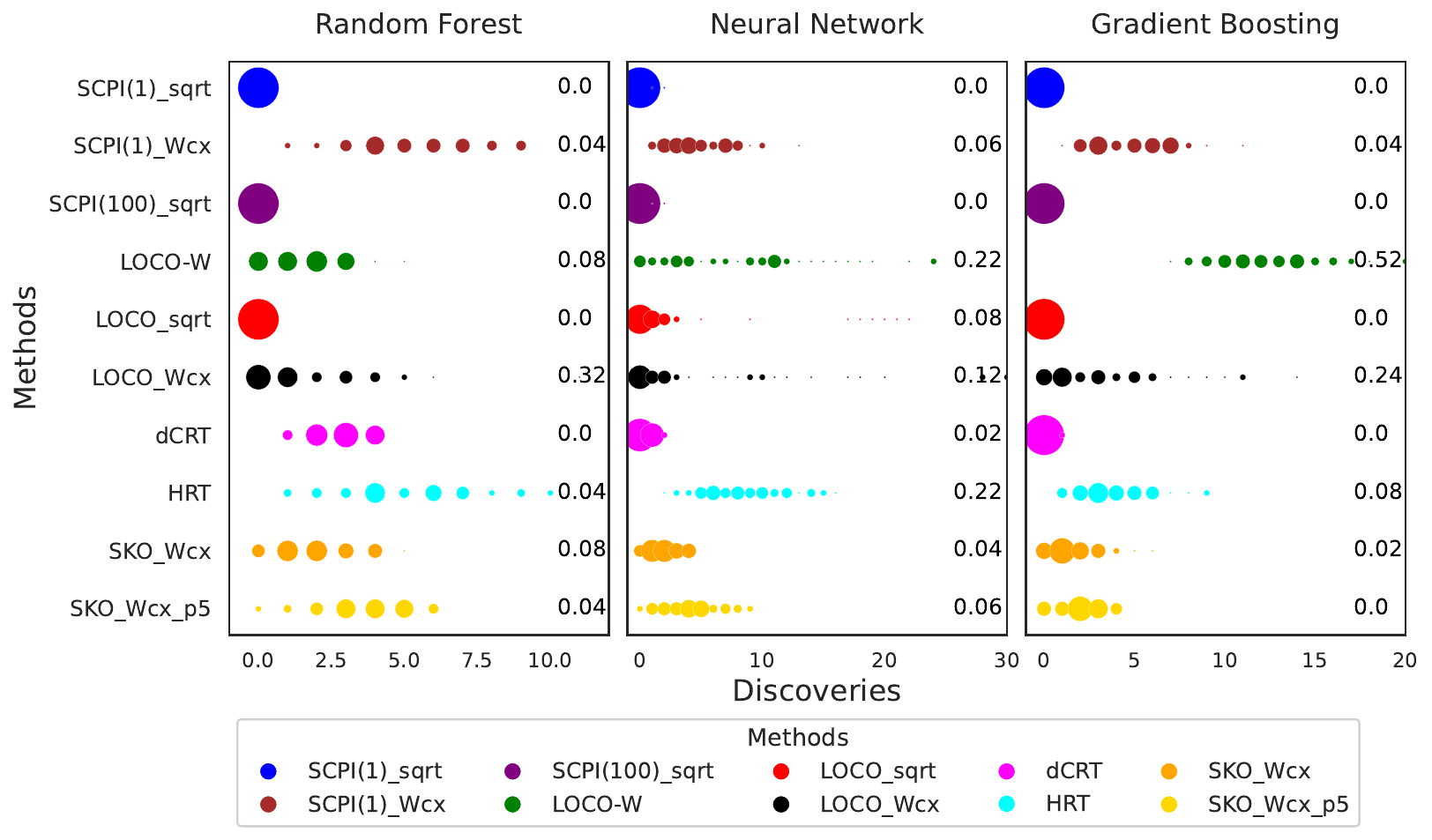}
    \caption{\textbf{Discoveries on real data across ML models.} The type-I error (shown to the right of each plot) is estimated using an artificial null feature correlated at $0.6$ with the original features. Semi-knockoffs make discoveries while controlling the error.
}
    \label{fig:main_real}
\end{figure}




\section{Discussion}

We have presented \textit{Semi-knockoffs}, a framework for CIT that accommodates any pretrained model and avoids train-test splitting. This is achieved by constructing a test statistic that is symmetric under the null through the creation of two populations: one that incorporates information from the response and one that does not. As a result, the dependence between the model and the distribution under test does not need to be broken; the model is used only as an importance score, similarly to how lasso coefficients are used. This allows us to leverage the  powerful knockoff thresholding procedure without requiring the stringent structural constraints needed to construct knockoff variables and statistics. Moreover, it provides both type-I error and FDR control.

One limitation may come from using a simple model for the imputer incorporating the response. While this remains valid, it may fail to capture richer relationships and break the symmetry with the other imputer under the alternative. Future work will investigate how such imputations and other ways of exploiting information in the response can be combined to further increase statistical power, and whether more complex imputation models can still control the error in practice via double-robustness guarantees similar to those established in this paper. Importantly, note that in the context of missing-data imputation, sampling strategies that condition on both covariates and the response have recently been explored and they have shown substantial improvements~\cite{DAgostinoMcGowan2024}. 

\section*{Acknowledgements}

This work benefited from state aid managed by the Agence Nationale de la Recherche ANR-23-CE23-0016 and the H2020 Research Infrastructures Grant EBRAIN-Health 101058516.

\section*{Impact Statement}

This paper presents work whose goal is to advance the field of Machine
Learning. There are many potential societal consequences of our work, none
which we feel must be specifically highlighted here.

\bibliography{biblio}
\bibliographystyle{icml2026}

\newpage
\appendix
\onecolumn

\section{Notation Glossary}\label{sect:glossary}

\begin{longtable}{p{4cm} p{12cm}}
\textbf{Symbol} & \textbf{Description} \\
\hline
$X\in \mathbb{R}^p$ & Input\\
$X^j\in \mathbb{R}$ & $j$-th input covariate\\
$X^{-j}\in \mathbb{R}^{p-1}$ & Input with the $j$-th covariate excluded\\
$y\in \mathbb{R}$ & Output\\
$x_i$ & $i$-th individual\\
$x_i^{(j)}$ & $i$-th individual with permuted $j$-th covariate\\
$m(X)$ (resp. $m_{-j}(X^{-j})$) & $\e{y\mid X}$ (resp. $\e{y\mid X^{-j}}$)\\
$\widehat{m}$ (resp. $\widehat{m}_{-j}$) & Estimation of $m$ (resp. of $m_{-j}$) \\
$\nu_{j}(X^{-j})$ & $\e{X^j\mid X^{-j}}$\\
$\widehat{\nu}_{j}$ & Estimation of $\nu_{j}$\\
$\rho_{j}(X^{-j}, y)$ & $\e{X^j\mid X^{-j}, y}$\\
$\widehat{\rho}_{j}$ & Estimation of $\rho_{j}$\\
$\widetilde{X}^{(j)}_1$ & Semi-knockoff variable drawn using $\nu_j$\\
$\widetilde{X}^{(j)}_2$ & Semi-knockoff variable drawn using $\rho_j$\\
$\widetilde{X}^{'(j)}_1$ & Semi-knockoff variable drawn using $\widehat{\nu}_j$\\
$\widetilde{X}^{'(j)}_2$ & Semi-knockoff variable drawn using $\widehat{\rho}_j$\\
$\hat{P}_1^j$ & Distribution of $\widetilde{X}^{'(j)}_1$\\
$\hat{P}_2^j$ & Distribution of $\widetilde{X}^{'(j)}_2$\\
$\ell$ & Loss function\\
$\mathcal{W}_1$ & 1-Wasserstein distance\\
$\mathcal{L}$ & Likelihood\\
$T_q$ & Knockoff threshold\\
$W^j_\mathrm{SKO}$ (resp. $\hat{W}^j_\mathrm{SKO}$) & Oracle (resp. Practical) Semi-knockoff statistic\\
$S_\mathrm{SKO}$ (resp. $\hat{S}_\mathrm{SKO}$) & Oracle (resp. Practical) Semi-knockoff selected features\\
$R_n$ & Regularised empirical risk\\
$O_P(a_n)$ & Bounded in probability at a rate of $a_n$. 
\end{longtable}

The superscript $(j)$ is omitted to avoid index overload when $j$ can be inferred from the context. 

\section{Explicit definitions}\label{sect:LOCO_CFI_MX}

For the sake of space, in this section we provide explicit definitions of the procedures that were introduced in Section~\ref{sect:related_work}.

\subsection{Leave One Covariate Out (LOCO)}

LOCO consists of assessing the importance of a feature by studying the performance drop when the model is trained without it ($\widehat{m}_{-j}$).

\begin{definition}[LOCO] Given $j$, a loss $\ell$, a regressor $\widehat{m}$ of $y$ given $X$, a regressor $\widehat{m}_{-j}$ of $y$ given $X^{-j}$ and a test set $(X_i, y_i)_{i=1,\ldots n_\mathrm{test}}$, LOCO is defined as
\begin{align*}
    \widehat{\psi}_\mathrm{LOCO}^j=\frac{1}{n_{\mathrm{test}}}\sum_{i=1}^{n_{\mathrm{test}}}\ell\left(\widehat{m}_{-j}(x_{i}^{-j}), y_i\right)-\ell\left(\widehat{m}(x_{i}),y_i)\right).
\end{align*}
\end{definition}

\subsection{Conditional Feature Importance (CFI)}\label{app:CFI}

CFI consists of reusing the same model while restricting the information from the feature by breaking its link with the output, while preserving its dependence with the rest of the input to avoid extrapolation.

\begin{definition}[CFI] Given $j$, a loss $\ell$, a regressor $\widehat{m}$ of $y$ given $X$ and a test set $(X_i, y_i)_{i=1,\ldots n_\mathrm{test}}$, CPI is defined as
\begin{align*}
    \widehat{\psi}_{\mathrm{CPI}}^j=\frac{1}{n_{\mathrm{test}}}\sum_{i=1}^{n_{\mathrm{test}}}\ell\left(\widehat{m}(\widetilde{x}_{i}'^{(j)}),y_i\right)-\ell\left(\widehat{m}(x_{i}), y_i\right),
\end{align*}
where the $j$-th coordinate is \textit{conditionally} permuted.
\end{definition}

\subsection{Knockoff definition}\label{append:Knockoff_def}

Knockoffs~\cite{candes2018panning} provides a framework for studying conditional independence testing with FDR control by creating synthetic features that appear feasible but break the link with the output, and then comparing the performance between the knockoff features and the original ones. More formally, it relies on \textit{knockoff variables} $\widetilde{X}$ (Appendix~\ref{app:ko_variables}), which do not preserve the relationship with $y$ but follow the distribution of $X$, a \textit{knockoff statistic} $W$ (Appendix~\ref{app:ko_stat}) that assigns an importance to the feature, and the \textit{knockoff threshold} $T_q$~\eqref{eq:knockoff-threshold}.

\subsubsection{Knockoff variables}\label{app:ko_variables}

\begin{definition}[Model-X knockoffs] For a random variable $X$, it is a new random variable $\widetilde{X}$ constructed satisfying the following two properties:
\begin{enumerate}
    \item For any subset $s\subset \{1,\ldots,p\}$, the original and the knockoff variables are exchangeable, i.e. $(X, \widetilde{X})_{\mathrm{swap}(s)}\overset{d}{=}(X, \widetilde{X})$.
    \item $\widetilde{X}\indep y|X$.
\end{enumerate}\label{def:Model-X_knockoffs}
\end{definition}

We observe that the second property can be obtained by constructing the knockoffs without using the output. Moreover, these imitation variables must satisfy the exchangeability condition that is stronger than merely sampling from the conditional distribution, as it must hold over the joint distribution rather than feature-wise. Consequently, computing knockoffs in practice requires a sequential procedure and cannot be parallelized, in contrast to the CRT. While many methods have been proposed for generating knockoff variables in practice~\cite{sesia2019gene, romano2020deep, bates2021metropolized}, and recent work has established asymptotic FDR control~\cite{fan2025asymptoticfdrcontrolmodelx}, concerns have been raised regarding their finite-sample validity~\cite{blain2025knockoffsfaildiagnosingfixing}.

\subsubsection{Knockoff statistics}\label{app:ko_stat}

\begin{definition}[Feature statistics from \cite{candes2018panning}] It is a vector $\mathbf{W}=(W_1, \ldots, W_p)\in \mathbb{R}^p$ where each coordinate $W_j$ satisfies
\begin{enumerate}
    \item It is a function of the input $X$, the knockoff $\widetilde{X}$ and the output $y$: $W_j=w_j\left(\left[X, \widetilde{X}\right], y\right)$.
    \item It satisfies the flip-sign property: for $s\subset[p]$,

\[
w_j\left(\left[X, \widetilde{X}\right]_{\mathrm{swap}(s)}, y\right) = 
\begin{cases} 
    w_j\left(\left[X, \widetilde{X}\right], y\right) & \text{if } j\cancel{\in} s,  \\
   -w_j\left(\left[X, \widetilde{X}\right], y\right) & \text{if } j\in s.
\end{cases}
\]
\end{enumerate}
\label{def:knock-statistic}
\end{definition}

This statistic provides a hint of how important the feature is. The most intuitive example is given by the Lasso Coefficient Difference (LCD), which, to assign importance, compares the coefficients of a Lasso regression of $y$ on both $(X, \widetilde{X})$, so that $W^j_{\mathrm{LCD}} := |\hat{\beta}^j| - |\hat{\beta}^{j+p}|$. Then, if the coordinate is important, there is more information in the coefficient for the original feature than for its knockoff counterpart.

\section{Algorithms}\label{sect:pseudocode}

We first present the theoretical Semi-knockoffs, which provide valid type-I error control through nonparametric paired tests (Wilcoxon or Sign test), and then describe the procedure for controlling FDR using the knockoff threshold. We note that, in practice, the procedure is identical, relying on the computation of the regressors $\widehat{\nu}$ and $\widehat{\rho}$ (Algorithm~\ref{alg:semi_knockoffs}).

\subsection{Type-I error algorithm (Semi-KO Sign test/Wilcoxon)}

\begin{algorithm}[H]
\caption{Oracle Semi-knockoffs Sign-Test/Wilcoxon (Semi-KO ST/Wilcox)}
\label{alg:semi_knockoffs_oracle_wcx}
\begin{algorithmic}[1]
\item[] \textbf{Input:} Model $\widehat{m}$, data $\{(X_i,y_i)\}_i^n$, significance level $\alpha$, a feature $j$, conditional expectations $\nu_j$ and $\rho_j $
    \item[] Compute theoretical residuals $\epsilon_{j,1} = X^j -\nu_j(X^{-j})$
    \item[] Compute theoretical residuals $\epsilon_{j,2} = X^j - \rho_j(X^{-j}, y)$
    \item[] Sample $\pi_{j,1}$ and $\pi_{j,2}$ permutations of $\{1,\dots,n\}$
    \item[] Construct 
    \begin{align*}
        \widetilde{X}^{(j)}_{1,i}
        &= \nu_j(X^{-j}_i)
        + \epsilon_{j,1,\pi_{j,1}(i)}\\
        \widetilde{X}^{(j)}_{2,i}
        &= \rho_j(X^{-j}_i, y_i)
        + \epsilon_{j,2,\pi_{j,2}(i)}
    \end{align*}
    \item[] Compute nonparametric paired test (Sign-test/Wilcoxon) between 
    \[
        \left\{l\bigl(\widehat{m}(\widetilde{X}^{(j)}_{1,i}), y_i\bigr)\right\}
        \text{ and }
        \left\{l\bigl(\widehat{m}(\widetilde{X}^{(j)}_{2,i}), y_i\bigr)\right\}
    \]
\item[] \textbf{Return} the computed p-value
\end{algorithmic}
\label{alg:theo-semi-KO-wilcox}
\end{algorithm}

\subsection{Oracle Semi-knockoffs}

\begin{algorithm}[H]
\caption{Oracle Semi-knockoffs (Semi-KO)}
\label{alg:semi_knockoffs_oracle}
\begin{algorithmic}[1]
\item[] \textbf{Input:} Model $\widehat{m}$, data $\{(X_i,y_i)\}_i^n$, significance level $q$, conditional expectations $\nu_j$ and $\rho_j $ for every feature $j$
\item[] \textbf{For} $j = 1,\dots,p$
    \item[] \quad Compute theoretical residuals $\epsilon_{j,1} = X^j -\nu_j(X^{-j})$
    \item[] \quad Compute theoretical residuals $\epsilon_{j,2} = X^j - \rho_j(X^{-j}, y)$
    \item[] \quad Sample $\pi_{j,1}$ and $\pi_{j,2}$ permutations of $\{1,\dots,n\}$
    \item[] \quad Construct 
    \begin{align*}
        \widetilde{X}^{(j)}_{1,i}
        &= \nu_j(X^{-j}_i)
        + \epsilon_{j,1,\pi_{j,1}(i)}\\
        \widetilde{X}^{(j)}_{2,i}
        &= \rho_j(X^{-j}_i, y_i)
        + \epsilon_{j,2,\pi_{j,2}(i)}
    \end{align*}
    \item[] \quad Compute the statistic 
    \[
    \quad W^j_\mathrm{SKO} = \frac{1}{n} \sum_{i=1}^{n}
    \Bigl( 
        l\bigl(\widehat{m}(\widetilde{X}^{(j)}_{1,i}), y_i\bigr)
        - 
        l\bigl(\widehat{m}(\widetilde{X}^{(j)}_{2,i}), y_i\bigr)
    \Bigr)
    \]
\item[] Compute knockoff threshold $T_q$ as in \eqref{eq:knockoff-threshold}
\item[] \textbf{Return} 
\[
S_{\mathrm{SKO}}
=
\bigl\{
j : W^j_\mathrm{SKO} \ge T_q
\bigr\}.
\]
\end{algorithmic}
\label{alg:theo-semi-KO}
\end{algorithm}

\subsection{Semi-knockoffs}

\begin{algorithm}[H]
\caption{Semi-knockoffs (SKO)}
\label{alg:semi_knockoffs}
\begin{algorithmic}[1]
\item[] \textbf{Input:} Model $\widehat{m}$, data $\{(X_i,y_i)\}_i^n$, significance level $q$
\item[] \textbf{For} $j = 1,\dots,p$
    \item[] \quad \textbf{(Regression 1)} Fit $\widehat{\nu}_j \approx \mathbb{E}[X^j \mid X^{-j}]$
    \item[] \quad \textbf{(Regression 2)} Fit $\widehat{\rho}_j \approx \mathbb{E}[X^j \mid X^{-j}, y]$
    \item[] \quad Compute residuals $\widehat{\epsilon}_{j,1} = X^j - \widehat{\nu}_j(X^{-j})$
    \item[] \quad Compute residuals $\widehat{\epsilon}_{j,2} = X^j - \widehat{\rho}_j(X^{-j}, y)$
    \item[] \quad Sample $\pi_{j,1}$ and $\pi_{j,2}$ permutations of $\{1,\dots,n\}$
    \item[] \quad Construct 
    \begin{align*}
        \widetilde{X}'^{(j)}_{1,i}
        &= \widehat{\nu}_j(X^{-j}_i)
        + \widehat{\epsilon}_{j,1,\pi_{j,1}(i)}\\
        \widetilde{X}'^{(j)}_{2,i}
        &= \widehat{\rho}_j(X^{-j}_i, y_i)
        + \widehat{\epsilon}_{j,2,\pi_{j,2}(i)}
    \end{align*}
    \item[] \quad Compute the statistic 
    \[
    \quad\hat{W}^j_\mathrm{SKO} = \frac{1}{n} \sum_{i=1}^{n}
    \Bigl( 
        l\bigl(\widehat{m}(\widetilde{X}'^{(j)}_{1,i}), y_i\bigr)
        - 
        l\bigl(\widehat{m}(\widetilde{X}'^{(j)}_{2,i}), y_i\bigr)
    \Bigr)
    \]
\item[] Compute knockoff threshold $T_q$ as in \eqref{eq:knockoff-threshold}
\item[] \textbf{Return} 
\[
\widehat{S}_{\mathrm{SKO}}
=
\bigl\{
j : \hat{W}^j_\mathrm{SKO} \ge T_q
\bigr\}.
\]
\end{algorithmic}
\label{alg:semi-KO}
\end{algorithm}
\section{Mathematical tools}

In this section, we present some mathematical tools and results from high-dimensional probability that were used in our analysis. They are mainly obtained from \citet{Vershynin_2018}. The first definitions concern the sub-Gaussian and sub-exponential norms:

\begin{definition}[Subgaussian norm]\label{def:subgaussian_norm}
$\|X\|_{\psi_2}=\mathrm{inf}\{K>0, \e{\mathrm{exp}X^2/K^2}\leq 2\}$.
\end{definition}

\begin{definition}[Subexponential norm]
$\|X\|_{\psi_1}=\mathrm{inf}\{K>0, \e{\mathrm{exp}|X|/K}\leq 2\}$.
\end{definition}

Also, Lemma 2.8.6 from \citet{Vershynin_2018} gives that the product of subgaussians is subexponential.
\begin{lemma}[Subgaussian $\times$ subgaussian = subexponential]\label{lemm:subG_x_subG} If $X$ and $Y$ are subgaussian, then $XY$ is subexponential, and 
$\|XY\|_{\psi_1}\leq \|X\|_{\psi_2}\|Y\|_{\psi_2}$.

\end{lemma}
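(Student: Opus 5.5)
The statement to prove is Lemma~\ref{lemm:subG_x_subG}: if $X$ and $Y$ are subgaussian, then $\|XY\|_{\psi_1}\le \|X\|_{\psi_2}\|Y\|_{\psi_2}$. The approach is to normalize, use Young's inequality to split the exponential, and finish with Cauchy--Schwarz (or AM--GM) applied to the two subgaussian moment generating functions.

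First dispose of the degenerate cases. If $\|X\|_{\psi_2}=0$ then $X=0$ almost surely, hence $XY=0$ and the claim is trivial; the same holds for $Y$.

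Otherwise, by homogeneity of both norms, we may rescale so that $\|X\|_{\psi_2}=\|Y\|_{\psi_2}=1$. Since $\|XY\|_{\psi_1}$ is an infimum, it then suffices to show
\[
\mathbb{E}\bigl[\exp(|XY|)\bigr]\le 2 .
\]

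The norm is attained: by monotone convergence as $K\downarrow\|X\|_{\psi_2}$, we get $\mathbb{E}[\exp(X^2)]\le 2$, and likewise $\mathbb{E}[\exp(Y^2)]\le 2$.

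Next apply Young's inequality $|ab|\le a^2/2+b^2/2$. This gives
\[
\exp(|XY|)\le \exp\!\left(\tfrac{X^2}{2}\right)\exp\!\left(\tfrac{Y^2}{2}\right).
\]

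Then use the inequality $uv\le (u^2+v^2)/2$ with $u=\exp(X^2/2)$ and $v=\exp(Y^2/2)$, and take expectations:
\[
\mathbb{E}\bigl[\exp(|XY|)\bigr]\le \tfrac12\,\mathbb{E}\bigl[\exp(X^2)\bigr]+\tfrac12\,\mathbb{E}\bigl[\exp(Y^2)\bigr]\le \tfrac12\cdot 2+\tfrac12\cdot 2=2 .
\]
This is exactly the required bound. Applying Cauchy--Schwarz instead gives the same result.

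Undoing the normalization yields $\|XY\|_{\psi_1}\le \|X\|_{\psi_2}\|Y\|_{\psi_2}$. The only subtle step is justifying that the infimum in the definition of the subgaussian norm is attained, which is where monotone convergence is needed; everything else is elementary.
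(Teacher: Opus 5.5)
Your proof is correct and is essentially the argument the paper relies on. The paper gives no proof of its own; it cites Lemma~2.8.6 of Vershynin, whose proof is exactly your route: normalize to unit $\psi_2$ norms, apply Young's inequality $|XY|\le X^2/2+Y^2/2$, then use $uv\le (u^2+v^2)/2$. Your handling of the degenerate case and your monotone-convergence argument that the infimum is attained supply details the standard proof leaves implicit.
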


Similarly to sub-Gaussian random variables, there is an analogous Bernstein-type result for sub-exponential variables (Theorem 2.9.1 in \citet{Vershynin_2018}):
\begin{theorem}[Subexponential Bernstein]\label{th:bernstein} Let $X_1, \ldots, X_n$ independent, mean zero subexponential random variables. Then, for every $t\geq 0$, 
$$\mathbb{P}\left(\left|\sum_{i=1}^n X_i\right|\geq t\right)\leq 2\mathrm{exp}\left(-c\mathrm{min}\left(\frac{t^2}{\sum_{i=1}^n\|X_i\|^2_{\psi_1}}, \frac{t}{\mathrm{max}_i\|X_i\|_{\psi_1}}\right)\right)$$
where $c>0$ is an absolute constant.
\end{theorem}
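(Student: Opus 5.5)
The plan is the standard Chernoff (exponential Markov) argument. The one nontrivial ingredient is a local bound on the moment generating function of a centered subexponential variable. Write $K_i := \|X_i\|_{\psi_1}$ and $S := \sum_{i=1}^n X_i$. I will bound $\mathbb{P}(S \geq t)$ and then apply the same bound to $-X_1,\ldots,-X_n$, which have the same $\psi_1$-norms. A union bound over the two tails produces the factor $2$ in the statement.

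\emph{Step 1 (MGF bound).} I will show that if $X$ is mean zero with $\|X\|_{\psi_1} = K$, then
\[
\mathbb{E}\bigl[e^{\lambda X}\bigr] \leq \exp\bigl(4\lambda^2 K^2\bigr) \quad \text{for all } |\lambda| \leq \frac{1}{2K}.
\]
First, the definition of the subexponential norm together with Markov's inequality gives $\mathbb{P}(|X| > u) \leq 2 e^{-u/K}$. Integrating this tail bound yields the moment bound $\mathbb{E}|X|^p \leq 2\, p!\, K^p$ for every $p \geq 1$. Next, expand $e^{\lambda X}$ in its Taylor series. The linear term vanishes because $X$ is centered, so
\[
\mathbb{E}\bigl[e^{\lambda X}\bigr] \leq 1 + \sum_{p \geq 2} \frac{|\lambda|^p\, \mathbb{E}|X|^p}{p!} \leq 1 + 2\sum_{p \geq 2} (|\lambda| K)^p = 1 + \frac{2(\lambda K)^2}{1 - |\lambda| K}.
\]
When $|\lambda| K \leq 1/2$ the right-hand side is at most $1 + 4\lambda^2 K^2$, which is at most $\exp(4\lambda^2 K^2)$. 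This step is where the actual content sits, and I expect it to be the main (mild) obstacle. The point to handle carefully is that, unlike the subgaussian case, the MGF is controlled only in a neighborhood of $0$ of width of order $1/K$, and the two-regime minimum in the final bound comes entirely from this restriction.

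\emph{Step 2 (Chernoff).} Take $0 < \lambda \leq 1/(2\max_i K_i)$. By independence and Step 1,
\[
\mathbb{P}(S \geq t) \leq e^{-\lambda t} \prod_{i=1}^n \mathbb{E}\bigl[e^{\lambda X_i}\bigr] \leq \exp\Bigl(-\lambda t + 4\lambda^2 \sigma^2\Bigr), \qquad \sigma^2 := \sum_{i=1}^n K_i^2.
\]

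\emph{Step 3 (optimize over $\lambda$).} The unconstrained minimizer of the exponent is $\lambda^* = t/(8\sigma^2)$, so I choose $\lambda = \min\bigl(t/(8\sigma^2),\, 1/(2\max_i K_i)\bigr)$ and treat two cases.
\begin{itemize}
\item If $\lambda^*$ is admissible, the exponent equals $-t^2/(16\sigma^2)$.
\item Otherwise $\lambda = 1/(2\max_i K_i) < t/(8\sigma^2)$, so $4\lambda^2\sigma^2 \leq \lambda t/2$. The exponent is then at most $-\lambda t/2 = -t/(4\max_i K_i)$.
\end{itemize}
In both cases $\mathbb{P}(S \geq t) \leq \exp\bigl(-c\min\bigl(t^2/\sigma^2,\, t/\max_i K_i\bigr)\bigr)$ with $c = 1/16$. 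Combining this with the symmetric bound for the lower tail gives the stated inequality.
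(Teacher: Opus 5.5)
Your proposal is correct and takes essentially the same approach as the paper, which does not prove this result itself but imports it from Vershynin (2018, Theorem 2.9.1). Your argument is the standard proof given there: a local MGF bound $\mathbb{E}[e^{\lambda X}]\le \exp(4\lambda^2K^2)$ for $|\lambda|\le 1/(2K)$, then a Chernoff bound, then the two-regime choice of $\lambda$, here with the explicit constant $c=1/16$.
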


\section{Proofs}
\subsection{Semi-knockoffs type-I error control: Theorem \ref{thm:type-I-control}}

\begin{proof}
Since the algorithm relies on a nonparametric test between two populations, the proof reduces to showing that under the null hypothesis, i.e.\ $X^j \indep y \mid X^{-j}$, both sample distributions come from the same population. This follows because conditional independence implies, in particular, independence of the first-order moment. Therefore, both $\rho$ and $\nu$ coincide:
\[
\rho_j(X^{-j}, y)
= \mathbb{E}[X^j \mid X^{-j}, y]
= \mathbb{E}[X^j \mid X^{-j}]
= \nu_j(X^{-j}).
\]

In particular, we have that
\[
l(\widehat{m}(\widetilde{X}^{(j)}_1), y)
\overset{\mathrm{d}}{=}
l(\widehat{m}(\widetilde{X}^{(j)}_2), y).
\]
\end{proof}

\subsection{Semi-knockoffs FDR control}

\begin{proof}
 To control the FDR, the only thing that we need to prove is the exchangeability. Once this is proved, we can conclude using the same proofs as in Theorem 1 and 2 from \cite{barber-candes2015}. For this proof, we include $2p$ i.i.d uniform random variables $U_1, \ldots U_p, U'_1, \ldots U'_p\sim \mathcal{U}\{1, \ldots, n\}$ where $U_j$ (resp. $U'_j$) represents the choice of the permutation that is done for the $\widetilde{X}^{(j)}_1$ (resp. for the $\widetilde{X}^{(j)}_2$). We observe that this is indeed correct since each sampling for each residual is done independently. Then, we denote by $(X_{U_j}, y_{U_j})$ the sample used for the residual of $\widetilde{X}^{(j)}_1$ and similarly by $(X_{U'_j}, y_{U'_j})$ for $\widetilde{X}^{(j)}_2$.
 
 Also, we note that under the null hypothesis, since $X^j\indep y\mid X^{-j}$, we have that $\rho_j(X^{-j}, y):=\e{X^{j}\mid X^{-j}, y}=\e{X^{j}\mid X^{-j}}=: \nu_j(X^{-j})$. Thus, we have that for $j\in \mathcal{H}_0$:
 
 \begin{align*}
     \widetilde{X}^{(j)}_1&:=\nu_{j}(X^{-j})+\left(X^j_{U_j}-\nu_{j}(X^{-j}_{U_j})\right)\\
     &=\rho_{j}(X^{-j}, y)+\left(X^j_{U_j}-\rho_{j}(X^{-j}_{U_j}, y_{U_j})\right)\\
     &\overset{\mathrm{d}}{=}\rho_{j}(X^{-j}, y)+\left(X^j_{U'_j}-\rho_{j}(X^{-j}_{U'_j}, y_{U'_j})\right)\\
     &= :\widetilde{X}^{(j)}_2.
 \end{align*}

Finally, we note that the statistic vector provided by the Semi-knockoffs, $\mathbf{W}_\mathrm{SKO}$, has the same distribution if we change the sign of any coordinate $j \in \mathcal{H}_0$:

 \begin{align*}
     \mathbf{W}_\mathrm{SKO}&:=\left(l\left(\widehat{m}\left(\widetilde{X}^{(1)}_1 \right), y\right)-l\left(\widehat{m}\left(\widetilde{X}^{(1)}_2 \right), y\right), \ldots, l\left(\widehat{m}\left(\widetilde{X}^{(j)}_1 \right), y\right)-l\left(\widehat{m}\left(\widetilde{X}^{(j)}_2 \right), y\right),\right.
     \\ &\left.\qquad \ldots, l\left(\widehat{m}\left(\widetilde{X}^{(p)}_1 \right), y\right)-l\left(\widehat{m}\left(\widetilde{X}^{(p)}_2 \right), y\right)\right)\\
     &\qquad\overset{\mathrm{d}}{=}\left(l\left(\widehat{m}\left(\widetilde{X}^{(1)}_1 \right), y\right)-l\left(\widehat{m}\left(\widetilde{X}^{(1)}_2 \right), y\right), \ldots, l\left(\widehat{m}\left(\widetilde{X}^{(j)}_2 \right), y\right)-l\left(\widehat{m}\left(\widetilde{X}^{(j)}_1 \right), y\right),\right.
     \\ &\left.\qquad \ldots, l\left(\widehat{m}\left(\widetilde{X}^{(p)}_1 \right), y\right)-l\left(\widehat{m}\left(\widetilde{X}^{(p)}_2 \right), y\right)\right).
 \end{align*}

  We observe that this can be done for any $j \in \mathcal{H}_0$ independently, since the permutations are independent. Thus, by applying the previous argument to all indices in this null set, we obtain exchangeability.
  
\end{proof}

\subsection{Null feature stability of the ERM}

In this section, we provide a proof for Theorem \ref{thm:opt_stab}. To do so, we begin by presenting the necessary assumptions in Section~\ref{sect:assump_stab}, followed by the proof in Section~\ref{sect:proof_stab}.

\subsubsection{Assumptions for the stability result}\label{sect:assump_stab}

Our first assumption concerns the regularity of the score function with respect to its first argument.

 \begin{assumption}[Lipschitz score]\label{ass:score_lipsch}
 The score is $L$-Lipschitz with respect to the predictions: 
 $$|s(u_1, y)-s(u_2, y)|\leq L\|u_1-u_2\|.$$
 \end{assumption}
 
 We easily observe that this is achieved for instance for the quadratic loss and also for the cross-entropy loss by bounding the probability of belonging to a class by an $0<a<b<1$. 
 
We assume that the input matrix as well as the score are subgaussian, i.e., their subgaussian norms (see Definition~\ref{def:subgaussian_norm}) are finite.
 
 \begin{assumption}[Design]\label{ass:design_mat}
  $\chi$ is subgaussian with covariance $\Sigma$ with bounded operator norm.
 \end{assumption}

 \begin{assumption}[Score]\label{ass:score_subgau}
  The score in $\theta^\star$ given by $s_i^\star:=s_i(\theta^\star)=\partial_u l({\theta^\star}^\top \chi_i, z_i)$ is subgaussian.
 \end{assumption}
 
This is, for example, satisfied when using the quadratic loss together with subgaussian input features and output.

Finally, we assume that the population minimizer achieves a zero expected derivative of the loss, as it is the minimizer of the population risk.
 
 \begin{assumption}[Population minimizer]\label{ass:pop_minim}
  $\e{\partial_u l(\mu(\chi), z)\mid \chi}=0$ for $\mu(\chi)=\e{z\mid \chi}$.
 \end{assumption}

In Lemma~\ref{lemm:population_minim} we verify this assumption for the quadratic loss and the cross-entropy loss. 
More generally, this property holds for many other losses when choosing the corresponding population minimizer, 
since such minimizers are typically defined to satisfy this condition whenever the loss is differentiable 
(and even in the non-differentiable case, an analogous statement can be made using subgradients; however, 
we restrict our presentation to the simpler differentiable setting).

 \begin{lemma}\label{lemm:population_minim}
 Denote by $\mu(\chi):=\e{z\mid \chi}$. Then, $\e{\partial_u l(\mu(\chi), z)\mid \chi}=0\;$ when $l$ is the quadratic loss or the cross entropy.
 \end{lemma}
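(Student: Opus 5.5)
The plan is to show, for each loss separately, that the conditional expectation $\mathbb{E}[z\mid \chi]$ is exactly the point where the conditional expected derivative vanishes. In both cases $\partial_u l(u,z)$ is affine in $z$ for fixed $u$. So the conditional expectation given $\chi$ can be pushed inside, replacing $z$ by $\mu(\chi)$. The key fact is that $\mu(\chi)$ is $\sigma(\chi)$-measurable, so it behaves as a constant when conditioning on $\chi$.

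\emph{Quadratic loss.} Take $l(u,z)=(u-z)^2$; a factor $1/2$ is irrelevant. Then $\partial_u l(u,z)=2(u-z)$, and
\[
\mathbb{E}[\partial_u l(\mu(\chi),z)\mid \chi]=2\bigl(\mu(\chi)-\mathbb{E}[z\mid\chi]\bigr)=0 .
\]
The only requirement is integrability of $z$, which already follows from the subgaussian assumptions stated above.

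\emph{Cross-entropy.} Here $z\in\{0,1\}$ and $\mu(\chi)=\mathbb{P}(z=1\mid\chi)$, with loss $l(u,z)=-z\log u-(1-z)\log(1-u)$. The derivative is
\[
\partial_u l(u,z)=-\frac{z}{u}+\frac{1-z}{1-u}=\frac{u-z}{u(1-u)} .
\]
This is affine in $z$ once $u$ is fixed, and its denominator is $\sigma(\chi)$-measurable when $u=\mu(\chi)$. Hence
\[
\mathbb{E}[\partial_u l(\mu(\chi),z)\mid\chi]=\frac{\mu(\chi)-\mathbb{E}[z\mid\chi]}{\mu(\chi)(1-\mu(\chi))}=0 .
\]

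The cross-entropy step is the only delicate point. Division requires $\mu(\chi)\in(0,1)$, which is guaranteed by the boundedness convention $0<a\le\mu\le b<1$ adopted after Assumption~\ref{ass:score_lipsch}. If the loss is instead parametrized through the logit $u=\operatorname{logit}(\mu)$, then $\partial_u l=\sigma(u)-z$ and the same computation applies with no denominators at all. I would state that remark too, so that the lemma matches the linear-predictor form $\theta^\top\chi$ used in \eqref{eq:reg-emp-risk}.
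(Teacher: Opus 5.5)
Your proof is correct and is essentially the paper's argument: for each loss you compute $\partial_u l(\mu(\chi),z)$, observe it is affine in $z$ with $\sigma(\chi)$-measurable coefficients, and substitute $\mathbb{E}[z\mid\chi]=\mu(\chi)$ to get zero. Your cross-entropy $-z\log u-(1-z)\log(1-u)$ is the correctly signed version; the paper's displayed loss has a sign inconsistency, and its stated derivative is the negative of yours, which does not affect showing the expectation vanishes. Your explicit requirement $\mu(\chi)\in(0,1)$ makes precise a condition the paper uses only implicitly.
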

 Overall, this could be generalized to other losses when picking other population minimizers, since in general they are constructed to exactly fulfil this condition when differentiable (even in this case it can be generalized via the subgradient, but we again prefer to keep the exposition simple).
 \begin{proof}
 We start with the quadratic loss, for which $\partial_u l(\mu(\chi), z)=2(\mu(\chi)-z)$. Therefore, we observe easily by definition of $\mu$ that 
 $$\e{\partial_u l(\mu(\chi), z)\mid \chi}=\e{2(\mu(\chi)-z)\mid \chi}=2(\mu(\chi)-\e{z\mid \chi})=0.$$
 
 Similarly, for the cross-entropy, which is given by $l(u, z)=z\mathrm{log}(u)-(1-z)\mathrm{log}(1-u)$, we observe that $\partial_u l(\mu(\chi), z)=\frac{z}{\mu(\chi)}-\frac{1-z}{1-\mu(\chi)}$. Thus, we conclude that 
  $$\e{\partial_u l(\mu(\chi), z)\mid \chi}=\e{\frac{z}{\mu(\chi)}-\frac{1-z}{1-\mu(\chi)}\mid \chi}=\frac{\e{z\mid \chi}}{\mu(\chi)}-\frac{1-\e{z\mid \chi}}{1-\mu(\chi)}=0.$$
 \end{proof}

\subsubsection{Proof of Optimization Stability: Theorem \ref{thm:opt_stab}}\label{sect:proof_stab}

 \begin{theorem}[Optimization stability]
For $j \in \mathcal{H}_0$, under the assumptions stated in Appendix~\ref{sect:assump_stab}, we have, with probability greater than $1-\delta$, that
 \begin{align}
     \|\widetilde{\theta}-\widehat{\theta}\|_2\leq O_P\left(\sqrt{\frac{\mathrm{log}(1/\delta)}{n}}\right).
 \end{align}
 \end{theorem}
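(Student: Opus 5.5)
The plan is to exploit the strong convexity that the ridge penalty gives to $R_n$ in \eqref{eq:reg-emp-risk}. If $u\mapsto l(u,z)$ is convex (as for the quadratic and cross-entropy losses), then $R_n$ is $2\lambda$-strongly convex on $\mathbb{R}^p$. Its minimizer $\widehat{\theta}$ therefore satisfies, for every $\theta$,
\[
\|\theta-\widehat{\theta}\|_2\leq \frac{1}{2\lambda}\,\|\nabla R_n(\theta)\|_2 .
\]
I would apply this at $\theta=\widetilde{\theta}^{j}=(0,\widehat{\theta^{-j}})$. Because $\widehat{\theta^{-j}}$ minimizes the restricted risk, the gradient coordinates indexed by $-j$ vanish at $\widetilde{\theta}^{j}$. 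The penalty term in coordinate $j$ is $2\lambda\cdot 0=0$. Hence only the $j$-th partial derivative survives:
\[
\|\widetilde{\theta}^{j}-\widehat{\theta}\|_2\leq \frac{1}{2\lambda}\Bigl|\frac{1}{n}\sum_{i=1}^n s_i(\widetilde{\theta}^{j})\,\chi_i^j\Bigr|,
\qquad s_i(\theta):=\partial_u l(\theta^\top\chi_i,z_i).
\]
This reduces the stability statement to a concentration bound for a single empirical score–feature correlation.

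Next I would split this quantity around a population reference point $\theta^\star$ that has a zero in coordinate $j$:
\[
\frac{1}{n}\sum_i s_i(\theta^\star)\chi_i^j\;+\;\frac{1}{n}\sum_i\bigl(s_i(\widetilde{\theta}^{j})-s_i(\theta^\star)\bigr)\chi_i^j .
\]
For the first term, the null hypothesis $j\in\mathcal{H}_0$ means that, in the notation of this section, the coordinate $\chi^j$ carries no information on $z$ given the other coordinates (for $\widehat{\rho}_j$ this is $y\indep X^j\mid X^{-j}$). So $\mu(\chi)=\mathbb{E}[z\mid\chi]$ does not depend on $\chi^j$, and I take ${\theta^\star}^\top\chi=\mu(\chi)$. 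Assumption~\ref{ass:pop_minim} and the tower property then give $\mathbb{E}[s^\star\chi^j]=\mathbb{E}\bigl[\chi^j\,\mathbb{E}[s^\star\mid\chi]\bigr]=0$. By Assumptions~\ref{ass:design_mat} and \ref{ass:score_subgau} together with Lemma~\ref{lemm:subG_x_subG}, each summand $s_i^\star\chi_i^j$ is i.i.d., mean zero and subexponential. Theorem~\ref{th:bernstein} then bounds the first term by $O\bigl(\sqrt{\log(1/\delta)/n}\bigr)$ with probability $1-\delta$; the linear branch of the minimum is of lower order for $n\gtrsim\log(1/\delta)$.

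For the second term I would use Assumption~\ref{ass:score_lipsch}, which gives $|s_i(\widetilde{\theta}^{j})-s_i(\theta^\star)|\leq L\,|(\widetilde{\theta}^{j}-\theta^\star)^\top\chi_i|$. Cauchy–Schwarz then bounds the term by
\[
L\,\|\widetilde{\theta}^{j}-\theta^\star\|_2\,\Bigl\|\frac{1}{n}\sum_i \chi_i\chi_i^\top\Bigr\|_{op} .
\]
The empirical covariance has operator norm bounded by $\|\Sigma\|_{op}+o_P(1)$, by subgaussian covariance concentration under Assumption~\ref{ass:design_mat}. It then remains to show $\|\widetilde{\theta}^{j}-\theta^\star\|_2=O_P(n^{-1/2})$. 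This is a standard M-estimation rate for the restricted strongly convex problem: apply the same strong-convexity inequality in $\mathbb{R}^{p-1}$, and note that the restricted empirical gradient at $\theta^\star$ is an average of mean-zero subexponential vectors, so Bernstein applies coordinatewise.

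The main obstacle is the choice of $\theta^\star$ in the presence of the fixed penalty $\lambda>0$. The regularized estimator $\widehat{\theta^{-j}}$ concentrates around the minimizer of the \emph{penalized} population risk. That point does not coincide with the unpenalized $\theta^\star$ for which Assumption~\ref{ass:pop_minim} yields the mean-zero property, so a bias of order $\lambda$ appears in $\|\widetilde{\theta}^{j}-\theta^\star\|_2$.

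My first attempt would instead take $\theta^\star_\lambda$, the penalized population minimizer of the restricted problem, extended by $0$ in coordinate $j$. I would then check directly that $\mathbb{E}[s(\theta^\star_\lambda)\chi^j]=0$ still holds under the null. For the quadratic loss this follows when the $j$-th coordinate has zero partial correlation with the residual $z-{\theta^\star_\lambda}^\top\chi$, for instance by conditional centering of $\chi^j$ given the other coordinates. If this fails in general, I would allow $\lambda=\lambda_n=O(n^{-1/2})$ so that the bias is absorbed into the stated rate. Either route yields $\|\widetilde{\theta}^{j}-\widehat{\theta}\|_2\leq O_P\bigl(\sqrt{\log(1/\delta)/n}\bigr)$ after combining the pieces, with constants depending on $\lambda$, $L$, $\|\Sigma\|_{op}$ and the subgaussian norms.
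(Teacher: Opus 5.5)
Your skeleton is the paper's: strong convexity reduces the claim to $|\nabla_j R_n(\widetilde{\theta}^{j})|$. That quantity is split around a reference point $\theta^\star$ into a centered score term (Bernstein) and a Lipschitz term bounded by an empirical covariance times $\|\widehat{\theta^{-j}}-\theta^{\star,-j}\|_2$. Your Cauchy--Schwarz step through $\|\frac1n\sum_i\chi_i\chi_i^\top\|_{op}$ is in fact the correct version of the paper's step, which drops absolute values when passing to $M_j$.

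The obstacle you isolate is real, and it is exactly where the paper's proof is loose. The paper centers the first term with Assumption~\ref{ass:pop_minim}, a property of the \emph{unpenalized} conditional mean. It then centers $\nabla R_n(\theta^\star)$ in the $\Delta$ bound by taking $\theta^\star$ to be the \emph{penalized} population minimizer, and asserts without justification that this point has $j$-th coordinate $0$. Your two rescues do not close this gap. With fixed $\lambda>0$ nothing can, because the statement is false under the listed assumptions.

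\textbf{Route (a) is circular.} For $\theta^\star_\lambda=(0,\theta^{\star,-j}_\lambda)$, the identity $\mathbb{E}[s(\theta^\star_\lambda)\chi^j]=0$ says exactly that the $j$-th partial derivative of the full penalized population risk vanishes there. In other words, the full ridge population minimizer puts no weight on the null coordinate, which is the population version of the theorem. This fails whenever $\chi^j$ is correlated with $\chi^{-j}$.

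Here is a counterexample. Take $l(u,z)=(u-z)^2$ and $\chi\sim\mathcal{N}(0,\Sigma)$ in $\mathbb{R}^2$ with $\Sigma_{11}=\Sigma_{22}=1$, $\Sigma_{12}=r\neq0$. Let $z=\chi^{2}+\varepsilon$ (second coordinate) with independent Gaussian $\varepsilon$, and $j=1$. This is precisely the $\rho_j$ regression with $\chi^1=y$ correlated with a covariate $\chi^2$ and target $X^j$. Then $z\indep\chi^1\mid\chi^2$, and all assumptions of Appendix~\ref{sect:assump_stab} hold. Yet $\widehat{\theta}\to(\Sigma+\lambda I)^{-1}\Sigma e_2$ with $e_2=(0,1)^\top$, whose first coordinate is $\lambda r/\{(1+\lambda)^2-r^2\}\neq0$. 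Meanwhile $\widetilde{\theta}^{j}$ has first coordinate $0$. So $\|\widetilde{\theta}^{j}-\widehat{\theta}\|_2$ converges almost surely to a positive constant, contradicting any bound that vanishes as $n\to\infty$.

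Route (a) does work under an added hypothesis such as $\mathbb{E}[\chi^j\mid\chi^{-j}]=0$. Conditional independence then gives, for every $\theta$,
\[
\mathbb{E}[\partial_u l(\theta^\top\chi^{-j},z)\chi^j]=\mathbb{E}\bigl[\mathbb{E}[\partial_u l(\theta^\top\chi^{-j},z)\mid\chi^{-j}]\,\mathbb{E}[\chi^j\mid\chi^{-j}]\bigr]=0 .
\]
But this hypothesis excludes the paper's main use case, where $\chi^j=y$ is correlated with $X^{-j}$.

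\textbf{Route (b) breaks your very first step.} With $\lambda_n\asymp n^{-1/2}$, the factor $1/(2\lambda_n)$ multiplies $|\nabla_j R_n(\widetilde{\theta}^{j})|$, which is generically of order $n^{-1/2}$. That leaves only $O_P(1)$: your ``constants depending on $\lambda$'' scale like $1/\lambda$ and blow up. The route can be repaired only by importing curvature from the loss itself. For example, assume $\lambda_{\min}(\Sigma)\geq\kappa>0$ for the quadratic loss, so that $R_n$ is $\kappa$-strongly convex with high probability for fixed $p$. Then centering at the unpenalized $\theta^\star$ leaves a ridge bias of order $\lambda_n\|\theta^\star\|_2/\kappa$, and the stated rate follows.

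Either way, a correct proof requires changing the hypotheses, not merely choosing $\theta^\star$ better. The options are a decorrelation condition, or a vanishing penalty together with a minimum-eigenvalue condition.
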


\begin{proof}
 
 In this proof we denote by $K_\chi:=\|\chi\|_{\psi_2}$ and $K_s:=\|s^\star\|_{\psi_2}$ the subgaussian norm (see Definition \ref{def:subgaussian_norm}) of the design matrix and the score at $\theta^\star$ respectively. 
 
 We start by noting that $R_n$ is $\lambda$-strongly convex, with $\lambda>0$. In particular, we have that 
 \begin{align*}
     R_n(\widehat{\theta})\geq R_n(\widetilde{\theta})+\nabla R_n(\widetilde{\theta})^\top (\widehat{\theta}-\widetilde{\theta})+\frac{\lambda}{2}\|\widehat{\theta}-\widetilde{\theta}\|^2.
 \end{align*}
 Since $\widehat{\theta}$ is the minimizer, we have that $R_n(\widehat{\theta})\leq R_n(\widetilde{\theta})$. Thus, we have that
 \begin{align*}
     \frac{\lambda}{2}\|\widehat{\theta}-\widetilde{\theta}\|^2\leq \nabla R_n(\widetilde{\theta})^\top (\widehat{\theta}-\widetilde{\theta})\leq \|\nabla R_n(\widetilde{\theta})\|\| (\widehat{\theta}-\widetilde{\theta})\|,
 \end{align*}
 where in the last inequality we used Cauchy-Schwarz. This gives us 
 
 \begin{align*}
     \|\widehat{\theta}-\widetilde{\theta}\|\leq \frac{2}{\lambda} \|\nabla R_n(\widetilde{\theta})\|.
 \end{align*}
 
 Now, note that since by definition $$\widehat{\theta^{-j}}=\mathrm{argmin}_{\theta\in \mathbb{R}^{p-1}}R_n(\theta),$$ then $\nabla_l R_n(\widetilde{\theta})=0$ for $l\neq j$. Thus, we simplify 
 
 \begin{align*}
     \|\nabla R_n(\widetilde{\theta})\|=|\nabla_j R_n(\widetilde{\theta})|=\left|\frac{\partial}{\partial \theta_j}\frac{1}{n}\sum_{i=1}^n l(\theta^\top \chi_i, z_i)+\lambda \|\theta\|^2\right|_{\widetilde{\theta}}=\left|\frac{1}{n}\sum_{i=1}^n \chi_{ij}\partial_u l(\widetilde{\theta}^\top \chi_i, z_i)+2\lambda \widetilde{\theta}^j\right|,
 \end{align*}
 
 where $\partial_u l(\widetilde{\theta}^\top \chi_i, z_i)$ is the derivate of the loss with respect to the first argument. Note that $\widetilde{\theta}^j=0$ by definition.
 Thus, combining both, we remark that 
 \begin{align}\label{eq:first_decomp}
      \|\widehat{\theta}-\widetilde{\theta}\|\leq \frac{2}{\lambda} \left|\frac{1}{n}\sum_{i=1}^n \chi_{ij}\partial_u l(\widehat{\theta^{-j}}^\top \chi_i^{-j}, z_i)\right|.
 \end{align}
 
 Note that this last term is the score evaluated at $\widehat{\theta^{-j}}$ since $s_i(\theta)=\partial_u l(\theta^\top \chi_i, z_i).$    

 In the sequel we will bound the right-hand-side in probability. To do so, we decompose this score into (i) the score evaluated at the population parameter $\theta^\star$ and (ii) the estimation error:
 \begin{align}\label{eq:stoch_estim_decomp}
     \frac{1}{n}\sum_{i=1}^n \chi_{ij}\partial_u l(\widehat{\theta^{-j}}^\top \chi_i^{-j}, z_i)=\frac{1}{n}\sum_{i=1}^n \chi_{ij}\left(\partial_u l({\theta^{\star -j}}^\top \chi_i^{-j}, z_i)+\left(\partial_u l(\widehat{\theta^{-j}}^\top \chi_i^{-j}, z_i)-\partial_u l({\theta^{\star -j}}^\top \chi_i^{-j}, z_i)\right)\right).
 \end{align}

The first term is a mean zero stochastic term. Indeed, note that using Assumption \ref{ass:pop_minim}, we exactly have that $$\e{\chi_i^j\partial_u l({\theta^{\star-j}}^\top \chi^{-j}_i, z_i)}=\e{\chi_i^j\e{\partial_u l({\theta^{\star-j}}^\top \chi^{-j}_i, z_i)\mid \chi}}=0.$$

The second term in \eqref{eq:stoch_estim_decomp} is a bit more complex since it is an estimation error term and there is the dependence to take into account between the $\chi_i^j$ and the estimate $\widehat{\theta^{-j}}$. To deal with this term we start by using that the score is $L$-Lipschitz with respect to the model argument using Assumption \ref{ass:score_lipsch}:

\begin{align}
    \frac{1}{n}\sum_{i=1}^n \chi_{i}^j\left(\partial_u l(\widehat{\theta^{-j}}^\top \chi_i^{-j}, z_i)-\partial_u l({\theta^{\star -j}}^\top \chi_i^{-j}, z_i)\right)&\leq \frac{L}{n}\sum_{i=1}^n |\chi_{ij}|\left|\widehat{\theta^{-j}}^\top \chi_i^{-j}-{\theta^{\star -j}}^\top \chi_i^{-j}\right|\notag
     \\&=L\left(\frac{1}{n}{\chi^{-j}}^\top \chi^j\right)^\top (\widehat{\theta^{-j}}-\theta^{\star-j})\notag\\
    \\&\leq L\left\|\frac{1}{n}{\chi^{-j}}^\top \chi^j\right\| \left\|\widehat{\theta^{-j}}-\theta^{\star-j}\right\|:=L\|M_j\| \|\Delta\|. \label{eq:estim_M_delta}
\end{align}
The $M_j$ term which is an estimate of the population covariance. Later we will provide a bound in probability of this quantity using Assumption \ref{ass:design_mat} since the population covariance is bounded.
For the $\Delta$-estimation term we can use the same trick as in the beginning. Since the regularized loss is $\lambda$-strongly convex, we have that:

\begin{align*}
    R_n(\widehat{\theta^{-j}})\geq R_n(\theta^\star)+\nabla R_n(\theta^\star)^\top (\widehat{\theta^{-j}}-\theta^{\star-j}).
\end{align*}

Note that since $j\in \mathcal{H}_0$, we have that $z\indep \chi^{j}\mid \chi^{-j}$ and therefore $\beta_j=0$ (Prop. 5.2 from \citet{reyerolobo2025principledapproachcomparingvariable}). Thus, $\beta^\star=(0,\beta^{\star -j})\in \{0\}\times \mathbb{R}^{p-1}$. Thus, since $$\widehat{\theta^{-j}}=\mathrm{argmin}_{\theta\in \mathbb{R}^{p-1}}\frac{1}{n}\sum l(\theta^\top \chi_i^{-j}, z_i)+\lambda\|\theta\|^2,$$
we have in particular that $R_n(\widehat{\theta^{-j}})\leq R_n(\theta^\star)$. Then, using Cauchy-Schwarz as before, we simplify and obtain that 
\begin{align}
    \|\widehat{\theta^{-j}}-\theta^{\star-j}\|\leq \frac{2}{\lambda}\|\nabla R_n(\theta^\star)\|. \label{eq:estim_term}
\end{align}

We observe that since $$\theta^\star=\mathrm{argmin}_{\theta\in \mathbb{R}^p}R(\theta):=\mathrm{argmin}_{\theta\in \mathbb{R}^p}\e{l(\theta^\top \chi, z)}+\lambda\|\theta\|^2,$$
then $0=\nabla R(\theta^\star)=\e{\chi\nabla l(\chi^\top \theta^\star, z)}+2\lambda\theta^\star$. Thus, $2\lambda\theta^\star=-\e{\chi\nabla l(\chi^\top \theta^\star, z)}.$ Also, note that $$\nabla R_n(\theta^\star)=\frac{1}{n}\sum \chi_i\nabla l (\chi_i^\top \theta^\star, z_i)+2\lambda\theta^\star=\frac{1}{n}\sum \chi_i\nabla l (\chi_i^\top \theta^\star, z_i)-\e{\chi\nabla l(\chi^\top \theta^\star, z)}.$$

Therefore, combining this with \eqref{eq:estim_term}, we have that 
\begin{align}\label{eq:delta_bound}
    \|\Delta\|:=\|\widehat{\theta^{-j}}-\theta^{\star-j}\|\leq \frac{2}{\lambda} \left\|\frac{1}{n}\sum \chi_i\nabla l (\chi_i^\top \theta^\star, z_i)-\e{\chi\nabla l(\chi^\top \theta^\star, z)}\right\|.
\end{align}

Thus, we have transformed the estimation bias into a centered sum to which we can apply the standard concentration inequalities.

Combining all the terms from \eqref{eq:first_decomp}, \eqref{eq:stoch_estim_decomp}, \eqref{eq:estim_M_delta}, and finally \eqref{eq:delta_bound}

\begin{align}\notag
    \|\widehat{\theta}-\widetilde{\theta}\|&\leq \frac{2}{\lambda}\left|\frac{1}{n}\sum_{i=1}^n \chi_{ij}\left(\partial_u l({\theta^{\star -j}}^\top \chi_i^{-j}, z_i)+\left(\partial_u l(\widehat{\theta^{-j}}^\top \chi_i^{-j}, z_i)-\partial_u l({\theta^{\star -j}}^\top \chi_i^{-j}, z_i)\right)\right)\right|\\\notag
    &\leq \frac{2}{\lambda}\left|\frac{1}{n}\sum_{i=1}^n \chi_{ij}\partial_u l({\theta^{\star -j}}^\top \chi_i^{-j}, z_i)\right|+\frac{2}{\lambda}L\|M_j\| \|\Delta\|\\\notag
    &\leq \frac{2}{\lambda}\left|\frac{1}{n}\sum_{i=1}^n \chi_{ij}\partial_u l({\theta^{\star -j}}^\top \chi_i^{-j}, z_i)\right|+\frac{4L}{\lambda^2}\|M_j\| \left\|\frac{1}{n}\sum \chi_i\nabla l (\chi_i^\top \theta^\star, z_i)-\e{\chi\nabla l(\chi^\top \theta^\star, z)}\right\|\\\label{eq:theta_stab}
    &\leq \frac{2}{\lambda}\left|\frac{1}{n}\sum_{i=1}^n \chi_{ij}s_i^\star\right|+\frac{4L}{\lambda^2}\|M_j\| \left\|\frac{1}{n}\sum \chi_i s_i^\star-\e{\chi s^\star}\right\|.
\end{align}

Using Assumption \ref{ass:score_subgau} (subgaussian score) and Assumption \ref{ass:design_mat} (subgaussian design), we have that the product is subexponential (see Lemma \ref{lemm:subG_x_subG}): for $w_i:=\chi_{i}^j s_i^\star$, $\|w_i\|_{\psi_1}\leq K_\chi K_s$.

Thus, using subexponential Bernstein (Theorem \ref{th:bernstein}), 
\begin{align*}
    \prob{\left|\frac{1}{n}\sum_{i=1}^n w_i\right|\geq t}\leq 2\mathrm{exp}\left(-cn\mathrm{min}\left(\frac{t^2}{K_\chi^2 K_s^2}, \frac{t}{K_\chi K_s}\right)\right).
\end{align*}

Working in the small-deviation regime ($t$ small enough), 

\begin{align}
    \left|\frac{1}{n}\sum_{i=1}^n \chi_{ij}s_i^\star\right| \lesssim K_\chi K_s\sqrt{\frac{\mathrm{log}(2/\delta)}{n}}. \label{eq:chi_stab}
\end{align}

For the $M_j$ term, using Assumption \ref{ass:design_mat} we can bound this quantity in probability. First, as it is product of subgaussian, it is subexponential. Thus, we can apply Bernstein's inequality and in the small deviation regime, with small enough probability, we have $$\|M_j-\Sigma_{-j, j}\|_2\lesssim K_\chi^2\sqrt{\frac{\mathrm{log}(1/\delta)}{n}}.$$Since the population covariance operator norm is bounded by assumption, we have that with probability over $1-\delta$,
\begin{align}
    \|M_j\|_2\lesssim \|\Sigma_{-j, j}\|_2 +K_\chi^2\sqrt{\frac{\mathrm{log}(1/\delta)}{n}}\leq B+O_P(1).\label{eq:m_stab}
\end{align}

Finally, for the $\Delta$ term, we proceed similar as before but just taking into account the dimension. Indeed, denoting $G^l=\frac{1}{n}(\sum_i s_i^\star \chi_i^l-\e{s^\star \chi^l})$, by union bound we have that 
\begin{align*}
    \prob{\mathrm{max}_l|G^l|\geq t }\leq 2(p-1)\mathrm{exp}\left(-cn\frac{t^2}{K_\chi^2 K_s^2}\right).
\end{align*}
Thus, using that $\|G\|_2\leq \sqrt{p-1}\mathrm{max}_l |G^l|$, we have that with probability $\geq 1-\delta$, 
\begin{align}
    \|G\|_2\leq \sqrt{p-1}\frac{K_\chi K_s}{\sqrt{c}}\sqrt{\frac{\mathrm{log}(2(p-1)/\delta)}{n}}.\label{eq:g_2}
\end{align}

We finally conclude by directly plugging in \eqref{eq:chi_stab}, \eqref{eq:m_stab}, and \eqref{eq:g_2} into \eqref{eq:theta_stab}.

 \end{proof}

\subsection{Semi-knockoffs $\mathcal{W}_1$-convergence: Theorem \ref{theo:W_1conv}}
\begin{theorem}[$\mathcal{W}_1$ control]
 For $j \in \mathcal{H}_0$, under Theorem \ref{thm:opt_stab} assumptions and that model $\widehat{m}$ and loss in the prediction argument are Lipschitz, we have, with probability at least $1-\delta$, that
 \begin{align*}
     \mathcal{W}_1(\hat{P}_1^j, \hat{P}_2^j)\leq O_P\left(\sqrt{\frac{\mathrm{log}(1/\delta)}{n}}\right).
 \end{align*}
 \end{theorem}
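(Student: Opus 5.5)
The approach is a coupling argument. Since $\mathcal{W}_1(\hat P_1^j,\hat P_2^j)\le \mathbb{E}|A-B|$ for any pair $(A,B)$ with the right marginals, it suffices to build $\widetilde{X}'^{(j)}_1$ and $\widetilde{X}'^{(j)}_2$ on the same probability space and bound the expected gap between their losses. We feed both constructions the same test point $(X^{-j},y)$ and the same residual index. In the notation of the FDR proof, we take $U_j=U'_j=U$ uniform on $\{1,\ldots,n\}$; this does not change either marginal law.

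\textbf{Step 1: Lipschitz reduction.} Under this coupling, $\widetilde{X}'^{(j)}_1$ and $\widetilde{X}'^{(j)}_2$ differ only in their $j$-th coordinate. If $\widehat{m}$ is $L_m$-Lipschitz and $l$ is $L_l$-Lipschitz in its prediction argument, then
\[
\mathcal{W}_1(\hat P_1^j,\hat P_2^j)\le L_lL_m\,\mathbb{E}\bigl|\widetilde{X}'^{(j)j}_1-\widetilde{X}'^{(j)j}_2\bigr|.
\]
Expanding the two constructions, the gap equals
\[
\bigl\{\widehat{\nu}_j(X^{-j})-\widehat{\rho}_j(X^{-j},y)\bigr\}-\bigl\{\widehat{\nu}_j(X^{-j}_U)-\widehat{\rho}_j(X^{-j}_U,y_U)\bigr\},
\]
since the $X^j_U$ terms cancel. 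So everything comes down to controlling $|\widehat{\nu}_j-\widehat{\rho}_j|$ at two points whose inputs are subgaussian.

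\textbf{Step 2: apply Theorem~\ref{thm:opt_stab}.} We use the linear imputers of Section~\ref{sect:stab} with inputs $\chi=(X^{-j},y)$. Then $\widehat{\rho}_j(x^{-j},y)=\widehat{\theta}^\top(x^{-j},y)$ and $\widehat{\nu}_j(x^{-j})=\widetilde{\theta}^\top(x^{-j},y)$, where $\widetilde{\theta}$ is $\widehat{\theta^{-y}}$ padded with a zero in the $y$-coordinate. The removed coordinate is now $y$, and it is a null feature for predicting $X^j$ precisely because $X^j\indep y\mid X^{-j}$ for $j\in\mathcal{H}_0$. Hence Theorem~\ref{thm:opt_stab}, with the roles of the feature and $y$ exchanged, gives $\|\widehat{\theta}-\widetilde{\theta}\|_2=O_P(\sqrt{\log(1/\delta)/n})$ with probability at least $1-\delta$. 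By Cauchy–Schwarz,
\[
|\widehat{\nu}_j(\cdot)-\widehat{\rho}_j(\cdot)|\le \|\widehat{\theta}-\widetilde{\theta}\|_2\,\|\chi\|_2 .
\]

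\textbf{Step 3: bound the multiplier.} We need $\mathbb{E}\|\chi\|_2+\frac1n\sum_i\|\chi_i\|_2=O_P(1)$. The first term is bounded under Assumption~\ref{ass:design_mat}, using subgaussianity and bounded covariance; this yields a factor of order $\sqrt{p}K_\chi$, which is a constant since $p$ is fixed. The empirical average concentrates by Bernstein's inequality (Theorem~\ref{th:bernstein}) on an event of probability $1-\delta$. Combining Steps 1–3 through a union bound on the two events, and rescaling $\delta$, gives the claimed rate.

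\textbf{Main obstacle.} The delicate point is that $\widehat{\theta}$ and $\widetilde{\theta}$ are data-dependent, and they are evaluated both at a fresh point and at training points $\chi_U$. I would therefore keep the Cauchy–Schwarz factorization, where the random norm difference comes out of the expectation conditionally on the data, rather than attempting a uniform bound. A second point to check is that the assumptions of Theorem~\ref{thm:opt_stab} hold when $y$ is the excluded coordinate, in particular that $y$ is subgaussian so that Assumption~\ref{ass:design_mat} holds for $\chi=(X^{-j},y)$. It should also be stated explicitly that $\hat P_i^j$ denotes the law conditional on the fitted imputers.
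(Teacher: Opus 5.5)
Your proposal is correct and follows essentially the same route as the paper's proof. Both couple the two constructions through a shared test point and a shared permutation, and reduce via the Lipschitz constants of $\widehat{m}$ and $l$ to $|\widehat{\nu}_j-\widehat{\rho}_j|$ at two points, where the $X^j_U$ terms cancel. Both then write the imputers as linear maps with $\widetilde{\theta}$ padded by a zero in the $y$-coordinate, and conclude by Cauchy--Schwarz, Theorem~\ref{thm:opt_stab} (with $y$ as the excluded null coordinate), and a bounded moment of $\|(X^{-j},y)\|$. Your version is slightly more careful than the paper's in two places: you fix the coupling before applying the Lipschitz bound, so the same $y$ enters both losses, and you control the residual-index term through the empirical average $\frac1n\sum_i\|\chi_i\|_2$ rather than treating it as a fresh draw from $P$.
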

 
 \begin{proof}
For the proof, we denote by $R$ the Lipschitz constant of the loss function and by $M$ the Lipschitz constant of the model $\widehat{m}$.
We start by recalling the definition of the 1-Wasserstein distance between two distributions $\hat{P}_1$ and $\hat{P}_2$:
 \begin{align*}
     W_1(\hat{P}_1, \hat{P}_2) &=
        \underset{P \in \Theta(\hat{P}_1, \hat{P}_2)}{\mathrm{inf}}
        \int_{\mathbb{R} \times \mathbb{R}}
        |x-y| P(\mathrm{dx, dy}).
 \end{align*}
 
By the construction of the distributions from which we sample, and under the Lipschitz assumptions, we obtain
 \begin{align*}
     W_1(\hat{P}_1, \hat{P}_2) &= 
        \underset{P \in \Theta(\hat{P}_1, \hat{P}_2)}{\mathrm{inf}}
        \int_{\mathbb{R} \times \mathbb{R}} 
        |x-y| P(\mathrm{dx, dy})\\
        &= \underset{P \in \Theta(\hat{P}_1, \hat{P}_2)}{\mathrm{inf}}
        \int_{\mathbb{R}^p \times \mathbb{R}\times \mathbb{R}^p \times \mathbb{R}} 
        |l(\widehat{m}(\widetilde{X}'_1), y_1)-l(\widehat{m}(\widetilde{X}'_2), y_2)| P(\mathrm{d\widetilde{x}_1',dy_1,d\widetilde{x}_2',dy_2 })\\
        & \leq  RM\underset{P \in \Theta(\hat{P}_1, \hat{P}_2)}{\mathrm{inf}}
        \int_{\mathbb{R}^p \times \mathbb{R}^p} 
        |\widetilde{X}'_1-\widetilde{X}'_2| P(\mathrm{d\widetilde{x}_1',d\widetilde{x}_2'}).
 \end{align*}

Since it is the infimum over all couplings, we use the information provided by the remaining coordinates from the same sample $(X_1, y_1)$, which are independent from those used for the residuals $(X_2, y_2)$ in both distributions; that is, we apply the same permutation to both empirical distributions:

  \begin{align*}
     W_1(\hat{P}_1, \hat{P}_2)  &\leq  RM\underset{P \in \Theta(\hat{P}_1, \hat{P}_2)}{\mathrm{inf}}
        \int_{\mathbb{R}^p \times \mathbb{R}^p} 
        |\widetilde{X}'_1-\widetilde{X}'_2| P(\mathrm{d\widetilde{x}_1',d\widetilde{x}_2'})\\
        &\leq RM \int_{\mathbb{R}^p \times \mathbb{R}^p} 
        \left|\left(\widehat{\nu}(X_1^{-j})+\left(X^j_2-\widehat{\nu}(X^{-j}_2)\right)\right)\right. \\ &\qquad\left.
        -\left(\widehat{\rho}(X_1^{-j}, y_1)+\left(X^j_2-\widehat{\rho}(X^{-j}_2, y_2)\right)\right)\right| P(\mathrm{dx_1^{-j},dy_1})\times P(\mathrm{dx_2,dy_2})\\
        & = RM \int_{\mathbb{R}^p \times \mathbb{R}^p} 
        \left|\left(\widehat{\nu}(X_1^{-j})-\widehat{\rho}(X_1^{-j}, y_1)\right)\right. \\ &\qquad\left.
        -\left(\widehat{\nu}(X^{-j}_2)-\widehat{\rho}(X^{-j}_2, y_2)\right)\right| P(\mathrm{dx_1^{-j},dy_1})\times P(\mathrm{dx_2^{-j},dy_2})
        \\&\leq 2RM \int_{\mathbb{R}^p} 
        \left|\left(\widehat{\nu}(X^{-j})-\widehat{\rho}(X^{-j}, y)\right)\right| P(\mathrm{dx^{-j},dy}).
 \end{align*}

Note that both $\widehat{\nu}$ and $\widehat{\rho}$ are linear, since they are defined as
\[
\widehat{\nu}(X^{-j}) := \widehat{\theta^{-j}}^\top X^{-j} \quad \text{and} \quad 
\widehat{\rho}(X^{-j}, y) := \widehat{\theta}^\top (X^{-j}, y).
\]
Similarly to the stability section, we denote by $\widetilde{\theta}$ the vector $\widehat{\theta^{-j}}$ extended by a $0$ for the $y$-entry, so that both vectors have the same dimension. Thus, we have that 
 
 \begin{align*}
     W_1(\hat{P}_1, \hat{P}_2) &\leq 2RM \int_{\mathbb{R}^p} 
        \left|\left(\widehat{\nu}(X^{-j})-\widehat{\rho}(X^{-j}, y)\right)\right| P(\mathrm{dx^{-j},dy})\\
        &= 2RM \int_{\mathbb{R}^p}
        \left|\left(\widehat{\theta^{-j}}^\top X^{-j}-\widehat{\theta}^\top(X^{-j}, y)\right)\right| P(\mathrm{dx^{-j},dy})\\
        &= 2RM \int_{\mathbb{R}^p}
        \left|\left(\widetilde{\theta}^\top (X^{-j}, y)-\widehat{\theta}^\top(X^{-j}, y)\right)\right| P(\mathrm{dx^{-j},dy})\\ 
        &\leq 2RM \int_{\mathbb{R}^p}
        \left\|\widetilde{\theta}-\widehat{\theta}\right\| \left\|(X^{-j}, y)\right\| P(\mathrm{dx^{-j},dy})\\
        & \lesssim O_p\left(\sqrt{\frac{\mathrm{log}(1/\delta)}{n}}\right)\int_{\mathbb{R}^p}\left\|(X^{-j}, y)\right\| P(\mathrm{dx^{-j},dy}),
 \end{align*}
 where for the last step we used Theorem \ref{thm:opt_stab}. To conclude, we just need to observe that the last expectation is bounded, which is the case from the subgaussian assumption.
 \end{proof}

\subsection{Double Robustness: Theorem \ref{thm:DR}}

 \begin{theorem}[Double Robustness]
 Assume that $j\in \mathcal{H}_0$ and that $t\to l(t, y)$ is $C^2$ with bounded derivatives. Assume also that $\widehat{m}$ is differentiable in the $j$-coordinate and denote by $a_n:=\partial_{x^j}\widehat{m}(X^{-j}, t)$ for a $t$ between $\widetilde{X}'^{j}$ and $\widetilde{X}^j$. Assume that $\widehat{\nu}-\nu=O_P(b_n)$. Then, $$l(\widehat{m}(\widetilde{X}'), y)-l(\widehat{m}(\widetilde{X}), y)=O_P(a_n b_n).$$
 \end{theorem}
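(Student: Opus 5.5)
The proof is a two-fold first-order Taylor (mean value) expansion: first in the loss argument, then in the $j$-th coordinate of $\widehat{m}$. Write $\widetilde{X}=(X^{-j},\widetilde{X}^j)$ and $\widetilde{X}'=(X^{-j},\widetilde{X}'^{j})$. These differ only in the $j$-th coordinate, with
\[
\widetilde{X}'^{j}=\widehat{\nu}(X^{-j})+\bigl(X^j_k-\widehat{\nu}(X^{-j}_k)\bigr),\qquad
\widetilde{X}^{j}=\nu(X^{-j})+\bigl(X^j_k-\nu(X^{-j}_k)\bigr),
\]
where $k$ indexes the residual donor sample. The first step is to control the gap between the two imputed coordinates:
\[
\widetilde{X}'^{j}-\widetilde{X}^{j}=(\widehat{\nu}-\nu)(X^{-j})-(\widehat{\nu}-\nu)(X^{-j}_k).
\]
Since $\widehat{\nu}-\nu=O_P(b_n)$ and both $X^{-j}$ and $X^{-j}_k$ are distributed as $\mathcal{L}(X^{-j})$, each term is $O_P(b_n)$. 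Hence $|\widetilde{X}'^{j}-\widetilde{X}^{j}|=O_P(b_n)$.

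Next, expand the loss. Because $t\mapsto l(t,y)$ is $C^2$ with bounded derivatives, the mean value theorem gives
\[
l(\widehat{m}(\widetilde{X}'),y)-l(\widehat{m}(\widetilde{X}),y)=\partial_t l(\xi,y)\,\bigl(\widehat{m}(\widetilde{X}')-\widehat{m}(\widetilde{X})\bigr)
\]
for some $\xi$ between the two predictions. The factor $\partial_t l(\xi,y)$ is bounded by a constant $C$. A first-order expansion is enough here; the $C^2$ assumption would only be needed to keep a second-order remainder, which is of even smaller order.

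Then apply the mean value theorem to $\widehat{m}$ in its $j$-th coordinate, with the other coordinates fixed at $X^{-j}$:
\[
\widehat{m}(\widetilde{X}')-\widehat{m}(\widetilde{X})=\partial_{x^j}\widehat{m}(X^{-j},t)\,(\widetilde{X}'^{j}-\widetilde{X}^{j})=a_n\,(\widetilde{X}'^{j}-\widetilde{X}^{j})
\]
for some $t$ between $\widetilde{X}'^{j}$ and $\widetilde{X}^{j}$. This is exactly the quantity $a_n$ in the statement. Combining the three steps gives $|l(\widehat{m}(\widetilde{X}'),y)-l(\widehat{m}(\widetilde{X}),y)|\le C|a_n|\,O_P(b_n)=O_P(a_nb_n)$.

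The only delicate point is the first step. We must check that $(\widehat{\nu}-\nu)$, evaluated at the data point and at the residual donor point, is still $O_P(b_n)$ even though $\widehat{\nu}$ may be fitted on the same sample. I would interpret $\widehat{\nu}-\nu=O_P(b_n)$ as uniform in the argument, or at least as holding at a point drawn from $\mathcal{L}(X^{-j})$, and apply a union bound over the two evaluation points. The paper's null-hypothesis discussion suggests reading $a_n$ as small, i.e. $O_P(n^{-1/2})$-type for a model insensitive to a null coordinate. This is what makes the rate a product of two decaying terms, but the proof itself only uses $a_n$ as a random multiplicative factor.
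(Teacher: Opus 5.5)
Your proposal is correct and follows the paper's proof. You expand the loss, apply the mean value theorem to $\widehat{m}$ in the $j$-th coordinate to extract $a_n$, and bound $\widetilde{X}'^{j}-\widetilde{X}^{j}$ by the errors of $\widehat{\nu}-\nu$ at the two evaluation points. The one difference is that you use a first-order expansion of the loss, which is the Lipschitz relaxation the paper itself notes after its proof, instead of its second-order Taylor expansion; this spares you from needing $a_nb_n=O_P(1)$ to absorb the quadratic remainder, and your sign on the donor-point term is correct where the paper writes a plus.
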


\begin{proof}
 We begin by applying Taylor to $t\to l(t, y)$. We obtain
 
 \begin{align}
     l(\widehat{m}(\widetilde{X}'), y)-l(\widehat{m}(\widetilde{X}), y)=l'(\widehat{m}(\widetilde{X}), y)(\widehat{m}(\widetilde{X}')-\widehat{m}(\widetilde{X}))+\frac{1}{2}l''(u, y)(\widehat{m}(\widetilde{X}')-\widehat{m}(\widetilde{X}))^2,\label{eq:DR_first}
 \end{align}
 for some $u$ between $\widehat{m}(\widetilde{X}')$ and $\widehat{m}(\widetilde{X})$.
 Now, applying the mean-value theorem to $t\to \widehat{m}(X^{-j}, t)$ we have that
 
 \begin{align*}
     \widehat{m}(\widetilde{X}')-\widehat{m}(\widetilde{X}&)=\partial_{x^j}\widehat{m}(X^{-j}, t)(\widetilde{X}'^j-\widetilde{X}^j)\\
     &=a_n\left(\left(\widehat{\nu}(X^{-j})+(X'^j-\widehat{\nu}(X'^{-j}))\right)-\left(\nu(X^{-j})+(X'^j-\nu(X'^{-j}))\right)\right)\\
     &= a_n\left(\left(\widehat{\nu}(X^{-j})-\nu(X^{-j})\right)+\left(\widehat{\nu}(X'^{-j})-\nu(X'^{-j})\right)\right)\\
     &= O_P(a_n b_n).
 \end{align*}
 
 Finally, we obtain the result by plugging the last into \eqref{eq:DR_first} and using the boundedness of the loss.

 \end{proof}

 From the proof, we also observe that the assumptions on the loss function could be relaxed to merely require Lipschitz continuity instead of doing the Taylor expansion in \eqref{eq:DR_first}.

\section{Additional experiments}\label{app:exps}

\subsection{Summary of methods}

In this section, we provide a brief summary of standard procedures for Conditional Independence Testing, drawn from both Interpretable Machine Learning (IML) / Variable Importance Measures (VIM) and Controlled Variable Selection. In the main text, for brevity, we abbreviate names (e.g., \texttt{Wcx} for Wilcoxon, \texttt{SKO} for Semi-knockoffs, \texttt{SCPI} for Sobol-CPI, and \texttt{p} for the number of permutations used in derandomization). Here, we revert to the original terminology. 

Although our experiments include all methods listed in Table~\ref{tab:summary_CIT}, for readability we display only the most relevant methods in the figures.

\begin{table}[h!]
\centering
\begin{tabular}{l l l}
\hline
\textbf{Method} & \textbf{Reference} & \textbf{Guarantee} \\
\hline
Sobol-CPI(1/10/100) &
\makecell[l]{\cite{chamma2024statistically}\\\cite{Watson2021}\\\cite{covert2020understanding}} &
None \\
Sobol-CPI(1/10/100)\_sqrt &
\makecell[l]{\cite{reyerolobo2025conditionalfeatureimportancerevisited}} &
Asympt. type-I error \\
Sobol-CPI(1/10/100)\_n / bt &
\makecell[l]{\cite{reyerolobo2025conditionalfeatureimportancerevisited}} &
Asympt. type-I error under linear settings \\
Sobol-CPI(1/10/100)\_n2 & - & None \\
Sobol-CPI(1)\_ST &
\makecell[l]{\cite{Watson2021}\\\cite{reyerolobo2025conditionalfeatureimportancerevisited}} &
Type-I error \\
Sobol-CPI(1)\_Wilcox &
\makecell[l]{\cite{reyerolobo2025conditionalfeatureimportancerevisited}} &
Type-I error \\
LOCO-W &
\makecell[l]{\cite{williamson2021}} &
Asympt. type-I error \\
LOCO & - & None \\
LOCO\_sqrt &
\makecell[l]{\cite{LocoVSShapley}} &
Asympt. type-I error \\
LOCO\_n/\_n2 & - & None \\
LOCO\_ST/\_Wilcox & \cite{lei2018} & Type-I error \\
dCRT &
\makecell[l]{\cite{liu2022fast}} &
Type-I error \\
HRT &
\makecell[l]{\cite{Tansey02012022}} &
Type-I error \\
Knockoffs &
\makecell[l]{\cite{candes2018panning}} &
FDR \\
Semi\_KO & Ours & FDR \\
Semi\_KO\_ST/\_Wilcox & Ours & Type-I error \\
\hline
\end{tabular}
\caption{Summary of Conditional Independence Testing procedures using Machine Learning methods.}\label{tab:summary_CIT}
\end{table}

\subsection{Double Robustness}\label{app:DR_exp}

In this section, we provide additional insight into the double robustness phenomenon illustrated in Figure~\ref{fig:DR_main}. Our goal is to show that the decay obtained when both the imputer and the model coordinate to detect a null feature is faster than the decay achieved by the model alone. This is visualized by the blue histogram, which is constructed by evaluating the difference in model outputs across two independent imputations (i.e., the residuals are perturbed by two independent random permutations). This distribution captures the variability that comes exclusively from the model, which does not rely on the null feature since it has no predictive power.

In contrast, the orange distribution compares, for the same model, the performance of an estimated imputer with that of the oracle imputer under the same permutation (and thus the same residual perturbations). Since both the model and the imputer cooperate to detect the null feature, the convergence toward zero is faster, resulting in the orange distribution being more concentrated around zero than the blue distribution obtained from independent estimated residuals. This observation underlies the conjecture described in Section~\ref{sect:conject} and in Figure~\ref{fig:conjecture}.

Moreover, it is important to note that across all experiments and models, the blue distribution—which is the one used in practice—is centered and approximately symmetric around zero, thereby guaranteeing exchangeability and, in turn, practical FDR control.
 
In Figures~\ref{fig:DR_sep_D}, \ref{fig:DR_med_D}, and \ref{fig:DR_unique}, we present the experiments in three different data-splitting regimes: (i) the test sample on which the model is evaluated is independent from both the training sample used to fit $\widehat{m}$ and the sample used to train the imputer $\widehat{\nu}$, which are also independent from each other; (ii) only the training and test samples are independent; and (iii) the same dataset is used for training $\widehat{m}$, training $\widehat{\nu}$, and evaluation. We observe that the behavior is qualitatively similar across these regimes and that the double robustness phenomenon persists. These experiments were conducted using Random Forest, Gradient Boosting, and Neural Network models.

\begin{figure}
    \centering
    \includegraphics[width=0.7\textwidth]{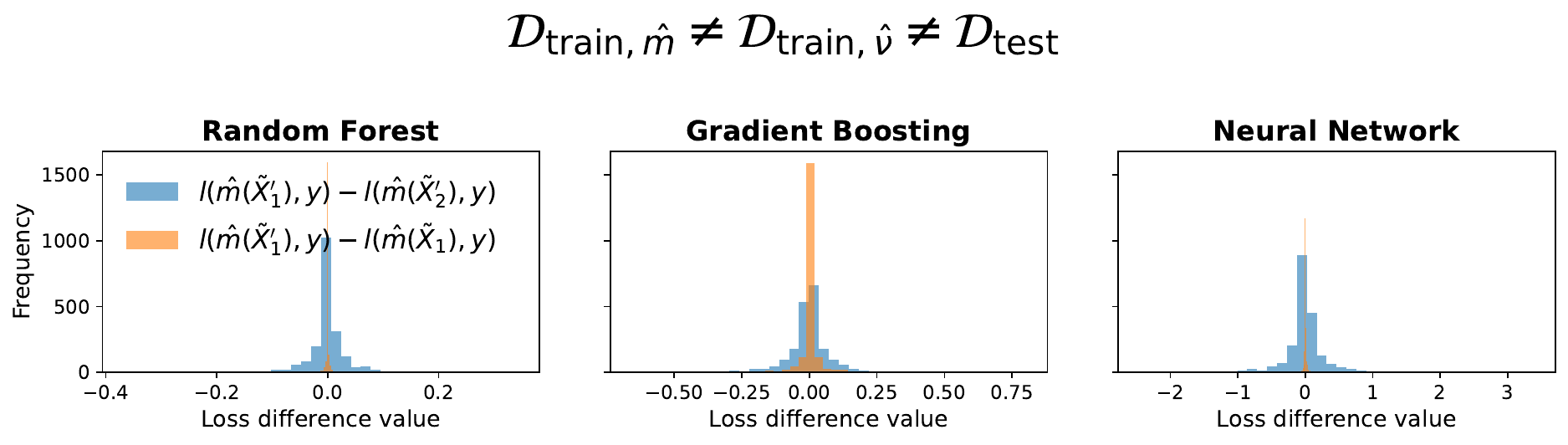}
    \caption{\textbf{Double Robustness:} Distribution of the semi-knockoff statistic, i.e., at two independently sampled estimated residuals (blue: $l(\widehat{m}(\widetilde{X}_1'), y) - l(\widehat{m}(\widetilde{X}_2'), y)$), and distribution of the difference between the theoretical and estimated residuals (orange: $l(\widehat{m}(\widetilde{X}_1'), y) - l(\widehat{m}(\widetilde{X}_1), y)$) for a null coordinate ($j=0$). The data-generating model is $y = 0.8X^1 + 0.6X^2 + 0.4X^3 + 0.2X^4 + \sin(X^1) + \epsilon$, with $\epsilon \sim \mathcal{N}(0, 0.5)$. We use $n = 2000$ samples with $X \sim \mathcal{N}(0, \Sigma)$, where $\Sigma^{i,j} = 0.5^{|i-j|}$. We use an independent test set and independent train set for each model.
 }
    \label{fig:DR_sep_D}
\end{figure}

\begin{figure}
    \centering
    \includegraphics[width=0.7\textwidth]{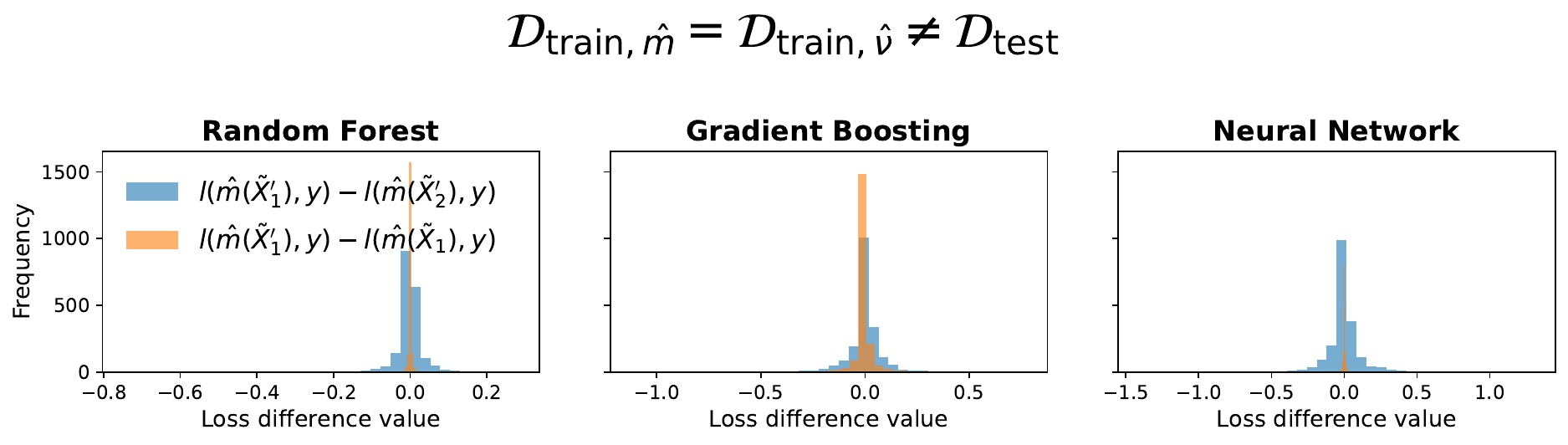}
    \caption{\textbf{Double Robustness:} Distribution of the semi-knockoff statistic, i.e., the difference in loss evaluated at two independently sampled estimated residuals (blue: $l(\widehat{m}(\widetilde{X}_1'), y) - l(\widehat{m}(\widetilde{X}_2'), y)$), and distribution of the difference between the theoretical and estimated residuals (orange: $l(\widehat{m}(\widetilde{X}_1'), y) - l(\widehat{m}(\widetilde{X}_1), y)$) for a null coordinate ($j=0$). The data-generating model is $y = 0.8X^1 + 0.6X^2 + 0.4X^3 + 0.2X^4 + \sin(X^1) + \epsilon$, with $\epsilon \sim \mathcal{N}(0, 0.5)$. We use $n = 2000$ samples with $X \sim \mathcal{N}(0, \Sigma)$, where $\Sigma^{i,j} = 0.5^{|i-j|}$. We use an independent test set and the same train set for each model.
 }
    \label{fig:DR_med_D}
\end{figure}

\begin{figure}
    \centering
    \includegraphics[width=0.7\textwidth]{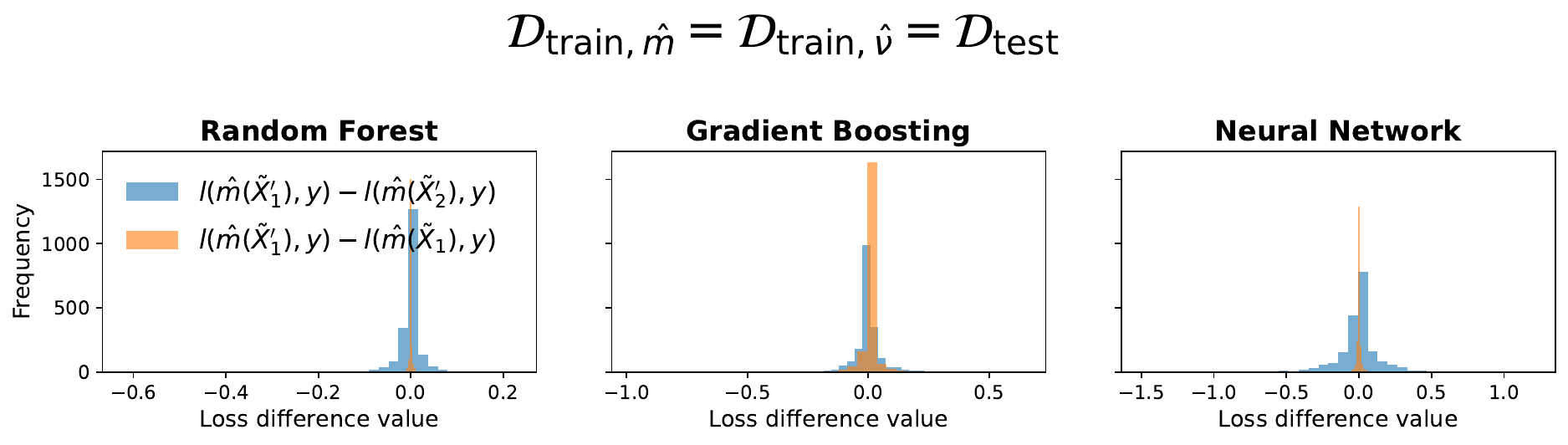}
    \caption{\textbf{Double Robustness:} Distribution of the semi-knockoff statistic, i.e., the difference in loss evaluated at two independently sampled estimated residuals (blue: $l(\widehat{m}(\widetilde{X}_1'), y) - l(\widehat{m}(\widetilde{X}_2'), y)$), and distribution of the difference between the theoretical and estimated residuals (orange: $l(\widehat{m}(\widetilde{X}_1'), y) - l(\widehat{m}(\widetilde{X}_1), y)$) for a null coordinate ($j=0$). The data-generating model is $y = 0.8X^1 + 0.6X^2 + 0.4X^3 + 0.2X^4 + \sin(X^1) + \epsilon$, with $\epsilon \sim \mathcal{N}(0, 0.5)$. We use $n = 2000$ samples with $X \sim \mathcal{N}(0, \Sigma)$, where $\Sigma^{i,j} = 0.5^{|i-j|}$. We use a unique test and train set.
 }
    \label{fig:DR_unique}
\end{figure}

Finally, in Figures~\ref{fig:DR_lin_sep} and \ref{fig:DR_lin_unique}, we study the same property in a higher-dimensional setting and include additional models. We observe that the double robustness phenomenon persists, as indicated by the orange distribution being more concentrated around zero, and that exchangeability still holds in practice, since the blue distribution remains approximately symmetric around zero.

\begin{figure}
    \centering
    \includegraphics[width=1.02\textwidth]{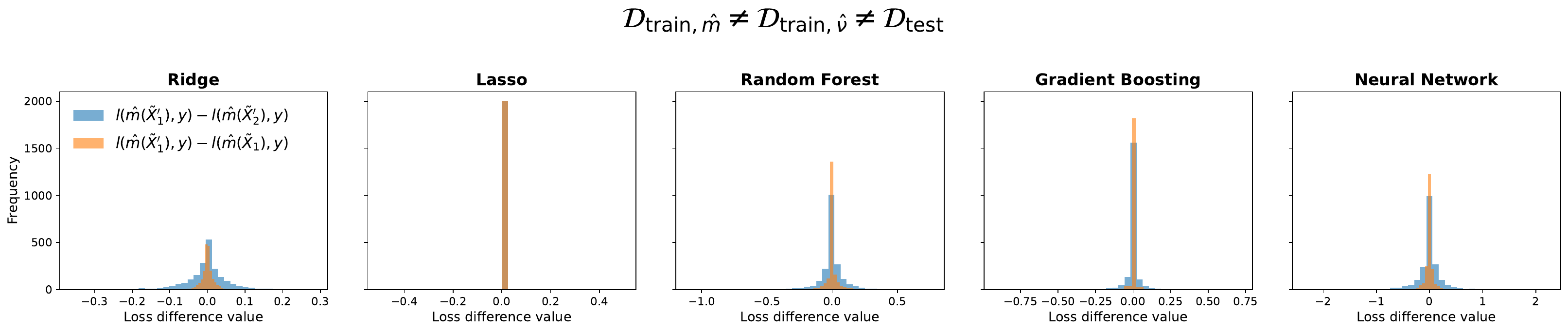}
    \caption{\textbf{Double Robustness:} Distribution of the semi-knockoff statistic, i.e., the difference in loss evaluated at two independently sampled estimated residuals (blue: $l(\widehat{m}(\widetilde{X}_1'), y) - l(\widehat{m}(\widetilde{X}_2'), y)$), and distribution of the difference between the theoretical and estimated residuals (orange: $l(\widehat{m}(\widetilde{X}_1'), y) - l(\widehat{m}(\widetilde{X}_1), y)$) for a null coordinate ($j=0$). The data-generating model is $y = X^\top \beta + \epsilon$, with $\epsilon \sim \mathcal{N}(0, 0.5)$ and $\beta$ $0.5$-sparse randomly sampled with half of the values equally sampled from $0.3$ to $1$, and the other half from $-0.8$ to $-0.2$. We use $n_{\mathrm{train}, ,\widehat{m} }= 500$ samples for training $\widehat{m}$ and $n_{\mathrm{train} ,\widehat{\nu}}=n_\mathrm{test}=2000$ for training $\widehat{\nu}$ and $\widehat{\rho}$ with $X \sim \mathcal{N}(0, \Sigma)$, where $\Sigma^{i,j} = 0.5^{|i-j|}$ and $X\in \mathbb{R}^{50}$. We use an independent test set and independent train set for each model. 
 }
    \label{fig:DR_lin_sep}
\end{figure}

\begin{figure}
    \centering
    \includegraphics[width=1.02\textwidth]{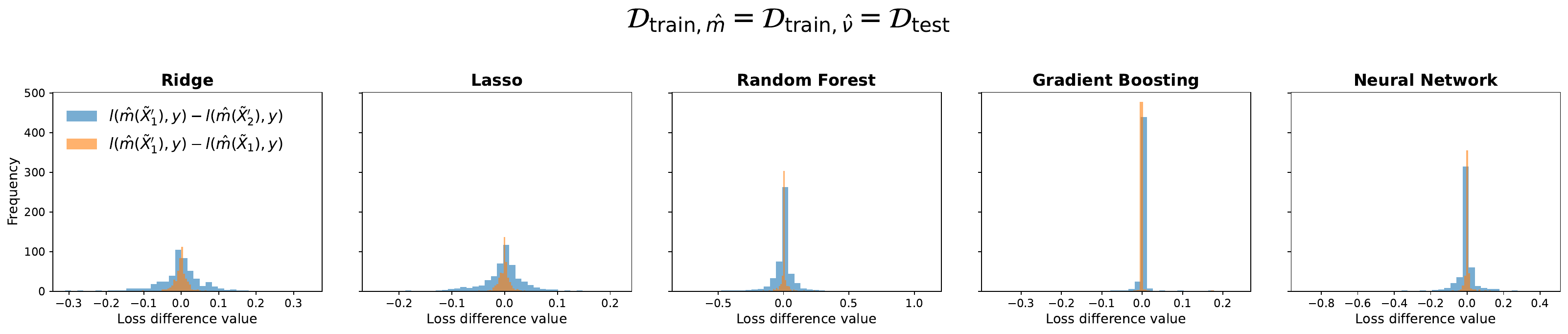}
    \caption{\textbf{Double Robustness:} Distribution of the semi-knockoff statistic, i.e., the difference in loss evaluated at two independently sampled estimated residuals (blue: $l(\widehat{m}(\widetilde{X}_1'), y) - l(\widehat{m}(\widetilde{X}_2'), y)$), and distribution of the difference between the theoretical and estimated residuals (orange: $l(\widehat{m}(\widetilde{X}_1'), y) - l(\widehat{m}(\widetilde{X}_1), y)$) for a null coordinate ($j=0$). The data-generating model is $y = X^\top \beta + \epsilon$, with $\epsilon \sim \mathcal{N}(0, 0.5)$ and $\beta$ $0.5$-sparse randomly sampled with half of the values equally sampled from $0.3$ to $1$, and the other half from $-0.8$ to $-0.2$. We use $n = 500$ samples with $X \sim \mathcal{N}(0, \Sigma)$, where $\Sigma^{i,j} = 0.5^{|i-j|}$ and $X\in \mathbb{R}^{50}$. We use a unique test and train set.
 }
    \label{fig:DR_lin_unique}
\end{figure}

\subsection{Exchangeability}

In Figure~\ref{fig:fdp_illustration}, we observe that, in practice, the sampling scheme effectively preserves the required symmetry under the null. This exchangeability (Lemma~\ref{lemma:exchang}) lies at the heart of knockoff FDR control, since the threshold estimates the number of false discoveries by exploiting this symmetry. This is reflected by the distribution of the null features (blue dots), which is approximately symmetric around 0. As a result, the knockoff threshold successfully controls the FDR across standard ML models, such as Lasso, Ridge, Gradient Boosting, Random Forest, and Neural Networks. Moreover, the important features exhibit a distribution that is clearly separated from that of the nulls, yielding a powerful testing procedure that consists of selecting the coordinates whose statistics are above the threshold (red line).

\begin{figure}
    \centering
    \includegraphics[width=1.05\textwidth]{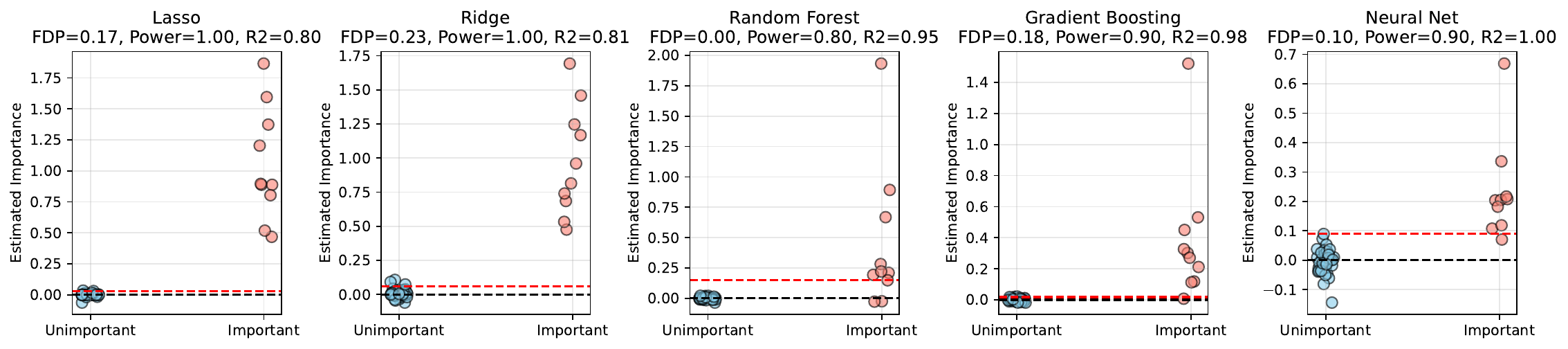}
    \caption{\textbf{Semi-knockoff statistics:} $y = X\beta + \epsilon$, where $\beta$ is 0.25-sparse with important features grouped in blocks of 5 sampled uniformly. The noise level is $\|X\beta\|/2$. The design matrix satisfies $X \sim \mathcal{N}(0, \Sigma)$ with $\Sigma_{i,j} = 0.6^{|i-j|}$. We set $n = 300$ and $p = 50$. The figure displays the distribution of the Semi-knockoff statistics for several standard ML models. In each plot, the null features are shown on the left and the important features on the right. The red horizontal line denotes the data-dependent knockoff threshold corresponding to a target FDR level of $q = 0.2$.
 }
    \label{fig:fdp_illustration}
\end{figure}

\subsection{Type-I error}

In this section, we present the type-I error control experiments using the settings from the main text across several models: the adjacent support setting from Figure~\ref{fig:main_adj_gb} and the masked correlation setting from Figure~\ref{fig:main_masked_nn}. Our goal is to verify that the observed behavior remains consistent for standard ML models, namely neural networks, gradient boosting, and random forests. All models were implemented using \texttt{scikit-learn}~\cite{scikit-learn}. We recall that across the experiments $n=300$ and $p=50$. We additionally include an analysis of computation time.

\paragraph{Computation time.}
Semi-knockoffs are expected to offer substantial computational improvements relative to other CIT methods such as LOCO, dCRT, and HRT. In our experiments, we used multiple models with default hyperparameters, so the differences were somewhat muted for complex models. However, LOCO and the distillation step of dCRT require re-fitting a complex model for each feature. Moreover, both dCRT and HRT require resampling the data thousands of times to compute each p-value. In contrast, Semi-knockoffs can be optimized to avoid these repeated refits and resampling steps, and therefore have the potential to be significantly faster in practice.

\subsubsection{Adjacent support}

This setting consists of having an adjacent support, such that all important features ($0.25p$) form a contiguous block. The results were similar with a randomly sampled support. The main observations from Figure~\ref{fig:main_adj_gb} are: (i) the low power of VIM, which highlights the need for a CIT procedure compatible with arbitrary pre-trained models; and (ii) the reduced power of HRT due to the data-splitting step, relative to Semi-knockoffs. These conclusions are consistent with Figures~\ref{fig:app_adj_nn}, \ref{fig:app_adj_gb}, and \ref{fig:app_adj_rf}, corresponding to neural networks, gradient boosting, and random forests, respectively. The only exception is the neural network case, where the model attains high predictive accuracy even under data splitting ($R^2 = 0.963$), resulting in HRT exhibiting full power and therefore not lagging behind methods that avoid data splitting ($R^2 = 0.965$). For the other models, we observe a clear gain from avoiding the data-splitting step.

Overall, all methods control the type-I error except \texttt{LOCO-W}~\cite{williamson2023general}, which provides only asymptotic type-I error guarantees and requires an additional train–train split for the original and restricted models; in practice, it fails to control the type-I error and also incurs a loss in power. We also note that, similarly to the power, the AUC for \texttt{dCRT} and \texttt{Semi-knockoffs} outperforms the other methods, and that a 5-permutation derandomization step can be beneficial for power.

\begin{figure}
    \centering
    \includegraphics[width=1.03\textwidth]{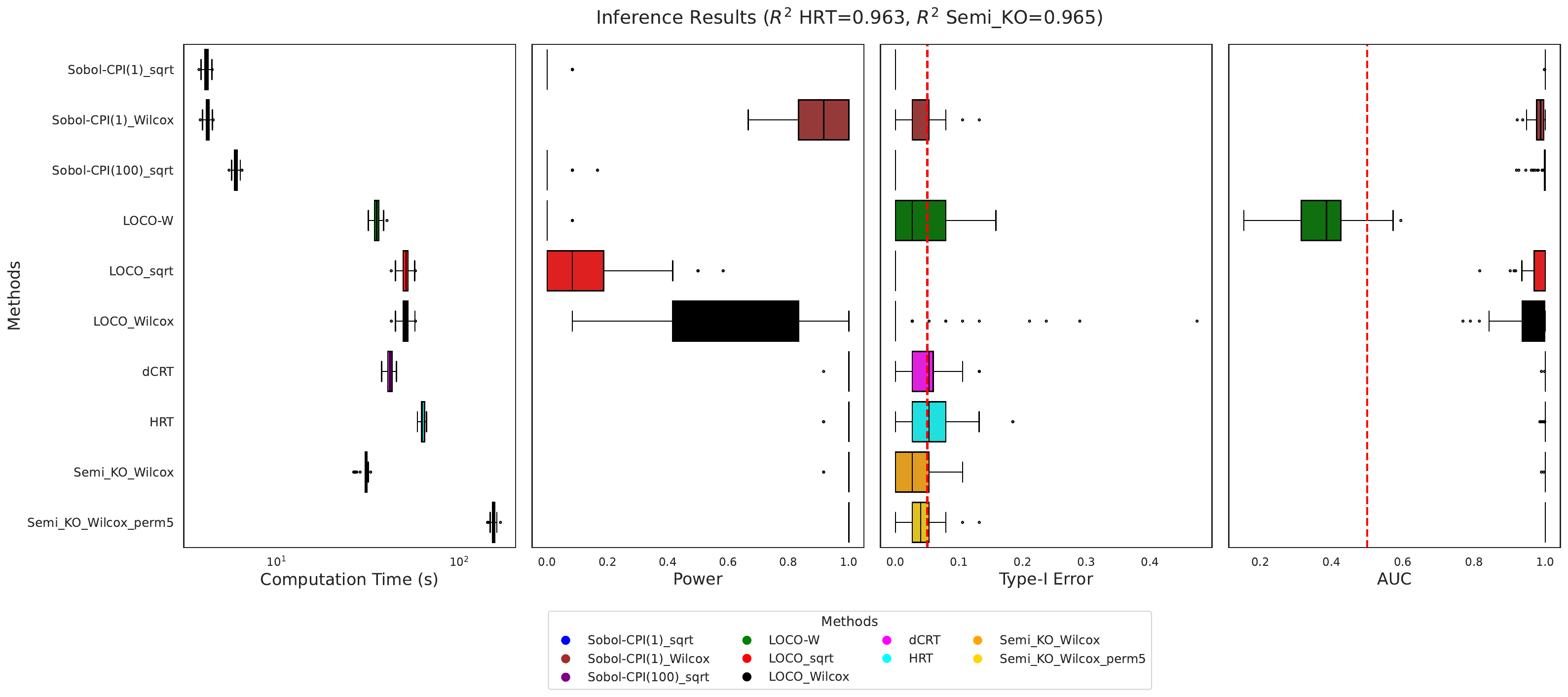}
    \caption{\textbf{Type-I error adjacent support with Neural Network.} Let $X \sim \mathcal{N}(0,\Sigma)$ with $\Sigma^{ij} = 0.6^{|i-j|}$, and $y = \beta^\top X + \epsilon$, where the first $0.25p$ coordinates of $\beta$ are between $1$ and $2$ and the remaining entries are zero, and $\epsilon \sim \mathcal{N}(0,1)$. The black-box pretrained model $\widehat{m}$ is a neural network.
    }
    \label{fig:app_adj_nn}
\end{figure}

\begin{figure}
    \centering
    \includegraphics[width=1.03\textwidth]{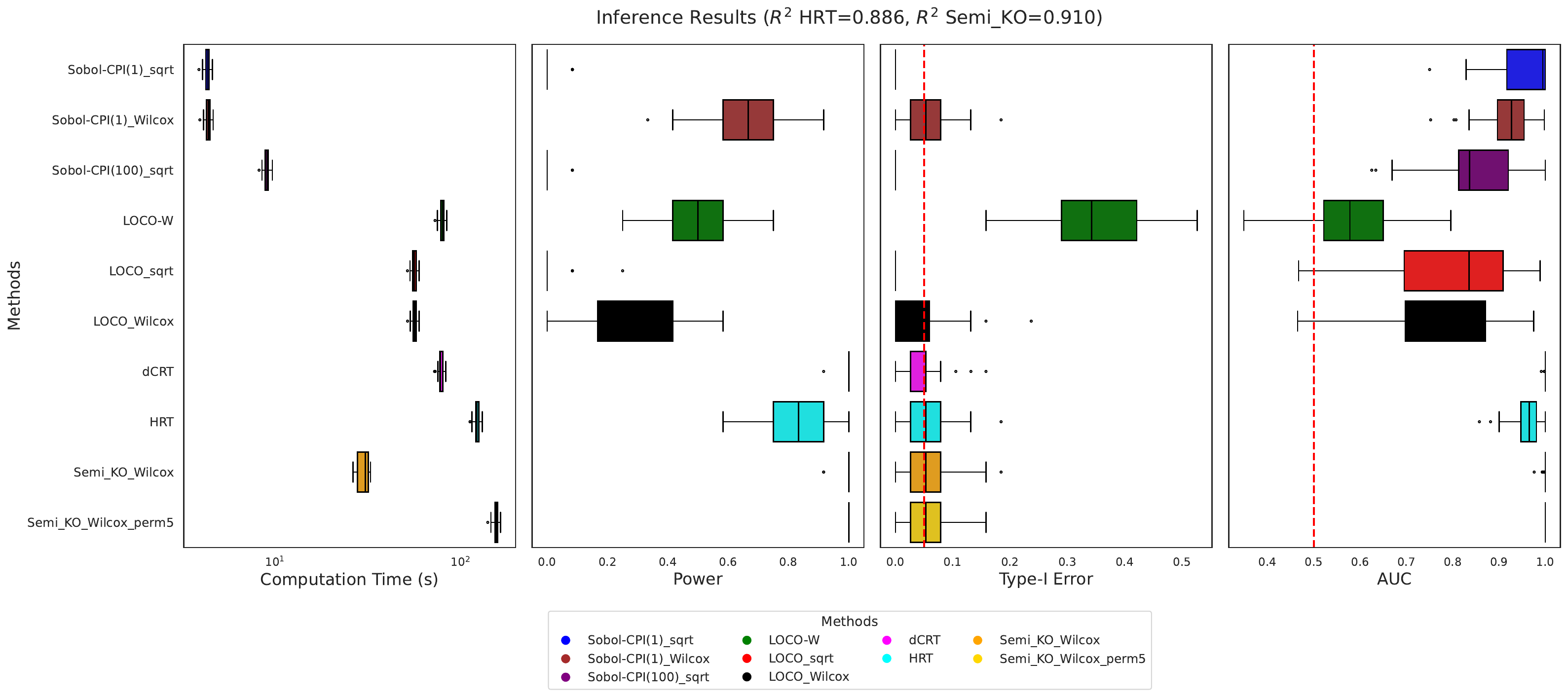}
    \caption{\textbf{Type-I error adjacent support with Gradient Boosting.} Let $X \sim \mathcal{N}(0,\Sigma)$ with $\Sigma^{ij} = 0.6^{|i-j|}$, and $y = \beta^\top X + \epsilon$, where the first $0.25p$ coordinates of $\beta$ are between $1$ and $2$ and the remaining entries are zero, and $\epsilon \sim \mathcal{N}(0,1)$. The black-box pretrained model $\widehat{m}$ is a gradient boosting. }
    \label{fig:app_adj_gb}
\end{figure}

\begin{figure}
    \centering
    \includegraphics[width=1.03\textwidth]{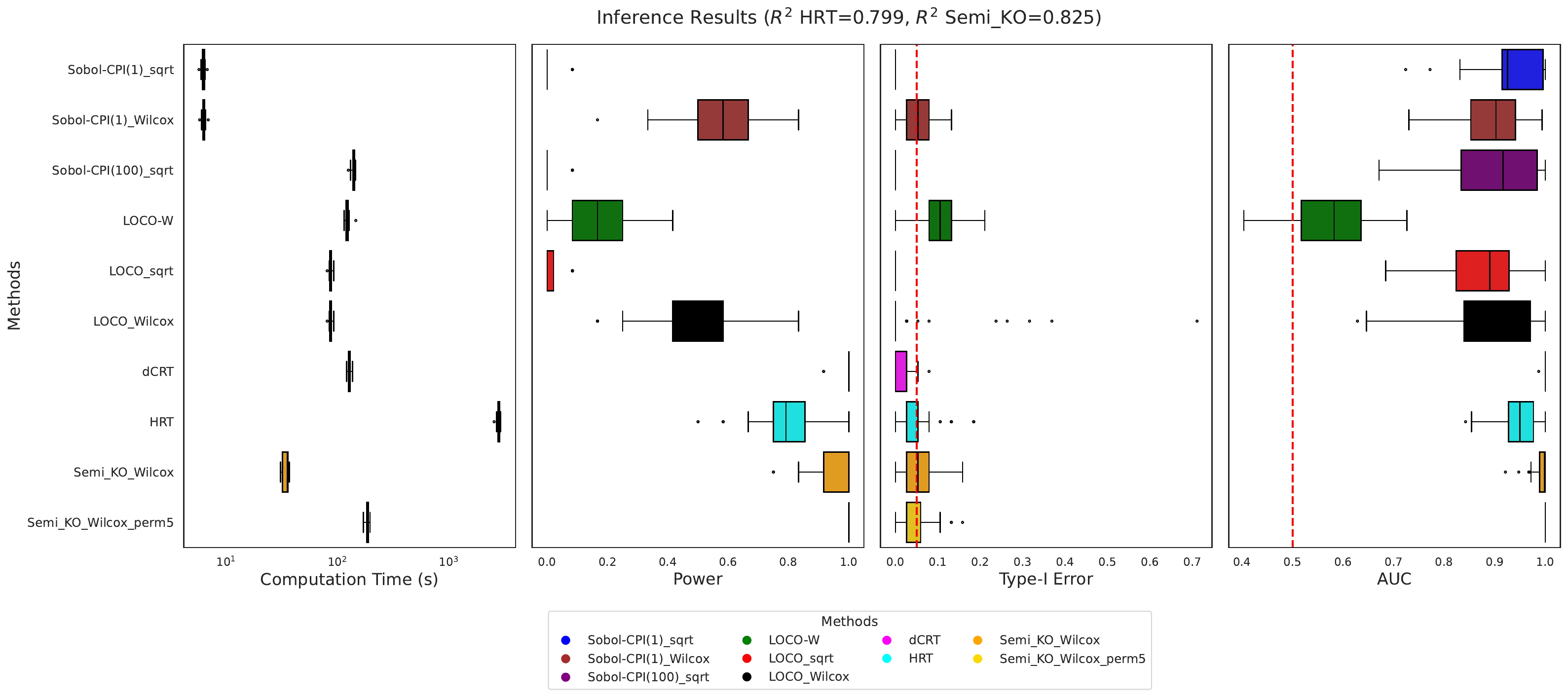}
    \caption{\textbf{Type-I error adjacent support with Random Forest.} Let $X \sim \mathcal{N}(0,\Sigma)$ with $\Sigma^{ij} = 0.6^{|i-j|}$, and $y = \beta^\top X + \epsilon$, where the first $0.25p$ coordinates of $\beta$ are between $1$ and $2$ and the remaining entries are zero, and $\epsilon \sim \mathcal{N}(0,1)$. The black-box pretrained model $\widehat{m}$ is a random forest.}
    \label{fig:app_adj_rf}
\end{figure}

\subsubsection{Masked correlation}

This setting consists of a single important coordinate that is highly correlated with an unimportant feature. Under this configuration, we observe a clear gain from using a pretrained model that can evaluate the contribution of all features jointly. In contrast, the distillation step in dCRT—which consists of training a complex model between $y$ and $X^{-j}$ in order to isolate the importance of feature $X^j$ from the residual—leads to a loss of power, since the distillation shrinks the signal provided by the important feature.

The conclusions here are similar to those in the previous setting. Specifically, \texttt{Sobol-CPI} and \texttt{LOCO} (except for the Wilcoxon version) exhibit no power due to overly restrictive corrections, and there is also a noticeable loss from data splitting in \texttt{HRT}. Finally, the derandomization step increases the power of \texttt{Semi-knockoffs}.

\begin{figure}
    \centering
    \includegraphics[width=1.03\textwidth]{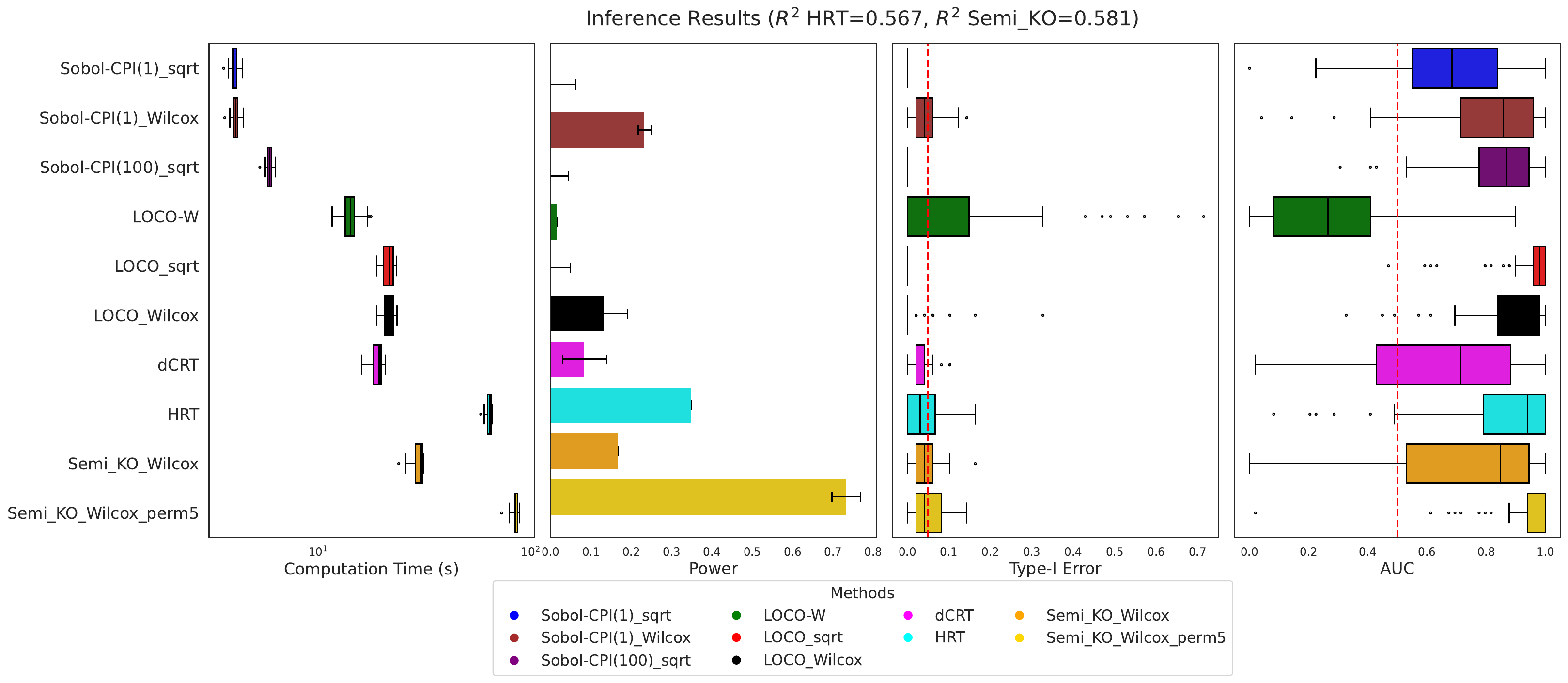}
    \caption{\textbf{Type-I error masked correlation with Neural Network.} Let $X \sim \mathcal{N}(0,\Sigma)$ with $\Sigma_{ij} = 0.6^{|i-j|}$. A unique relevant coordinate $l$ is sampled and $y = X_l + 0.5\,\epsilon_1$, where $\epsilon_1 \sim \mathcal{N}(0,1)$. A correlated null variable is created as $X_{l-1} = X_l + 0.5\,\epsilon_2$, with $\epsilon_2 \sim \mathcal{N}(0,1)$. The black-box pretrained model $\widehat{m}$ is a neural network.
 }
    \label{fig:app_masked_nn}
\end{figure}

\begin{figure}
    \centering
    \includegraphics[width=1.03\textwidth]{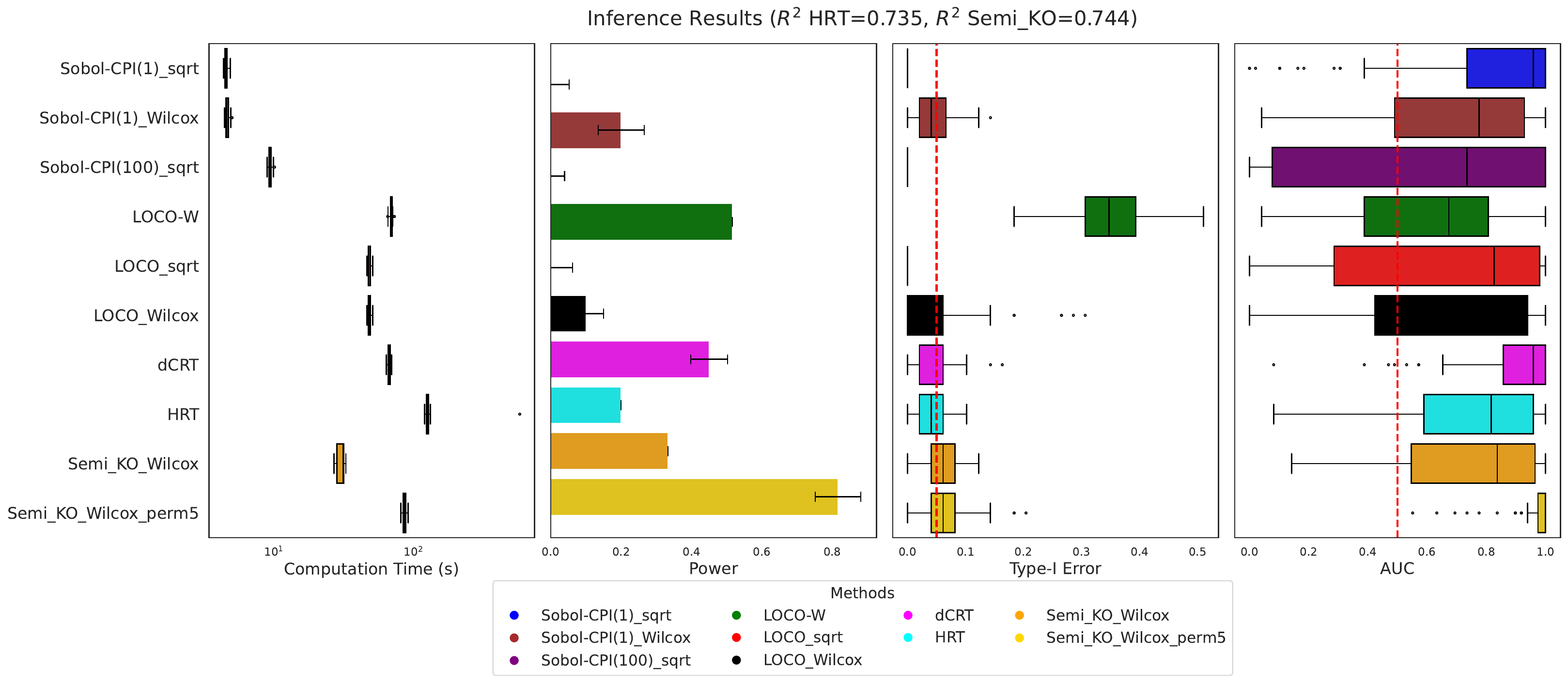}
    \caption{\textbf{Type-I error masked correlation with Gradient Boosting.} Let $X \sim \mathcal{N}(0,\Sigma)$ with $\Sigma_{ij} = 0.6^{|i-j|}$. A unique relevant coordinate $l$ is sampled and $y = X_l + 0.5\,\epsilon_1$, where $\epsilon_1 \sim \mathcal{N}(0,1)$. A correlated null variable is created as $X_{l-1} = X_l + 0.5\,\epsilon_2$, with $\epsilon_2 \sim \mathcal{N}(0,1)$. The black-box pretrained model $\widehat{m}$ is a gradient boosting.
 }
    \label{fig:app_masked_GB}
\end{figure}

\begin{figure}
    \centering
    \includegraphics[width=1.03\textwidth]{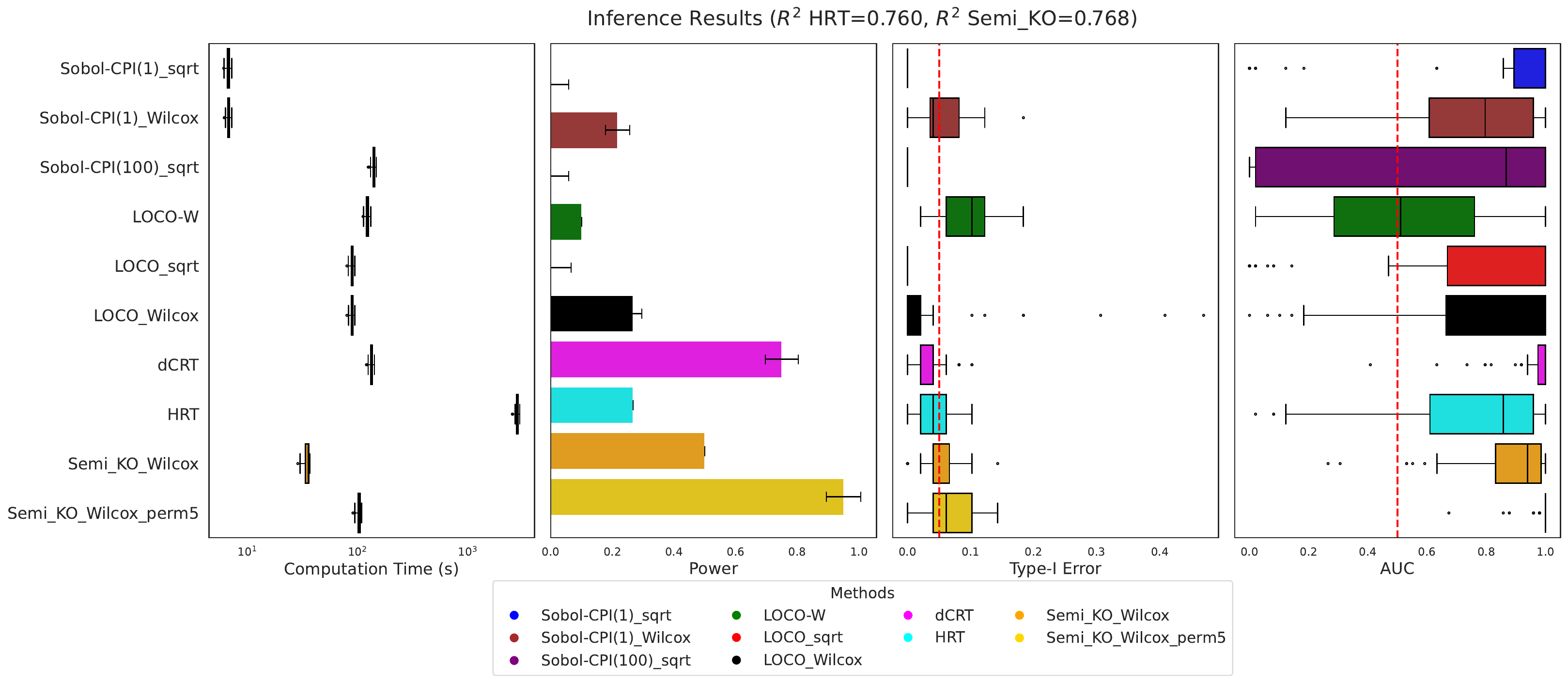}
    \caption{\textbf{Type-I error masked correlation with Random Forest.} Let $X \sim \mathcal{N}(0,\Sigma)$ with $\Sigma_{ij} = 0.6^{|i-j|}$. A unique relevant coordinate $l$ is sampled and $y = X_l + 0.5\,\epsilon_1$, where $\epsilon_1 \sim \mathcal{N}(0,1)$. A correlated null variable is created as $X_{l-1} = X_l + 0.5\,\epsilon_2$, with $\epsilon_2 \sim \mathcal{N}(0,1)$. The black-box pretrained model $\widehat{m}$ is a random forest.
 }
    \label{fig:app_masked_RF}
\end{figure}

\subsection{False Discovery Rate}\label{app:fdr_exp}

In this section, we provide additional intuition regarding the FDR control experiments for Semi-knockoffs. All previously discussed procedures that produce valid p-values can be adapted to FDR control by applying a multiple-testing correction procedure such as BH~\cite{BH}. However, such guarantees are not exact, since the dependence structure among the p-values is unknown, and may violate the assumptions under which BH ensures FDR control. 

In contrast, procedures such as standard \texttt{Knockoffs} and \texttt{Semi-knockoffs} provide exact FDR control by construction. In principle, the Semi-knockoffs framework could be adapted either by applying BH to the p-values from the Wilcoxon version, or by directly applying the knockoff threshold to control the FDR. As our experiments indicate, the latter strategy should be preferred in practice and in theory.

We present results for the Adjacent setting from the previous section, but now focusing on FDR control, as well as for a heavy-tailed setting. These experiments are conducted using neural network, gradient boosting, and random forest models.

\subsubsection{Adjacent support}

We revisit the setting with a block of important features. Since the underlying model is linear, \texttt{Knockoffs} achieves high power, as expected. Moreover, we observe a similar pattern to that seen in the p-value experiments: for gradient boosting (Figure~\ref{fig:app_KO_adj_gb}) and random forest (Figure~\ref{fig:app_KO_adj_rf}), there is a clear gain from avoiding the train–test split. In particular, HRT does not attain full power, which can be attributed to the reduction in $R^2$ caused by data splitting.

\begin{figure}
    \centering
    \includegraphics[width=1.03\textwidth]{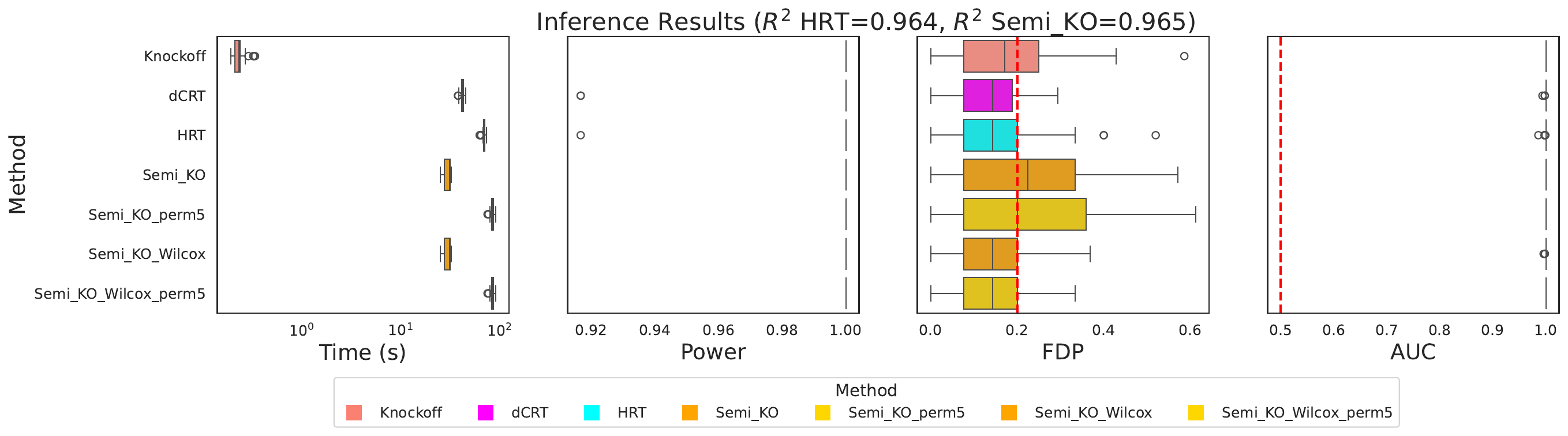}
    \caption{\textbf{FDR adjacent support with Neural Network.} Let $X \sim \mathcal{N}(0,\Sigma)$ with $\Sigma^{ij} = 0.6^{|i-j|}$, and $y = \beta^\top X + \epsilon$, where the first $0.25p$ coordinates of $\beta$ are between $1$ and $2$ and the remaining entries are zero, and $\epsilon \sim \mathcal{N}(0,1)$. The black-box pretrained model $\widehat{m}$ is a neural network, achieving $R^2 = 0.964$ with a train-test split and $R^2 = 0.965$ without split.}
    \label{fig:app_KO_adj_nn}
\end{figure}

\begin{figure}
    \centering
    \includegraphics[width=1.03\textwidth]{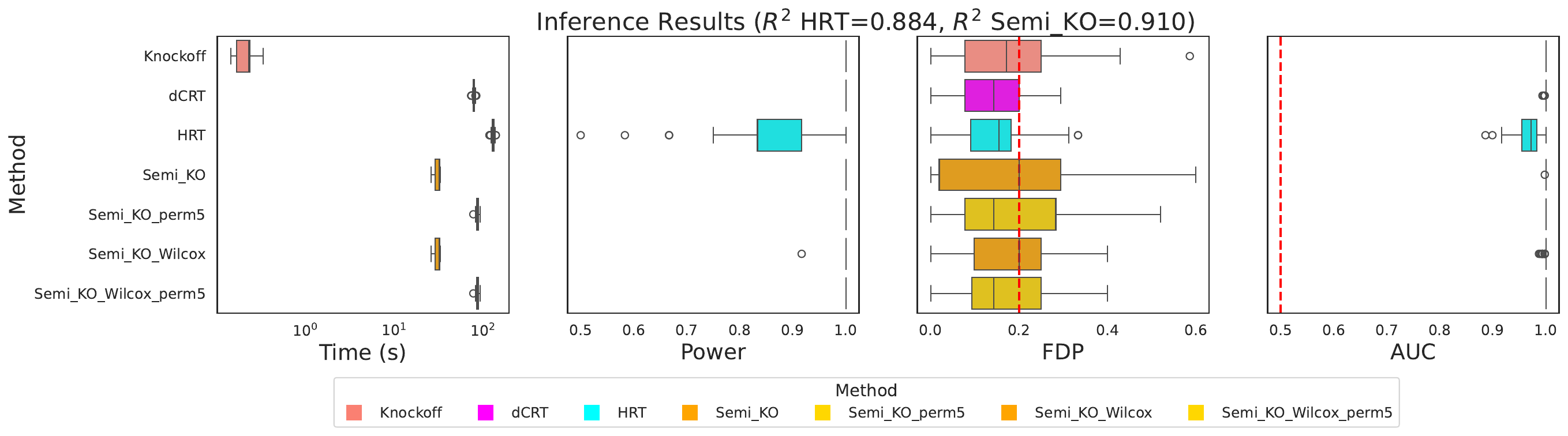}
    \caption{\textbf{FDR adjacent support with Gradient Boosting.} Let $X \sim \mathcal{N}(0,\Sigma)$ with $\Sigma^{ij} = 0.6^{|i-j|}$, and $y = \beta^\top X + \epsilon$, where the first $0.25p$ coordinates of $\beta$ are between $1$ and $2$ and the remaining entries are zero, and $\epsilon \sim \mathcal{N}(0,1)$. The black-box pretrained model $\widehat{m}$ is a gradient boosting, achieving $R^2 = 0.884$ with a train-test split and $R^2 = 0.910$ without split.}
    \label{fig:app_KO_adj_gb}
\end{figure}

\begin{figure}
    \centering
    \includegraphics[width=1.03\textwidth]{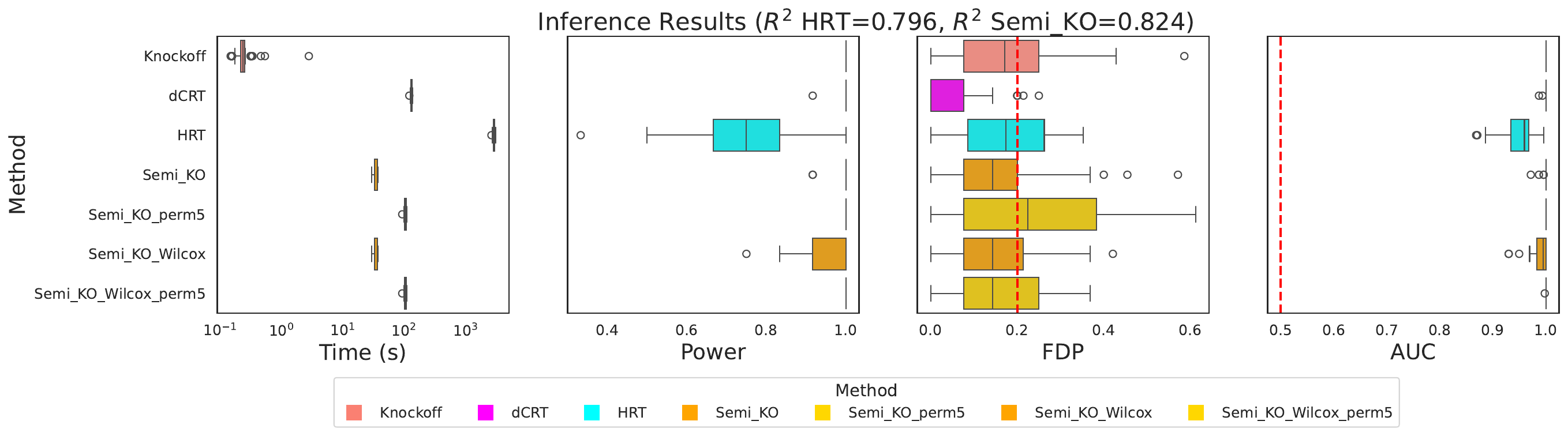}
    \caption{\textbf{FDR adjacent support with Random Forest.} Let $X \sim \mathcal{N}(0,\Sigma)$ with $\Sigma^{ij} = 0.6^{|i-j|}$, and $y = \beta^\top X + \epsilon$, where the first $0.25p$ coordinates of $\beta$ are between $1$ and $2$ and the remaining entries are zero, and $\epsilon \sim \mathcal{N}(0,1)$. The black-box pretrained model $\widehat{m}$ is a random forest, achieving $R^2 = 0.796$ with a train-test split and $R^2 = 0.824$ without split.}
    \label{fig:app_KO_adj_rf}
\end{figure}

\subsubsection{Heavy-tails}

In this setting, the input features are no longer Gaussian. As a result, there is a reduced predictive performance compared to the adjacent setting. For instance, the $R^2$ of the neural network decreases from $R^2=0.965$ (with splitting: $R^2=0.964$) to $R^2=0.828$ (with splitting: $R^2=0.798$), and for gradient boosting from $R^2=0.910$ (with splitting: $R^2=0.884$) to $R^2=0.737$ (with splitting: $R^2=0.688$).

Nevertheless, we still observe that HRT loses power due to the train–test split. Overall, it is preferable to use \texttt{Semi-knockoffs} for FDR control rather than applying BH to the \texttt{Semi-Knockoffs-Wilcoxon} p-values, and the derandomization procedure further increases power.
Note that in this linear setting, the knockoffs are powerful since they rely on the Lasso coefficient difference. However, Gaussian knockoff variables are no longer valid here, so the FDR is not theoretically controlled.

\begin{figure}
    \centering
    \includegraphics[width=1.03\textwidth]{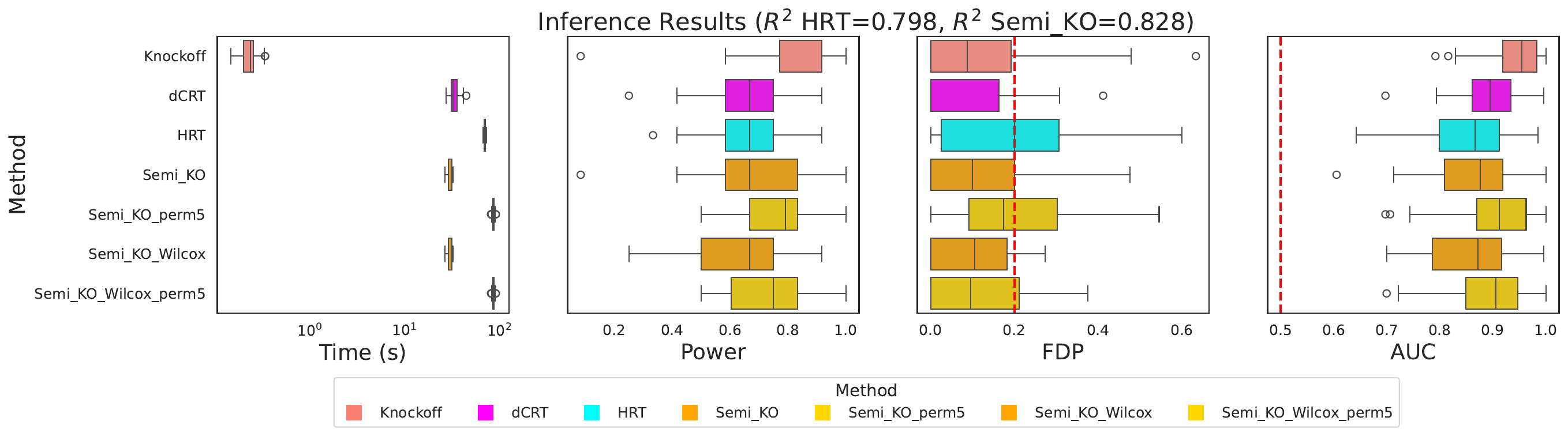}
    \caption{\textbf{FDR heavy-tailed with Neural Networks.} Let $X = \Sigma^{1/2} Z$, where $Z$ has i.i.d.\ t-Student entries $Z_{ij} \sim t_3$ and $\Sigma_{ij} = 0.6^{|i-j|}$.  We generate $y = \beta^\top X + \epsilon$, where the first $0.25p$ coordinates of $\beta$ lie in $[1,2]$ and the remaining entries are zero, with $\epsilon \sim \mathcal{N}(0,1)$. The black-box pretrained model $\widehat{m}$ is a neural network, achieving $R^2 = 0.798$ with a train-test split and $R^2 = 0.828$ without split.
}
    \label{fig:app_KO_nongaus_NN}
\end{figure}

\begin{figure}
    \centering
    \includegraphics[width=1.03\textwidth]{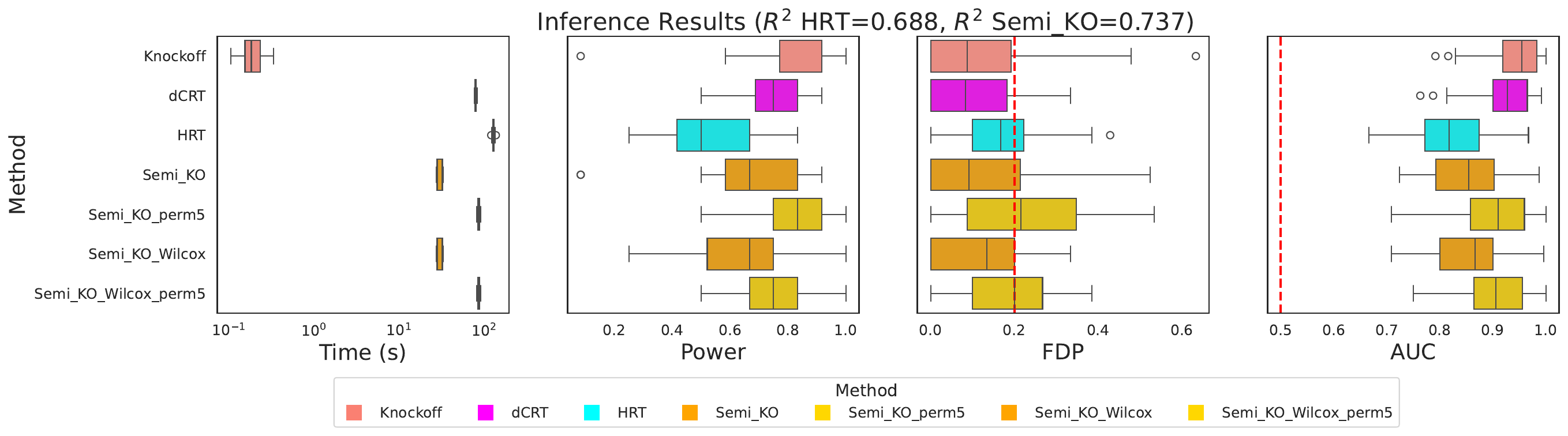}
    \caption{\textbf{FDR heavy-tailed with Gradient Boosting.} Let $X = \Sigma^{1/2} Z$, where $Z$ has i.i.d.\ t-Student entries $Z_{ij} \sim t_3$ and $\Sigma_{ij} = 0.6^{|i-j|}$.  We generate $y = \beta^\top X + \epsilon$, where the first $0.25p$ coordinates of $\beta$ lie in $[1,2]$ and the remaining entries are zero, with $\epsilon \sim \mathcal{N}(0,1)$. The black-box pretrained model $\widehat{m}$ is a gradient boosting, achieving $R^2 = 0.688$ with a train-test split and $R^2 = 0.737$ without split.
}
    \label{fig:app_KO_nongaus_GB}
\end{figure}

\begin{figure}
    \centering
    \includegraphics[width=1.03\textwidth]{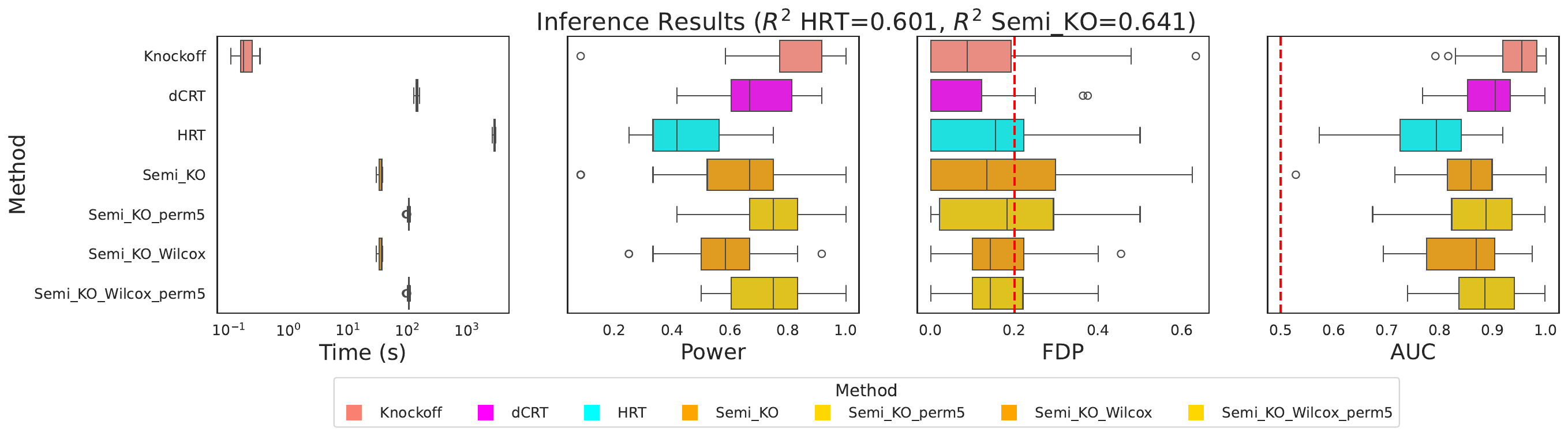}
    \caption{\textbf{FDR heavy-tailed with Random Forest.} Let $X = \Sigma^{1/2} Z$, where $Z$ has i.i.d.\ t-Student entries $Z_{ij} \sim t_3$ and $\Sigma_{ij} = 0.6^{|i-j|}$.  We generate $y = \beta^\top X + \epsilon$, where the first $0.25p$ coordinates of $\beta$ lie in $[1,2]$ and the remaining entries are zero, with $\epsilon \sim \mathcal{N}(0,1)$. The black-box pretrained model $\widehat{m}$ is a random forest, achieving $R^2 = 0.601$ with a train-test split and $R^2 = 0.641$ without split.
}
    \label{fig:app_KO_nongaus_RF}
\end{figure}

\subsubsection{High-dimension}\label{app:ko_fail}

In Figure \ref{fig:app_KO_hd}, we consider a linear high-dimensional setting with dimension $p = 400$ and sample size $n = 300$. Due to the high dimensionality, we use the lasso both for the learner and for the imputer. The main issue is that, in this regime, estimating the conditional sampler accurately becomes very difficult. As a consequence, the dCRT is unable to provide valid guarantees. Moreover, knockoff sampling, which is intrinsically more challenging than conditional sampling alone, is also inaccurate and knockoffs fails to control the FDR. Such misbehaviors of knockoffs in the presence of correlation and high dimensionality were already observed in \citet{blain2025knockoffsfaildiagnosingfixing}.

Nevertheless, we observe that Semi-knockoffs still provide valid guarantees, similarly to the HRT, which is also loss-based. This may be related to the double robustness result. As in the previous experiments, the knockoff threshold used in \texttt{SKO} performs better than the Wilcoxon test followed by BH correction used in \texttt{SKO-wcx}. We also observe a substantial gain in power when using derandomization.

\begin{figure}
    \centering
    \includegraphics[width=1.03\textwidth]{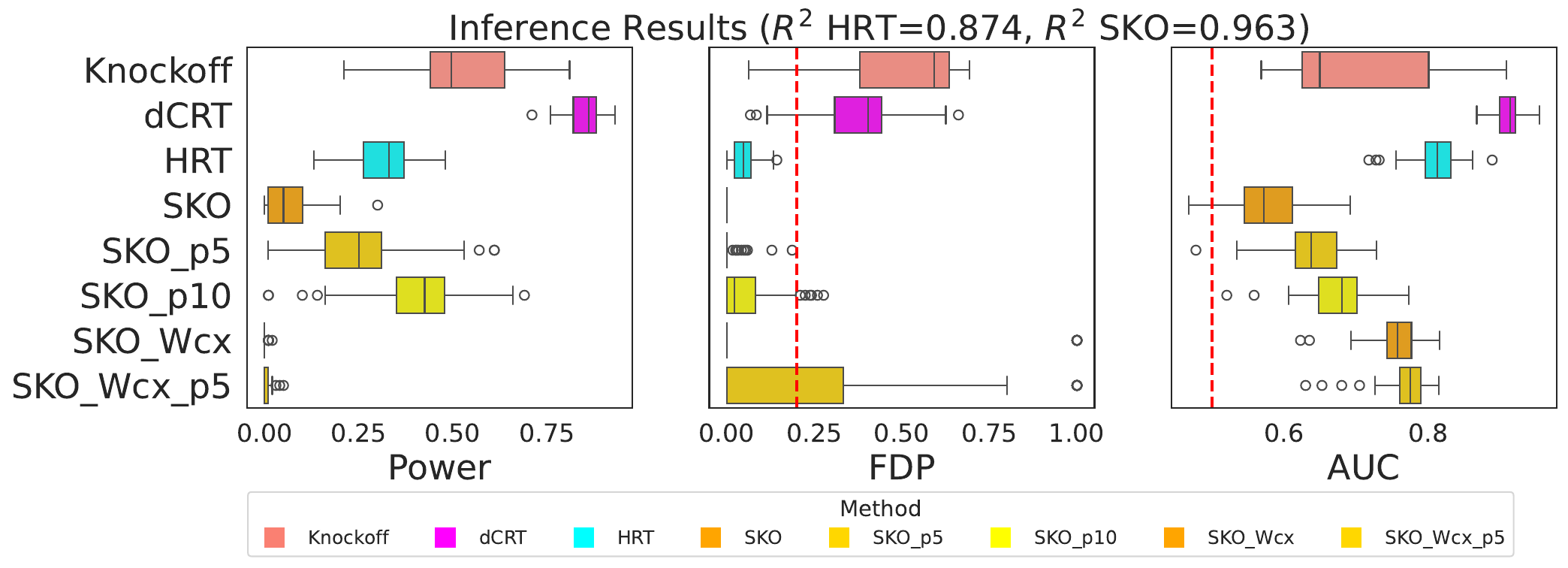}
    \caption{\textbf{FDR control in High-dimensional setting.} Let $X \sim \mathcal{N}(0,\Sigma)$ with $\Sigma^{ij} = 0.6^{|i-j|}$, and $y = \beta^\top X + \epsilon$, where $0.25p$ coordinates of $\beta$ randomly sparsely sampled from a standard Gaussian  and the remaining entries are zero, and $\epsilon \sim \mathcal{N}(0,1)$. The dimension is $p=400$ and we use $n=300$ samples. The black-box pretrained model $\widehat{m}$ is a lasso, achieving $R^2 = 0.874$ with a train-test split and $R^2 = 0.963$ without split.
}
    \label{fig:app_KO_hd}
\end{figure}





\subsection{Real data experiments}\label{app:wdbc}

The \emph{Wisconsin Diagnostic Breast Cancer} (WDBC) dataset consists of $n=569$ patient samples with $p=30$ numeric features of individual cells, obtained from a minimally invasive fine needle aspirate, to discriminate benign from malignant breast lumps. This dataset is widely used for binary classification tasks in medical informatics \citep{wdbc1995}.

Since the underlying distribution is unknown, we estimate the type-I error by adding an artificial null feature correlated at $0.6$ with the original inputs. We present results for computation time, number of discoveries, and type-I error using Random Forests, Neural Networks, and Gradient Boosting (Figures~\ref{fig:real_app_RF}, \ref{fig:real_app_NN}, and \ref{fig:real_app_GB}, respectively). The conclusions are consistent with those from the simulated settings. The additive term correction is overly restrictive in practice and yields no discoveries (\texttt{\_sqrt}). \texttt{LOCO-W} does not control the error. The \texttt{dCRT} makes no discoveries with Neural Networks or Gradient Boosting in the distillation step. Computationally, Semi-knockoffs should provide an advantage since no refitting of the complex model is required; however, in these simulations we used default implementations, so the benefit is less noticeable.

\begin{figure}
    \centering
    \includegraphics[width=0.8\textwidth]{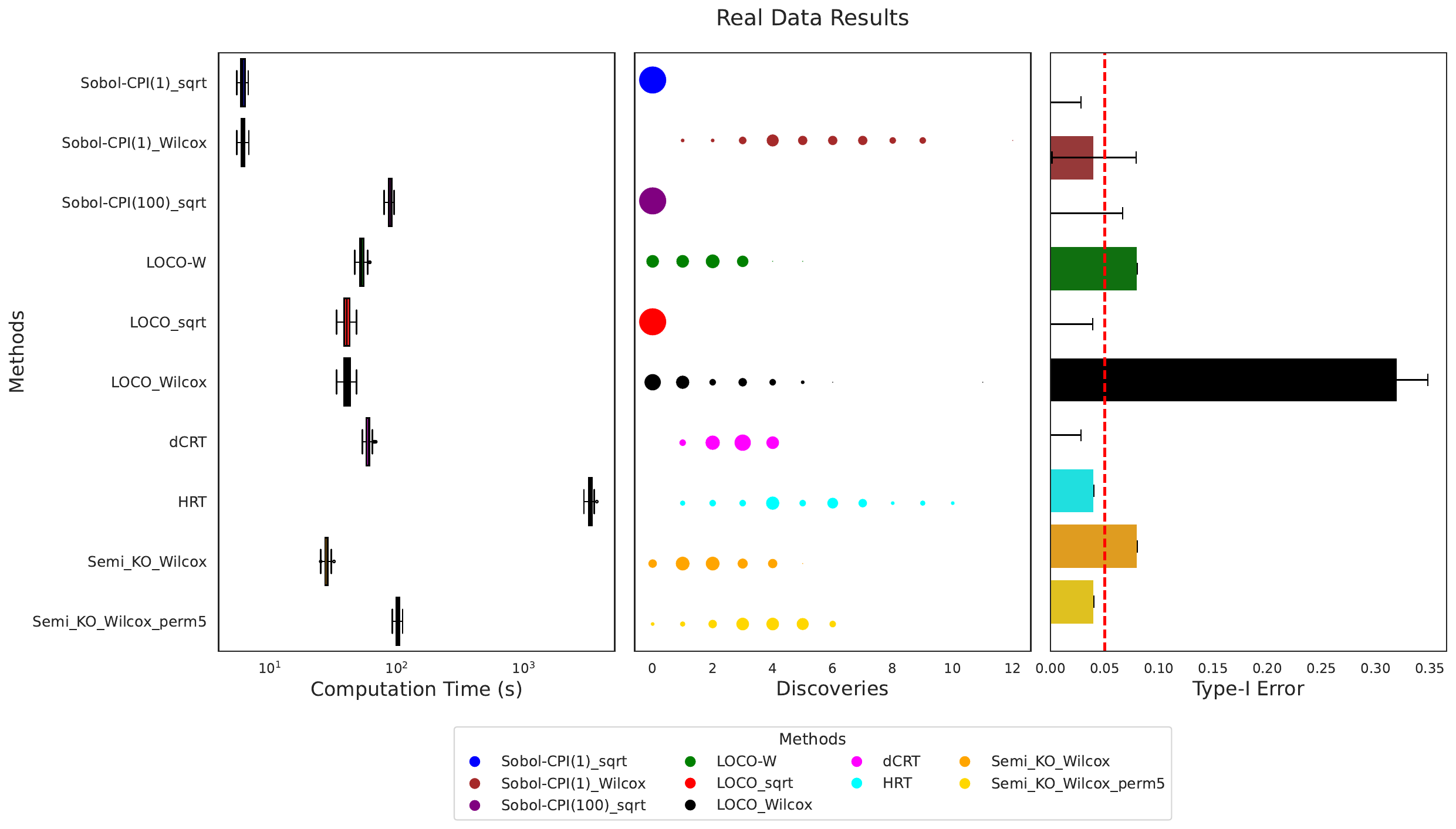}
    \caption{\textbf{Discoveries on real data using a Random Forest.} Left: computation time; middle: number of discoveries; right: type-I error, estimated using an artificial null feature correlated at $0.6$ with the original features. Semi-knockoffs makes discoveries while controlling the error.
}
    \label{fig:real_app_RF}
\end{figure}

\begin{figure}
    \centering
    \includegraphics[width=0.8\textwidth]{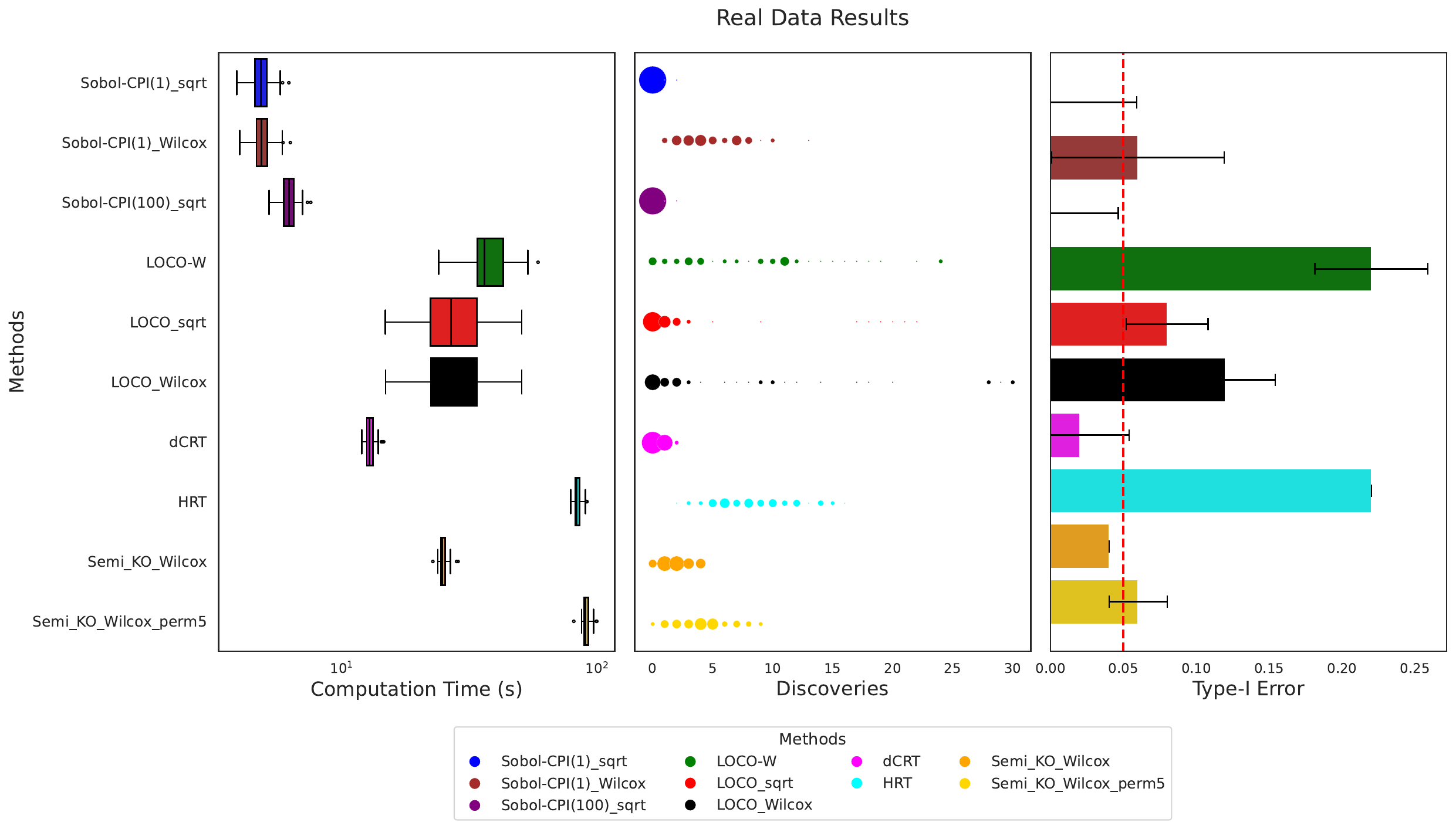}
    \caption{\textbf{Discoveries on real data using a Neural Network.} Left: computation time; middle: number of discoveries; right: type-I error, estimated using an artificial null feature correlated at $0.6$ with the original features. Semi-knockoffs makes discoveries while controlling the error.
}
    \label{fig:real_app_NN}
\end{figure}

\begin{figure}
    \centering
    \includegraphics[width=0.8\textwidth]{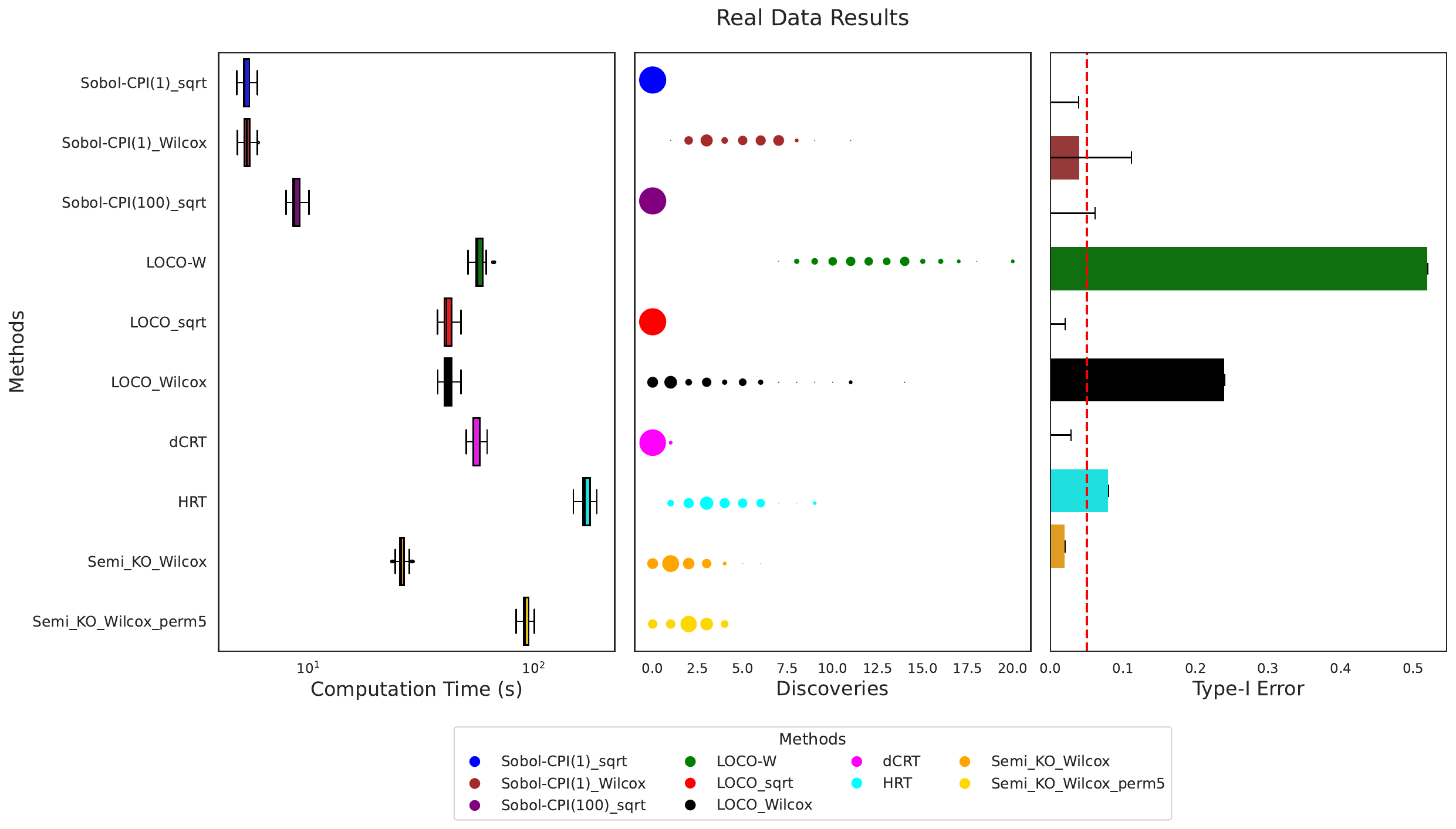}
    \caption{\textbf{Discoveries on real data using a Gradient Boosting.} Left: computation time; middle: number of discoveries; right: type-I error, estimated using an artificial null feature correlated at $0.6$ with the original features. Semi-knockoffs makes discoveries while controlling the error.
}
    \label{fig:real_app_GB}
\end{figure}

\end{document}